\title{Towards the classification of maximum scattered linear sets of $\mathrm{PG}(1,q^5)$}
\author{Stefano Lia, Giovanni Longobardi, Corrado Zanella}
\date{}
\newcommand{\cB}{{\mathcal B}}
\newcommand{\cC}{{\mathcal C}}
\newcommand{\cV}{{\mathcal V}}
\newcommand{\F}{{\mathbb F}}
\newcommand{\Fq}{{\F_q}}
\newcommand{\Fqc}{\F_{q^5}}
\newcommand{\Fqn}{\F_{q^n}}
\newcommand{\la}{\langle}
\newcommand{\ra}{\rangle}
\newtheorem{theorem}{Theorem}[section]
\newtheorem{lemma}[theorem]{Lemma}
\newtheorem{corollary}[theorem]{Corollary}
\newtheorem{proposition}[theorem]{Proposition}
\DeclareMathOperator{\tr}{Tr}
\DeclareMathOperator{\PG}{{PG}}
\DeclareMathOperator{\AG}{{AG}}
\DeclareMathOperator{\PGL}{{PGL}}
\DeclareMathOperator{\PGaL}{P\Gamma L}
\DeclareMathOperator{\GaL}{\Gamma L}
\DeclareMathOperator{\rk}{rk}
\DeclareMathOperator{\N}{N}
\newcommand{\sistlin}[1]{\left\{\begin{array}{ccl}#1\end{array}\right.}
\theoremstyle{definition}
\newtheorem{definition}[theorem]{Definition}
\newtheorem{remark}[theorem]{Remark}
\begin{document}

\maketitle

\begin{abstract}
Every maximum scattered linear set in $\PG(1,q^5)$ is the projection of an
$\Fq$-subgeometry $\Sigma$ of $\PG(4,q^5)$ from a plane $\Gamma$ external
to the secant variety to $\Sigma$ \cite{LuPo2004}.
The pair $(\Gamma,\Sigma)$ will be called a projecting configuration for the linear set.
The projecting configurations for the only known maximum scattered linear sets in $\PG(1,q^5)$, 
namely those of pseudoregulus and LP type, 
have been characterized in the literature \cite{CsZa20162,ZZ}.
Let $(\Gamma,\Sigma)$ be a projecting configuration for a maximum scattered linear set in $\PG(1,q^5)$.
Let $\sigma$ be a generator of $\mathbb G=\PGaL(5,q^5)_\Sigma$, and $A=\Gamma\cap\Gamma^{\sigma^4}$,
$B=\Gamma\cap\Gamma^{\sigma^3}$.
If $A$ and $B$ are not both points, then the projected linear set is of pseudoregulus type \cite{CsZa20162}. 
Suppose that they are points.
The rank of a point $X$ is the vectorial dimension of the span of the orbit of $X$ under the action of $\mathbb G$.
In this paper,
by investigating the geometric properties of projecting configurations, it is
proved that if at least one of the points $A$ and $B$ has rank 5, the associated maximum scattered linear set must be of LP type.
Then, if a maximum scattered linear set of a new type exists, it must be such that $\rk A=\rk B=4$. In this paper we derive two possible polynomial forms that such a linear set must have. An exhaustive analysis by computer shows that for $q\le25$ no new maximum scattered linear set exists.
\end{abstract}

\noindent
\textbf{MSC:} 51E20, 05B25

\medskip
\noindent \textbf{Keywords:} finite projective space, linear set, linearized polynomial, scattered polynomial

\section{Introduction}
\subsection{Linear sets in finite projective spaces}
Let $\PG(d,q^n)=\{\la v\ra_{\Fqn}\colon v\in\F_{q^n}^{d+1},\,v\neq0\}$
be the $d$-dimensional projective space over $\Fq$.
If $U$ is an $r$-dimensional $\Fq$-subspace of $\F_{q^n}^{d+1}$, then
\[ L_U=\{\la v\ra_{\Fqn}\colon v\in U,\,v\neq0\}\subseteq\PG(d,q^n) \]
is called \emph{$\Fq$-linear set} of rank $r$.
Despite the simple definition, linear sets are very rich structures, connected
to interesting objects, such as blocking sets, two-intersection sets, complete caps, 
translation spreads of the Cayley Generalized Hexagon, translation ovoids of polar
spaces, semifield flocks, finite semifields and rank metric codes, see for example \cite{LavrauwVanderVoorde, Po2010, PoZu2020} and their references.\\
The linear set $L_U$ is 
\emph{scattered} if $\dim_{\Fq}\left(\la v\ra_{\Fqn}\cap U\right)\le1$ for any 
$v\in\F_{q^n}^{d+1}$ or, equivalently, if it has size $(q^r-1)/(q-1)$.
A nontrivial scattered $\Fq$-linear set $L_U$ of $\PG(d,q^n)$ 
of highest possible rank is a \emph{maximum scattered $\Fq$-linear set}
(MSLS for short); in this case $U$ is called a \emph{maximum scattered subspace} of 
$\F_{q^n}^{d+1}$. 
The rank $r$ of an MSLS depends on $d$ and $n$. 
It holds
$ r\le\frac{(d+1)n}2 $, see \cite{BlLa2000}, with equality when $(d+1)n$ is even
\cite{BaGiMaPo2018,BlLa2000,CsMaPoZu2017}. Two $\F_q$-linear sets are called \textit{equivalent} (resp.\ \textit{projectively equivalent}) if  there exists a collineation (resp.\ a \textit{projectivity}) of $\PG(d,q^n)$ that maps one onto the other.

The maximum scattered $\Fq$-linear sets in $\PG(1,q^n)$ can be effectively described by 
means  of linearized polynomials.
In fact, every linear set in $\PG(1,q^n)$ of rank $n$ is projectively equivalent 
to
\[  L_f=\{{\la(x,f(x))\ra_{\Fqn}}\colon x\in\F_{q^n}^*\}, \]
where $f(x)=\sum_{i=0}^{n-1}a_ix^{q^i}$ is some $\Fq$-linearized polynomial.
This $L_f$ is a scattered linear set if and only if each ratio $f(x)/x$ occurs for
at most $q-1$ distinct non-zero values of $x \in \F_{q^n}^*$; in other words, if and only if for all 
$y,z\in\F_{q^n}^*$
\begin{equation}\label{e:sp}
\frac{f(y)}y=\frac{f(z)}z\ \Longrightarrow\ \frac yz\in \Fq.
\end{equation}
Any linearized polynomial fulfilling \eqref{e:sp} is called 
\emph{scattered polynomial}.
There are only four known families of scattered polynomials which are defined for
infinitely many values of $n$ \cite{inf5,inf1,inf2,inf3,inf4}.
Of these families, only the following two occur in $\PG(1,q^5)$, the focus of this work:
\begin{enumerate}[$(i)$]
\item $x^{q^s}$, $(s,n)=1$ that is called  scattered polynomial of 
\emph{pseudoregulus type} \cite{BlLa2000};
\item $x^{q^s}+\delta x^{q^{n-s}}$, $n>3$, $(s,n)=1$, $\N_{q^n/q}(\delta)\neq0,1$ that is called 
scattered 
polynomial of \emph{LP type} from the names of Lunardon and Polverino
who introduced it, \cite{LuPo2001,Sh2016}. 
\end{enumerate}
 The corresponding linear sets are called linear sets of \emph{pseudoregulus} and \emph{LP type}, respectively. Further details on scattered polynomials can be found in \cite{long}.

\subsection{Projecting configurations}
A \emph{canonical $\Fq$-subgeometry} of $\PG(d,q^n)$ is a set projectively equivalent 
to the linear set of rank $d+1$ associated with the $\F_q$-subspace $\F_{q}^{d+1}$ (the set of points with coordinates rational over $\Fq$).
The relevance of subgeometries lies in the following result. 
\begin{theorem}\cite{LuPo2004}\label{t:lupo}
If $L_U$ is an $\Fq$-linear set of rank $r$ in $\Lambda=\PG(d,q^n)$
such that $\la L_U\ra=\Lambda$, then there exists
a projective space $\PG(r-1,q^n)\supseteq \Lambda$, an $\Fq$-canonical subgeometry $\Sigma$
of $\PG(r-1,q^n)$, and a complement $\Gamma$ of $\Lambda$ in $\PG(r-1,q^n)$, such that $\Gamma\cap\Sigma=\emptyset$, and $L_U$ is the projection of $\Sigma$
from the vertex $\Gamma$ onto $\Lambda$. 
Conversely, any such projection is a linear set.
\end{theorem}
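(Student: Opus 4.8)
The plan is to build the enveloping space by ``tensoring up'' the $\Fq$-structure $U$. Since $\la L_U\ra=\Lambda$, the $\Fq$-subspace $U$ of $V:=\Fqn^{\,d+1}$ spans $V$ over $\Fqn$; fix an $\Fq$-basis $u_1,\dots,u_r$ of $U$. Let $\hat V=\Fqn^{\,r}$ with standard basis $e_1,\dots,e_r$, set $\PG(r-1,q^n):=\PG(\hat V)$, and let $\Sigma=\{\la(a_1,\dots,a_r)\ra_{\Fqn}\colon a_i\in\Fq,\ (a_i)\neq0\}$ be the canonical $\Fq$-subgeometry determined by this basis. Define the $\Fqn$-linear map $\phi\colon\hat V\to V$ by $\phi(e_i)=u_i$, that is $\phi(a_1,\dots,a_r)=\sum_i a_iu_i$. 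Because the $u_i$ span $V$ over $\Fqn$, the map $\phi$ is surjective, so $K:=\ker\phi$ has $\Fqn$-dimension $r-d-1$; put $\Gamma:=\PG(K)$, a subspace of projective dimension $r-d-2$.

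Next I would verify the three required properties. Disjointness $\Gamma\cap\Sigma=\emptyset$ is exactly the $\Fq$-independence of $u_1,\dots,u_r$: a point $\la(a_1,\dots,a_r)\ra$ of $\Sigma$ lies in $\Gamma$ iff $\sum_i a_iu_i=0$ with all $a_i\in\Fq$, which forces $(a_i)=0$. To realize $\Lambda$ as a complement of $\Gamma$, choose an $\Fqn$-subspace $V'\subseteq\hat V$ with $\hat V=V'\oplus K$; then $\phi|_{V'}\colon V'\to V$ is an isomorphism and $\PG(V')$ is a complement of $\Gamma$ in $\PG(\hat V)$ which we identify with $\Lambda$ through $\phi|_{V'}$. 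Finally, the projection $p_\Gamma$ from $\Gamma$ onto $\PG(V')$ sends a point $\la w\ra$ with $w\notin K$ to $\la\phi(w)\ra$ under this identification; as $(a_1,\dots,a_r)$ runs over $\Fq^{\,r}\setminus\{0\}$, the image $\la\phi(a_1,\dots,a_r)\ra=\la\sum_i a_iu_i\ra$ runs over exactly $\{\la u\ra\colon u\in U\setminus\{0\}\}=L_U$, so $p_\Gamma(\Sigma)=L_U$.

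For the converse, suppose $\Sigma'$ is a canonical $\Fq$-subgeometry of $\PG(r-1,q^n)=\PG(\hat V)$ and $\Gamma=\PG(K)$ is a vertex with $\Gamma\cap\Sigma'=\emptyset$, projecting onto a complement $\Lambda=\PG(V')$. Writing $\Sigma'=\{\la w\ra\colon w\in W_0\setminus\{0\}\}$ for the defining $\Fq$-subspace $W_0$ (of $\Fq$-dimension $r$, spanning $\hat V$ over $\Fqn$), and letting $\rho\colon\hat V\to V'$ be the $\Fqn$-linear projection with kernel $K$, disjointness gives $W_0\cap K=\{0\}$, so $\rho$ is injective on $W_0$ and the image of $\Sigma'$ equals $\{\la\rho(w)\ra\colon w\in W_0\setminus\{0\}\}=L_{\rho(W_0)}$, an $\Fq$-linear set.

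The genuinely delicate points are bookkeeping rather than deep. First, the statement asks for a $\PG(r-1,q^n)$ literally containing the given $\Lambda$, whereas the construction produces an abstract enveloping space together with a subspace $\PG(V')\cong\Lambda$; one must argue that the ambient $\PG(d,q^n)$ may be taken to be $\PG(V')$ up to a collineation and that $L_U$ is carried to itself under $\phi|_{V'}$. Second, one must confirm that $\Sigma$ is a canonical subgeometry in the precise sense used here, namely that an $\Fq$-basis of its defining $\Fq$-subspace is simultaneously an $\Fqn$-basis of $\hat V$; in this construction it holds by design, and it is the very same condition that delivers $\Gamma\cap\Sigma=\emptyset$. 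I expect these compatibility and identification checks, rather than any single computation, to be the main thing to get right.
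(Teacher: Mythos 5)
The paper gives no proof of this statement; it is quoted verbatim from \cite{LuPo2004}, so there is nothing internal to compare against. Your argument is the standard Lunardon--Polverino construction (a surjective $\Fqn$-linear map $\phi\colon\Fqn^r\to\Fqn^{d+1}$ carrying the canonical $\Fq$-structure onto an $\Fq$-basis of $U$, with vertex $\Gamma=\PG(\ker\phi)$), and both directions are correct: disjointness is exactly $\Fq$-independence of the $u_i$, and the converse is the injectivity of the projection on the defining $\Fq$-subspace of $\Sigma'$. The bookkeeping you flag is genuinely routine: to get $\Lambda$ literally inside the enveloping space, replace $\hat V$ by $V\oplus\ker\phi$ and transport $\Sigma$ and $\Gamma$ through the collineation induced by $(v,k)\mapsto(\phi|_{V'})^{-1}(v)+k$; this preserves canonicity of $\Sigma$ because canonical subgeometries are by definition closed under projectivities, and it carries the projection of $\Sigma$ onto $\PG(V')$ to $L_U\subseteq\Lambda$.
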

In this paper any such pair $(\Gamma,\Sigma)$ is called \emph{projecting configuration} for the linear set $L_U$.  
The properties of $L_U$ can be described in terms of such projecting configuration,
the subspace $\Lambda$ being immaterial.
The corresponding projection map is denoted by $\wp_\Gamma$.
In \cite{CsZa20161}, it was proved that 
for some linear sets the projecting configuration is not unique up to collineations.
This issue is further formalized in \cite{CMP}, where the definition of
\emph{$\GaL$-class}
of a linear set $L_U$ is seen from the perspective of projecting configurations.
For a linear set $L_U$ of $\GaL$-class one the projecting configuration is 
unique up to the action of $\GaL(r,q^n)$; in this case $L_U$ is called \emph{simple}.
It has been recently proved in \cite{Giovultimo} that, for any $n \geq 2$, the $\GaL$-class of an MSLS in $\PG(1,q^n)$ not of pseudoregulus type is at most two.\\
According to Theorem~\ref{t:lupo},
the projecting configuration for a {maximum} linear set $L_U$ in $\PG(1,q^n)$, 
i.e.\ a  linear set of rank $n$, is contained in $\PG(n-1,q^n)$, and it has as a vertex 
an $(n-3)$-dimensional projective subspace $\Gamma$ projecting a subgeometry $\Sigma$, 
$\Sigma\cap\Gamma=\emptyset$, on a line $\ell$ such that $\Gamma\cap\ell=\emptyset$.\\
In this paper, the focus is on the mutual position of $\Gamma$ and $\Sigma$; in 
particular, on the behavior of
the vertex $\Gamma$ under the action of the pointwise stabilizer 
$\mathbb G=\PGaL(n,q^n)_\Sigma$. This is a cyclic group of order $n$.

Assume that $(\Gamma,\Sigma)$ is a projecting configuration in $\PG(n-1,q^n)$ for a linear set $L_U$.
Let $\sigma$ be a generator of $\mathbb G=\PGaL(n,q^n)_\Sigma$.
The \emph{rank} of a point $P\in\PG(n-1,q^n)$ with respect to 
the $\Fq$-subgeometry $\Sigma$ is
	\[ \rk P=1+\dim\la P,P^{\sigma},P^{\sigma^2},\ldots,P^{\sigma^{n-1}}\ra, \]
where, from this point on, the angle brackets $\la\ \ra$ without a field as an index will be used to
denote the projective closure of a point set in a projective space.
Equivalently, $\rk P$ is the minimum size of a subset $\mathcal I\subseteq\Sigma$ 
such that $P\in\la\mathcal I\ra$.
The following result holds.
\begin{theorem} \label{th:scatteredness}
Let  $\Sigma$ be an $\F_q$-canonical subgeometry of $\PG(n-1,q^n)$ and let $\Gamma$ and $\Lambda$ be an $(n-3)$-dimensional projective space and a line, respectively such that
$\Gamma \cap \Sigma = \emptyset = \Gamma \cap \Lambda$. Let consider the projection map of $\Sigma$ from $\Gamma$ into $\Lambda$:
\begin{equation}\label{proj}
\wp_\Gamma: P \in \Sigma \longrightarrow \langle P, \Gamma \rangle  \cap \Lambda  \in \Lambda.
\end{equation}
Then following assertions are equivalent:
\begin{itemize}
\item [$(i)$] $\wp_\Gamma(\Sigma)$ is scattered;
\item [$(ii)$] the restriction of $\wp_\Gamma$ to $\Sigma$ is injective;
\item [$(iii)$] every point of $\Gamma$ has rank greater than two.
\end{itemize}
\end{theorem}

In \cite{CsZa20162,CsZa2018,Giovultimo, LavrauwVanderVoorde, MZ2019, ZZ}, several results on the characterization of known maximum scattered linear sets of the projective line have been achieved. These results yield a classification of MSLSs of 
$\PG(1,q^n)$ for $n \leq 4$. In what follows, we collect some of these and outline the current state of the art.\\
Firstly, since all maximum scattered $\mathbb{F}_2$-linear sets of $\PG(1,2^n)$ have size $2^n-1$ and  are equivalent, they are of pseudoregulus type, see also \cite{payne_complete_1971}.\\
It is also straightforward to see that any maximum scattered $\mathbb{F}_q$-linear set of $\PG(1,q^2)$ is a Baer subline, i.e., a (canonical) subgeometry of $\PG(1,q^2)$. \\

In \cite{CsZa20162}, a characterization result for linear sets of pseudoregulus type, obtained via their projecting configurations, is presented.

\begin{theorem}\cite{CsZa20162}\label{t:CsZa20162}
Assume that $\wp_\Gamma(\Sigma)$ is scattered.
Then the following assertions are equivalent:
\begin{itemize}
    \item[$(i)$] $\wp_\Gamma(\Sigma)$ is of pseudoregulus type;
    \item[$(ii)$] $\dim(\Gamma \cap \Gamma^\sigma) = n-4$ for some generator $\sigma$ of $\mathbb{G}$;
    \item[$(iii)$] there exists a point $P$ and a generator $\sigma$ of $\mathbb{G}$ such that $\operatorname{rk} P = n$ and
    \[
        \Gamma = \langle P, P^\sigma, \ldots, P^{\sigma^{n-3}} \rangle.
    \]
\end{itemize}
\end{theorem}

Since a maximum $\mathbb{F}_q$-linear set of $\PG(1,q^3)$ has a rank-three point of $\PG(2,q^3)$ as its projecting vertex, Condition~$(ii)$ of the theorem above is trivially fulfilled. Hence, we obtain the following.

\begin{theorem}
Any maximum scattered $\mathbb{F}_q$-linear set of $\PG(1,q^3)$ is of pseudoregulus type.
\end{theorem}

This result had already been obtained using properties of the stabilizer of a subgeometry $\Sigma$ of $\PG(2, q^3)$; see \cite[Example~5.1, Section~5.2 and Remark~5.6]{CMP} and \cite{LaVan}. \\
Since the projecting configuration of an $\mathbb{F}_q$-linear set of rank $4$ in $\PG(1,q^4)$ is unique up to collineations (\cite[Theorem~4.5]{CMP}), Csajb\'{o}k and Zanella proved in \cite{CsZa2018} the following classification result.

\begin{theorem}\cite{CsZa2018}
The only maximum scattered linear sets in $\PG(1,q^4)$ are those of pseudoregulus type or of LP type.
\end{theorem}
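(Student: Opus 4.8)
The plan is to run, in the case $n=4$, the dichotomy furnished by Theorems~\ref{t:CsZa20162} and~\ref{t:ZZ}. By Theorem~\ref{t:lupo} a projecting configuration for a maximum scattered linear set of $\PG(1,q^4)$ (rank $4$) consists of a \emph{line} $\Gamma$ and a canonical $\Fq$-subgeometry $\Sigma\cong\PG(3,q)$ of $\PG(3,q^4)$, with $\Gamma\cap\Sigma=\emptyset$; here $\mathbb G=\PGaL(4,q^4)_\Sigma=\la\sigma\ra$ is cyclic of order $4$, generated by the Frobenius collineation, which fixes $\Sigma$ pointwise. First I would record two preliminary facts. If $\Gamma^\sigma=\Gamma$, then the $\Fq$-subspace underlying $\Gamma$ is $\sigma$-invariant, hence defined over $\Fq$, so $\Gamma$ would meet $\Sigma$ in a subline $\PG(1,q)$, contradicting $\Gamma\cap\Sigma=\emptyset$; thus $\Gamma\neq\Gamma^\sigma$ and $\dim(\Gamma\cap\Gamma^\sigma)\in\{-1,0\}$. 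Moreover, since $\sigma$ is a collineation, $\dim(\Gamma\cap\Gamma^{\sigma^3})=\dim(\Gamma^\sigma\cap\Gamma)=\dim(\Gamma\cap\Gamma^\sigma)$, so both generators of $\mathbb G$ give the same value and the criterion of Theorem~\ref{t:CsZa20162} is unambiguous.

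If $\dim(\Gamma\cap\Gamma^\sigma)=0=n-4$, then Theorem~\ref{t:CsZa20162} immediately yields pseudoregulus type and we are done. So assume instead that $\Gamma\cap\Gamma^\sigma=\emptyset$, i.e. $\Gamma$ and $\Gamma^\sigma$ are skew and span $\PG(3,q^4)$; I claim this forces LP type through Theorem~\ref{t:ZZ} (the excluded small value $q=2$ being settled by direct inspection). Condition $(i)$ of that theorem reads $\dim(\Gamma\cap\Gamma^\sigma\cap\Gamma^{\sigma^2})>n-7=-3$, which holds trivially as the intersection has dimension at least $-1$. Hence the entire problem collapses to verifying condition $(ii)$: I must exhibit a point $P\in\Gamma$, with $\Gamma=\la P,Q\ra$, for which the line $\la P^\sigma,P^{\sigma^3}\ra$ meets $\Gamma$.

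Next I would turn $(ii)$ into an equation. For every $P\in\Gamma$, scatteredness forbids $\rk P\le 2$, so $P\neq P^{\sigma^2}$, whence $P^\sigma\neq P^{\sigma^3}$ and $\ell_P:=\la P^\sigma,P^{\sigma^3}\ra$ is a genuine line; since $P^\sigma\in\Gamma^\sigma$ is disjoint from $\Gamma$ we also have $\ell_P\neq\Gamma$, so $\ell_P\cap\Gamma$ is either empty or a single point. Writing $\Gamma=\la u,v\ra$ and $P=su+tv$, the lines $\ell_P$ and $\Gamma$ meet exactly when $\det(P^\sigma,P^{\sigma^3},u,v)=0$; expanding multilinearly this becomes the homogeneous condition
\[ A\,s^{q+q^3}+B\,s^{q}t^{q^3}+C\,s^{q^3}t^{q}+D\,t^{q+q^3}=0, \]
with $A,B,C,D\in\Fqq$ depending only on $u,v$. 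Dehomogenising by $w=(s/t)^{q}$, so that $w^{q^2}=(s/t)^{q^3}$, this is equivalent to the norm-type (``Hermitian'') equation $A\,\N_{q^4/q^2}(w)+B\,w+C\,w^{q^2}+D=0$, that is $w^{q^2}(Aw+C)=-(Bw+D)$. Thus condition $(ii)$ holds if and only if this last equation admits a solution $w\in\Fqq$.

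The crux, and the step I expect to be the main obstacle, is precisely the solvability of this equation. When $A\neq0$ it rewrites as a fixed-point problem for the $\sigma^2$-twisted fractional map $w\mapsto-(Bw+D)/(Aw+C)$; composing it with its $\sigma^2$-conjugate produces an ordinary M\"obius transformation of $\PG(1,q^4)$ whose fixed points feed the original equation, and one is led to analyse its discriminant and the resulting solution count. The decisive input is that the skewness hypothesis, equivalent to $\det(u,v,u^\sigma,v^\sigma)\neq0$, together with the constraints imposed by $\rk P\ge 3$ for all $P\in\Gamma$, pins down $A,B,C,D$ so as to rule out the empty solution set: geometrically, emptiness would force $\dim(\Gamma\cap\Gamma^\sigma)=0$ (pseudoregulus type) or the existence of a point of rank $\le 2$ (non-scattered projection), both excluded. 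The degenerate subcases $A=0$ and $AD=BC$ are disposed of directly and lead back to the same alternatives. Once a solution $w$ is produced, the associated $P$ satisfies $(ii)$, so Theorem~\ref{t:ZZ} gives LP type, and its closing clause guarantees that this $P$ has rank $4$. I expect this discriminant/solvability analysis, with the bookkeeping of the degenerate cases, to be the only genuine difficulty; the rest of the argument is formal.
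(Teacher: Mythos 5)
This statement is not proved anywhere in the paper: it is an external citation to \cite{CsZa2018}, where the classification is obtained by a direct algebraic analysis of the coefficients of a scattered linearized polynomial over $\F_{q^4}$, not by studying projecting configurations. So your proposal must stand on its own, and as it stands it has a genuine gap at exactly the step you yourself flag as ``the crux''.

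Everything before that step is sound: the dichotomy $\dim(\Gamma\cap\Gamma^\sigma)\in\{-1,0\}$, the pseudoregulus case via Theorem~\ref{t:CsZa20162}, the trivial verification of condition $(i)$ of Theorem~\ref{t:ZZ} for $n=4$, and the translation of condition $(ii)$ into the solvability over $\F_{q^4}$ of
\[
A\,w^{q^2+1}+B\,w+C\,w^{q^2}+D=0,\qquad
A=\det(u^\sigma,u^{\sigma^3},u,v),\ \ldots
\]
are all correct. What is missing is the proof that this equation always has a solution when $\Gamma$ and $\Gamma^\sigma$ are skew and every point of $\Gamma$ has rank at least three. You assert that ``emptiness would force $\dim(\Gamma\cap\Gamma^\sigma)=0$ or the existence of a point of rank $\le 2$'', but no argument is given for this implication, and none is apparent: equations of this shape can genuinely be unsolvable over $\F_{q^4}$ (already $w^{q^2}=w+c$ with $\tr_{q^4/q^2}(c)\neq0$, which is the subcase $A=0$, $B=-C$ you claim is ``disposed of directly''), so solvability must be extracted from the specific constraints that scatteredness and skewness impose on $A,B,C,D$ --- and doing that is essentially the entire content of the theorem. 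The sketched strategy (compose the twisted fractional map $w\mapsto-(Bw+D)/(Aw+C)$ with its $q^2$-conjugate and analyse fixed points of the resulting M\"obius map $\psi$) is the standard first move, but fixed points of $\psi$ split into genuine solutions of $w^{q^2}=\phi(w)$ and spurious ones, and nothing in the proposal controls this splitting. Until that analysis is actually carried out (together with the deferred cases $q=2$ and $t=0$), your argument proves only the easy half of the classification and restates the hard half as an unproven solvability claim. If it were completed, it would constitute a genuinely different proof from \cite{CsZa2018} --- a configuration-theoretic one resting on the later results \cite{ZZ,Giovultimo} --- but in its present form it is not a proof.
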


In the same spirit of Theorem \ref{t:CsZa20162}, in \cite{ZZ} the authors provided a characterization of LP type linear sets. Their result, which will be crucial for this article, is the following.

\begin{theorem}\cite{Giovultimo,ZZ}\label{t:ZZ}
Let $n \geq 4$ and $q>2$.
Assume that $\wp_\Gamma(\Sigma)$ is scattered and not of pseudoregulus type.
Then $\wp_\Gamma(\Sigma)$ is of LP type if and only if there exists a generator $\sigma$ of $\mathbb{G}$ such that:
\begin{enumerate}[$(i)$]
    \item $\dim(\Gamma \cap \Gamma^\sigma \cap \Gamma^{\sigma^2}) > n-7$;
    \item there exist points $P$ and $Q$ such that
    \[
        \Gamma = \langle P, P^\sigma, \ldots, P^{\sigma^{n-4}}, Q \rangle,
    \]
    and the line $\langle P^{\sigma^{n-3}}, P^{\sigma^{n-1}} \rangle$ meets $\Gamma$.
\end{enumerate}
Moreover, if $\wp_\Gamma(\Sigma)$ is of LP type, then the point $P$ is unique and has rank $n$.
\end{theorem}

\subsection{Outline of the paper}
In this paper,  we study the case $n=5$.
The projecting configuration $(\Gamma,\Sigma)$ consists
of a {plane $\Gamma$} and an $\Fq$-subgeometry {$\Sigma\cong\PG(4,q)$} 
in $\PG(4,q^5)$.
In \cite{DBVV2022}, the projecting configurations for {non}-scattered linear sets
are investigated.
Section~\ref{s:geomprop} deals with the geometric properties of a projecting configuration
$(\Gamma,\Sigma)$ for an MSLS in $\PG(1,q^5)$.\\
For any point $X\in\PG(4,q^5)$, define $X_i=X^{\sigma^i}$, $i \in \mathbb{N}$. 
If $\wp_\Gamma(\Sigma)\subseteq\PG(1,q^5)$ is an MSLS not of pseudoregulus type,
and $\sigma$ is a generator of $\mathbb G$, then 
\begin{itemize}
\item[$(i)$] the intersection points of $\Gamma$ with $\Gamma^{\sigma^4}$,
$\Gamma^{\sigma}$, $\Gamma^{\sigma^3}$, $\Gamma^{\sigma^2}$ are of type
$A$, $A_1$, $B$, $B_2$ (cf.\ Lemma~\ref{l:AB}), respectively; 
\item[$(ii)$] no three of them are collinear (Proposition~\ref{frame});
\item[$(iii)$] $\rk A\ge4$, $\rk B\ge4$ (Proposition~\ref{linearlyindepedent});
\item[$(iv)$] $\wp_\Gamma(\Sigma)$ is of LP type iff 
$\la A_2,A_4\ra\cap\Gamma\neq\emptyset$ and $\rk A=5$, 
or $\la B_3,B_4\ra\cap\Gamma\neq\emptyset$ and $\rk B=5$ (Theorems~\ref{t:ZZ} and \ref{LPlines}).
If $\rk A=\rk B=4$, then we have a scattered LS of a new type, i.e. not projectively equivalent to the known ones.
\end{itemize}
In Section~\ref{s:equations} algebraic relations between representative vectors for
the points $A_i$ and $B_j$ are proved.
Taking in $\PG(4,q^5)$ coordinates such that
some $A_i$s and $B_j$s are base points, we derive algebraic conditions characterizing the MSLSs of LP 
type.

The investigation is strongly influenced by the rank of the points $A$ and $B$.
Thus, in Section~\ref{s:class} the case of $\max\{\rk A,\rk B\}=5$ is studied.

This work builds on the results in \cite{MZ2019} concerning maximum scattered linear sets of $\PG(1,q^5)$ that are not of pseudoregulus type. In that paper, a necessary and sufficient condition was proven for these sets to be of type LP under the assumption that the point $A$, as described in Section~\ref{s:geomprop}, has rank 5. 
As in \cite{MZ2019}, an algebraic curve $\cV$ of degree at most four is associated to 
$\wp_\Gamma(\Sigma)$.
Up to some special cases, this $\mathcal V$ turns out to be an absolutely irreducible
curve of genus at most three.
In the general case, the  Hasse-Weil Theorem guarantees the existence of a point with coordinates
rational over $\Fq$, while the remaining cases have been analyzed with 
a case-by-case analysis.
It follows that for $\rk A=5$ or  $\rk B=5$ any such MSLS is of LP type.

The case $\rk A=\rk B=4$, for which $\wp_\Gamma(\Sigma)$ would be of a new type, remains open and is partially treated in Section~\ref{s:rkAB4}.
In Subsection~\ref{ss:geo} it is proved that this is equivalent to the existence of a nucleus of a special scattered linear set in $\PG(2,q^5)$.
In Subsection~\ref{ss:alg} the possible new sets are described algebraically (Theorem~\ref{t:fecc}), Proposition~\ref{p:fecc}).
This allowed to run a \texttt{GAP} script showing that there is no new scattered linear set up to $q=25$.

The results of this work are summarized in Theorem~\ref{t:summary}.

\section{Some geometric properties}\label{s:geomprop}
A point $P\in\PG(d,q^n)$ of homogeneous coordinates $a_0,a_1,\ldots,a_d$ (with respect to some given frame) will be denoted by $P=[a_0,a_1,\ldots,a_d]$.
In this section we suppose that
\begin{itemize}
\item [-] $\Sigma \cong \PG(4,q)$ is a canonical $\Fq$-subgeometry in $\PG(4,q^5)$.
When  necessary coordinates will be chosen such that
\[
\Sigma=\{[t,t^q,t^{q^2},t^{q^3},t^{q^4}]\colon t\in\Fqc^*\}.
\]
\item [-] $\Gamma$ is a plane in $\PG(4,q^5)$ such that the projection $\mathbb{L}=\wp_{\Gamma}(\Sigma)$ is a maximum scattered linear set of $\PG(1,q^5)$.
\item [-] $1\neq\sigma \in \PGaL(5,q^5)$ belongs to the subgroup of order five $\mathbb G$  
that fixes $\Sigma$ pointwise.
\item [-] The linear set $\mathbb{L}=\wp_{\Gamma}(\Sigma)$ is not of pseudoregulus type.
\end{itemize}

\begin{lemma}\label{l:AB} 
There exist two points $A,B \in \Gamma$ such that $A^\sigma, B^{\sigma^2} \in \Gamma$ and these are unique.
\end{lemma}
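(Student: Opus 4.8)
The plan is to reduce the claim to a dimension count for the intersections $\Gamma\cap\Gamma^{\sigma^i}$. First observe that, since $\sigma$ has order five, a point $X\in\Gamma$ satisfies $X^\sigma\in\Gamma$ exactly when $X\in\Gamma\cap\Gamma^{\sigma^{-1}}=\Gamma\cap\Gamma^{\sigma^4}$, and $X\in\Gamma$ satisfies $X^{\sigma^2}\in\Gamma$ exactly when $X\in\Gamma\cap\Gamma^{\sigma^{-2}}=\Gamma\cap\Gamma^{\sigma^3}$. Thus the existence and uniqueness of $A$ and $B$ amount to the two intersections $\Gamma\cap\Gamma^{\sigma^4}$ and $\Gamma\cap\Gamma^{\sigma^3}$ being single points. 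As $\Gamma$ and each $\Gamma^{\sigma^i}$ are planes of $\PG(4,q^5)$, Grassmann's formula gives $\dim(\Gamma\cap\Gamma^{\sigma^i})\ge 2+2-4=0$, so these intersections are always nonempty; the whole point is to show their dimension equals $0$ and not $1$ or $2$.

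Since $\mathbb G$ has prime order five, each $\sigma^i$ with $1\le i\le4$ generates $\mathbb G$. The possibility $\dim(\Gamma\cap\Gamma^{\sigma^i})=1$ is ruled out at once by Theorem~\ref{t:CsZa20162}: if it occurred for some $i$, then $\mathbb L=\wp_\Gamma(\Sigma)$, being scattered, would be of pseudoregulus type (here $n-4=1$), against the hypothesis. It remains to exclude $\dim(\Gamma\cap\Gamma^{\sigma^i})=2$, that is, $\Gamma=\Gamma^{\sigma^i}$.

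This last exclusion is the step I expect to be the main obstacle. Suppose $\Gamma=\Gamma^{\sigma^i}$ for some $i$; as $\sigma^i$ generates $\mathbb G$, this forces $\Gamma^\sigma=\Gamma$, so $\sigma$ induces a collineation $\bar\sigma$ of the quotient line $\PG(4,q^5)/\Gamma\cong\PG(1,q^5)$. This $\bar\sigma$ is semilinear with companion automorphism the Frobenius $x\mapsto x^q$, a generator of $\Gal(\Fqc/\Fq)$; in particular $\bar\sigma\neq1$. Because $\sigma$ fixes $\Sigma$ pointwise, $\bar\sigma$ fixes the image of $\Sigma$, namely $\mathbb L$, pointwise. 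However, a nontrivial semilinear collineation of $\PG(1,q^5)$ whose companion automorphism generates $\Gal(\Fqc/\Fq)$ has its fixed points confined to a subline $\PG(1,q)$, hence at most $q+1$ of them, whereas $|\mathbb L|=(q^5-1)/(q-1)=q^4+q^3+q^2+q+1>q+1$. The contradiction yields $\Gamma\neq\Gamma^{\sigma^i}$.

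Putting the three cases together, $\dim(\Gamma\cap\Gamma^{\sigma^i})=0$ for $1\le i\le4$, so each such intersection is a single point. Setting $A=\Gamma\cap\Gamma^{\sigma^4}$ and $B=\Gamma\cap\Gamma^{\sigma^3}$, the relation $A\in\Gamma^{\sigma^4}$ gives $A^\sigma\in\Gamma$ and $B\in\Gamma^{\sigma^3}$ gives $B^{\sigma^2}\in\Gamma$; uniqueness follows since $\{X\in\Gamma\colon X^\sigma\in\Gamma\}=\Gamma\cap\Gamma^{\sigma^4}=\{A\}$ and $\{X\in\Gamma\colon X^{\sigma^2}\in\Gamma\}=\Gamma\cap\Gamma^{\sigma^3}=\{B\}$. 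The only genuinely delicate ingredient is the fixed-point bound for semilinear collineations invoked above; the rest is dimension bookkeeping combined with the cited characterisation of the pseudoregulus case.
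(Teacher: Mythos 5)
Your proof is correct and follows essentially the same route as the paper's: both reduce existence to showing that $\Gamma\cap\Gamma^{\sigma^4}$ and $\Gamma\cap\Gamma^{\sigma^3}$ are points, invoke the pseudoregulus characterization (Theorem~\ref{t:CsZa20162}) to exclude the case in which these intersections are lines, and obtain uniqueness from the identity $\{X\in\Gamma\colon X^\sigma\in\Gamma\}=\Gamma\cap\Gamma^{\sigma^4}$ and its analogue for $\sigma^2$. The one place where you go beyond the paper is the case $\dim(\Gamma\cap\Gamma^{\sigma^i})=2$, i.e.\ $\Gamma=\Gamma^{\sigma^i}$: the paper's one-line proof attributes every non-point case to the pseudoregulus characterization, but that theorem literally concerns only $\dim(\Gamma\cap\Gamma^\sigma)=n-4=1$, so the invariant-plane case does deserve a separate word, and you supply one. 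Your exclusion of it is sound: since $\sigma^i$ generates $\mathbb G$, invariance under $\sigma^i$ forces $\Gamma^\sigma=\Gamma$, the induced collineation $\bar\sigma$ of the quotient line $\PG(1,q^5)$ is nontrivial with companion automorphism whose fixed field is $\Fq$, hence has at most $q+1$ fixed points, too few to fix the $(q^5-1)/(q-1)$ points of $\mathbb L$ pointwise. (Tiny imprecision: the companion automorphism is $x\mapsto x^{q^s}$ for some $s$ coprime to $5$, not necessarily $x\mapsto x^q$ itself; but only the fact that it generates $\Gal(\Fqc/\Fq)$ is used.) A shorter standard alternative for this step: a $\sigma$-invariant subspace is spanned by its points of $\Sigma$ (Galois descent), which contradicts $\Gamma\cap\Sigma=\emptyset$ directly. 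Either way, your write-up is a complete and slightly more careful version of the paper's argument.
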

\begin{proof}
Consider the subspaces $\Gamma \cap \Gamma^{\sigma^{4}} $ and $\Gamma \cap \Gamma^{\sigma^3}$. 
By Theorem \ref{t:CsZa20162}, these are necessarily points, say $A$ and $B$, otherwise $\mathbb{L}$ is a linear set of pseudoregulus type.
Now, suppose that $X$ is a point in $\Gamma$ such that $X^\sigma \in \Gamma$, then $X \in \Gamma \cap \Gamma^{\sigma^4}$, so $X=A$. Replacing $\sigma$ by $\sigma^2$, the same argument  shows the uniqueness of point $B$.
\end{proof}

Hereafter, for the remainder of the article, the points $A, B \in \Gamma$ are defined by
\[
A = \Gamma \cap \Gamma^{\sigma^4} \quad \text{and} \quad
B = \Gamma \cap \Gamma^{\sigma^3}.
\] Moreover, the notation $X_i=X^{\sigma^i}$, $i=0,1,\ldots,4$, will be adopted for any 
point $X$ in $\PG(4,q^5)$, and similarly $\Gamma_i=\Gamma^{\sigma^i}$, $i=0,1,\ldots,4$.
Note that, for any $i,j \in \{0,1,2,3,4\}$ with $i\neq j$, $\Gamma_i\cap\Gamma_j=A_h$ or $\Gamma_i\cap\Gamma_j=B_h$ for some $h\in\{0,1,2,3,4\}$.

\begin{proposition} The ten points $A_i,B_i$ for $i=0,1,2,3,4$ are distinct.
\end{proposition}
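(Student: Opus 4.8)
The plan is to split the statement into two parts: distinctness \emph{within} each orbit $\{A_i\}$ and $\{B_j\}$ separately, and distinctness \emph{across} the two orbits. For the within-orbit part I would use that $\sigma$ has prime order $5$ and that its fixed points in $\PG(4,q^5)$ are exactly $\Sigma$ (the fixed locus of a generator of $\mathbb G$ is precisely the subgeometry). Since $A,B\in\Gamma$ and $\Gamma\cap\Sigma=\emptyset$, neither $A$ nor $B$ is fixed by $\sigma$, so each $\langle\sigma\rangle$-orbit has the full length $5$. Hence $A_0,\dots,A_4$ are pairwise distinct, and likewise $B_0,\dots,B_4$.

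For the cross part the key remark is that two orbits of a cyclic group are either disjoint or equal. So if $A_i=B_j$ for some $i,j$, then $\{A_0,\dots,A_4\}=\{B_0,\dots,B_4\}$, and writing $B_0=A_k$ one gets $B_j=A_{k+j}$ for all $j$ (indices mod $5$). I would then encode membership in $\Gamma$ by the sets $S=\{i:A_i\in\Gamma\}$ and $T=\{j:B_j\in\Gamma\}$, so that the hypothesis forces $T=S-k$. The geometric input comes from the proof of Lemma~\ref{l:AB}: the intersections $\Gamma\cap\Gamma_4=\{A_0\}$ and $\Gamma\cap\Gamma_3=\{B_0\}$ are single points, since otherwise $\mathbb L$ would be of pseudoregulus type, contrary to our standing assumption (Theorem~\ref{t:CsZa20162}).

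Translating these two single-point facts through the elementary equivalences $A_m\in\Gamma_4\iff A_{m+1}\in\Gamma$ and $B_m\in\Gamma_3\iff B_{m+2}\in\Gamma$, one obtains the combinatorial constraints: the only $m$ with $\{m,m+1\}\subseteq S$ is $m=0$ (so $\{0,1\}\subseteq S$), and the only $m$ with $\{m,m+2\}\subseteq T$ is $m=0$ (so $\{0,2\}\subseteq T$). Using $T=S-k$, the second constraint becomes: the only $m'$ with $\{m',m'+2\}\subseteq S$ is $m'=k$, together with $\{k,k+2\}\subseteq S$. Thus in all cases $S\supseteq\{0,1\}\cup\{k,k+2\}$, while the two ``uniqueness of a gap'' rules must still hold.

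It then remains a short case analysis on $k\in\{0,1,2,3,4\}$. In each case $S\supseteq\{0,1\}\cup\{k,k+2\}$ produces either an extra consecutive pair (violating that $\{0,1\}$ is the unique pair at distance $1$) or an extra pair at distance $2$ different from $\{k,k+2\}$ (violating its uniqueness); for instance $k=2$ forces $1,2\in S$, and $k=3$ forces $1,3\in S$ with $1\neq 3=k$, each a contradiction (the value $k=0$ likewise forces the consecutive pair $\{1,2\}$). Since every $k$ is excluded, no $A_i=B_j$ can occur, and combined with the within-orbit distinctness this gives the ten distinct points. I expect the only delicate point to be purely bookkeeping, namely setting up the ``gap'' dictionary between the $\Gamma\cap\Gamma_i$ and the sets $S,T$ correctly modulo $5$; once that is in place the case check is routine, and the sole genuinely geometric ingredients are the two single-point intersections guaranteed by the non-pseudoregulus hypothesis.
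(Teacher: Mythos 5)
Your proof is correct, and while it splits the statement exactly as the paper does and uses the identical within-orbit argument (a coincidence $A_i=A_j$ makes that point fixed by a nontrivial power of $\sigma$, hence a point of $\Sigma$, contradicting $\Gamma\cap\Sigma=\emptyset$), your cross-orbit argument runs on a genuinely different mechanism, though it rests on the same two geometric facts, namely that $\Gamma\cap\Gamma_4=\{A\}$ and $\Gamma\cap\Gamma_3=\{B\}$ are single points. The paper reduces to proving $A\neq B_i$ for $i=0,\dots,4$ and disposes of each case by a bespoke geometric identity --- e.g.\ $A=B$ gives $A\in\Gamma_3\cap\Gamma_4=(\Gamma\cap\Gamma_4)^{\sigma^4}$, hence $A=A_4$, while $A=B_3$ gives $B,B_2\in\Gamma\cap\Gamma_2$, so that intersection could not be a point. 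You instead use the orbit-partition principle to promote a single coincidence $A_i=B_j$ to a full shift $B_m=A_{k+m}$ for all $m$, encode $\Gamma$-membership in subsets $S$ and $T=S-k$ of $\mathbb{Z}_5$, and convert the two single-point intersections into uniqueness of the distance-$1$ gap in $S$ (at $0$) and of the distance-$2$ gap (at $k$); your five cases on $k$ correspond to the paper's five cases on $i$ via $k=-i \pmod 5$. The trade-off: your dictionary makes every case a mechanical check using only those two facts, and indeed all five values of $k$ die as you claim --- for the two you leave implicit, $k=1$ fails because $\{3,0\}=\{3,3+2\}\subseteq S$ is a distance-$2$ pair with $3\neq k$, and $k=4$ fails because $\{4,0\}$ is a consecutive pair not at $0$ --- whereas the paper's direct manipulation of plane intersections is shorter in each individual case but requires finding a separate identity for each of the five configurations.
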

\begin{proof}
First of all, we show that $A_i \neq A_j$ and $B_i \neq B_j$ for $i,j \in \{0,1,2,3,4\}$ distinct.
Indeed, if $A_i=A_j$ with $0\le i < j\le4$, then $A_i^{\sigma^{j-i}}=A_i$. 
So $A_i$, and hence $A$ belong to $\Sigma$, since they are fixed by $\mathbb{G}$, which leads to a contradiction.
The same argument holds for the points of type $B_i$. 
Next, it is enough to prove that $A$ is distinct from $B_i$ for $i=0,1,2,3,4$.
\begin{enumerate}
\item if $A=B$, then $\Gamma \cap \Gamma_4 = \Gamma \cap \Gamma_3$;
so, $A=\Gamma_3\cap\Gamma_4=(\Gamma\cap\Gamma_4)^{\sigma^4}=A_4$, 
a contradiction.
\item if $A=B_1$ then $\Gamma \cap \Gamma_4 = \Gamma_1 \cap \Gamma_4$;
so, $A=\Gamma\cap\Gamma_1=(\Gamma\cap\Gamma_4)^{\sigma}=A_1$,
a contradiction.
\item if $A=B_2$ then $\Gamma \cap \Gamma_4= \Gamma \cap \Gamma_2$;
so, $B_2=\Gamma_2\cap\Gamma_4=(\Gamma\cap\Gamma_3)^{\sigma^4}=B_4$,
a contradiction.
\item if $A=B_3$ then $B$, $B_2$ and $B_3$ belong to $\Gamma$, and $B,B_2\in\Gamma_2$. Then, $ \langle B, B_2\rangle $, which is a line as proved before, is contained in $\Gamma \cap \Gamma_2$. By Theorem \ref{t:CsZa20162}, this is a contradiction.
\item if $A= B_4$ then $\Gamma \cap \Gamma_4= \Gamma_2 \cap \Gamma_4$:
so, $B_4=\Gamma\cap\Gamma_2=(\Gamma\cap\Gamma_3)^{\sigma^2}=B_2$,
a contradiction.
\end{enumerate}
Then it follows that the ten points $A_i$ and $B_i$, $i=0,1,2,3,4$, are distinct.
\end{proof}

\begin{theorem}\label{LPlines}
 The maximum scattered linear set $\mathbb{L}=\wp_\Gamma(\Sigma)$ is of LP type if and only if $\la A_2,A_4 \ra \cap \Gamma \neq  \emptyset$, or  $\la B_3 ,B_4 \ra  \cap \Gamma \neq \emptyset  $.
\end{theorem}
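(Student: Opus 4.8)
The plan is to reduce Theorem~\ref{LPlines} to the already-established Theorem~\ref{t:ZZ} (the $n=5$ case of the LP-characterization) by showing that, under the standing hypotheses, the two geometric conditions there are equivalent to the disjunction $\la A_2,A_4\ra\cap\Gamma\neq\emptyset$ or $\la B_3,B_4\ra\cap\Gamma\neq\emptyset$. Recall from Lemma~\ref{l:AB} that $A=\Gamma\cap\Gamma_4$ and $B=\Gamma\cap\Gamma_3$ are points; applying $\sigma$ repeatedly, $A_1=\Gamma_1\cap\Gamma_5=\Gamma_1\cap\Gamma$ (indices mod $5$), so $\Gamma$ meets $\Gamma_1$ in $A_1$ and $\Gamma_4$ in $A$, and likewise $\Gamma$ meets $\Gamma_3$ in $B$ and $\Gamma_2$ in $B_2$. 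The key preliminary observation is that the plane $\Gamma$ always contains the four points $A,A_1,B,B_2$ (images of the intersection points lying in $\Gamma$), and the analysis in Theorem~\ref{t:ZZ} with $n=5$ is governed by exactly these incidences.

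First I would translate Theorem~\ref{t:ZZ} into the language of the $A_i,B_j$. With $n=5$, condition $(i)$ asks $\dim(\Gamma\cap\Gamma_1\cap\Gamma_2)>-2$, i.e.\ that the triple intersection is nonempty; condition $(ii)$ produces a point $P$ with $\Gamma=\la P,P_1,Q\ra$ such that the line $\la P_2,P_4\ra$ meets $\Gamma$. I would identify the point $P$ of Theorem~\ref{t:ZZ}(ii) with one of the rank-distinguished points $A$ or $B$, exploiting the final sentence of Theorem~\ref{t:ZZ} that such a $P$ has rank $n=5$. Because $\sigma$ has order five, the line $\la P_2,P_4\ra$ with $P=A$ becomes $\la A_2,A_4\ra$, and the corresponding line for the $B$-branch becomes $\la B_3,B_4\ra$; the two alternatives in the statement thus correspond to choosing $P=A$ (pivoting the plane on $A,A_1$) or $P=B$ (pivoting on $B,B_2$). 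The work is to verify that one of these two choices always realizes the structural hypothesis $\Gamma=\la P,P_1,\dots,P_{n-4},Q\ra=\la P,P_1,Q\ra$ of Theorem~\ref{t:ZZ}(ii), which uses that $A,A_1\in\Gamma$ (resp.\ $B,B_2\in\Gamma$) together with the fact that no three of the ten points are collinear (the cited Proposition~\ref{frame}) so that $A,A_1$ are independent and span a line in $\Gamma$.

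For the forward direction, assuming $\mathbb L$ is of LP type, Theorem~\ref{t:ZZ} hands me a generator $\sigma$ and a rank-$5$ point $P$ with $\la P_2,P_4\ra\cap\Gamma\neq\emptyset$; I would show $P\in\{A,B\}$ up to replacing $\sigma$ by a suitable power, so the meeting line is $\la A_2,A_4\ra$ or $\la B_3,B_4\ra$. The delicate point is that $\sigma$ in Theorem~\ref{t:ZZ} may differ from the one fixed here, and passing between generators $\sigma,\sigma^2,\sigma^3,\sigma^4$ permutes the roles of $A$ and $B$ and of the two lines; I would check this permutation sends the pair $(\la A_2,A_4\ra,\la B_3,B_4\ra)$ to itself as a set, so the disjunction is generator-independent. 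For the converse, given say $\la A_2,A_4\ra\cap\Gamma\neq\emptyset$, I would set $P=A$, verify conditions $(i)$ and $(ii)$ of Theorem~\ref{t:ZZ} — condition $(ii)$ being immediate from the hypothesis and the incidence $A,A_1\in\Gamma$, condition $(i)$ requiring a short argument that the meeting forces the triple intersection $\Gamma\cap\Gamma_1\cap\Gamma_2$ to be nonempty — and conclude LP type.

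The main obstacle I anticipate is the careful bookkeeping of indices modulo five when changing the generator $\sigma$, and in particular confirming that condition $(i)$ of Theorem~\ref{t:ZZ} is automatically satisfied (or can be arranged by choice of generator) whenever one of the two lines meets $\Gamma$; this is where I expect the hypothesis that no three of the $A_i,B_j$ are collinear, and the rank bounds $\rk A,\rk B\geq4$ from Proposition~\ref{linearlyindepedent}, to be used to rule out the degenerate configurations in which $\la A_2,A_4\ra$ or $\la B_3,B_4\ra$ lies \emph{inside} $\Gamma$ rather than merely meeting it in a single point.
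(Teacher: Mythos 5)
Your reduction to Theorem~\ref{t:ZZ} is exactly the paper's proof: for each generator $\tau\in\{\sigma,\sigma^2,\sigma^3,\sigma^4\}$ the unique point $Z$ with $Z,Z^\tau\in\Gamma$ (Lemma~\ref{l:AB}) is $A$, $B$, $B_2$, $A_1$ respectively, and in each case $\la Z^{\tau^2},Z^{\tau^4}\ra$ equals $\la A_2,A_4\ra$ or $\la B_3,B_4\ra$, so the disjunction is independent of the chosen generator. Up to that point your plan and the paper's argument coincide.

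The genuine problem is your reading of condition $(i)$ of Theorem~\ref{t:ZZ}. For $n=5$ it reads $\dim(\Gamma\cap\Gamma^\tau\cap\Gamma^{\tau^2})>-2$, and you interpret this as the triple intersection being \emph{nonempty}, proposing in the converse direction ``a short argument that the meeting forces the triple intersection $\Gamma\cap\Gamma_1\cap\Gamma_2$ to be nonempty.'' That step cannot be carried out, because the triple intersection is always empty in this setting: $\Gamma\cap\Gamma_1$ is the single point $A_1$ and $\Gamma\cap\Gamma_2$ is the single point $B_2$, and $A_1\neq B_2$ because the ten points $A_i,B_i$ are pairwise distinct; hence $\Gamma\cap\Gamma_1\cap\Gamma_2\subseteq\{A_1\}\cap\{B_2\}=\emptyset$. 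One can also check this on the LP example itself: for $f(x)=x^q+\delta x^{q^4}$ the vertex satisfies $\Gamma_f\cap\Gamma_f^\sigma\cap\Gamma_f^{\sigma^2}=\emptyset$, so if $(i)$ meant nonemptiness, no MSLS of $\PG(1,q^5)$ could be of LP type and the theorem would be false. The correct reading uses the convention $\dim\emptyset=-1>-2$: for $n=5$ condition $(i)$ is vacuous, which is precisely why the paper's proof never mentions it and only verifies condition $(ii)$. With this correction your converse direction becomes immediate (condition $(ii)$ holds by hypothesis, taking $Z=A$ or $Z=B$ and noting $Z\neq Z^\tau$ since $\Gamma\cap\Sigma=\emptyset$); without it, you are committed to proving a false statement, and your proof of the ``if'' direction collapses there.
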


\begin{proof}
 The result is a direct consequence of Theorem~\ref{t:ZZ}. 
 Indeed, the linear set $\mathbb{L}$ is of LP type if and only if there exists a generator $\tau$ of the group $\mathbb G$  
and a unique point, say $Z$, such that $Z,Z^\tau \in \Gamma$ and 
\begin{equation}\label{equation-1}
\la Z^{\tau^2},Z^{\tau^4} \ra \cap \Gamma \neq \emptyset.
\end{equation}
Now, if $\tau \in \{\sigma, \sigma^4\}$ then $Z\in \{A,A_1\}$ and   $\la Z^{\tau^2},Z^{\tau^4} \ra=\la A_2,A_4 \ra$; if $\tau \in \{\sigma^2, \sigma^3\}$ then $Z \in \{B,B_2\}$ and  $\la Z^{\tau^2},Z^{\tau^4} \ra=\la B_3,B_4 \ra$.
\end{proof}

\begin{definition}
If $\la A_2,A_4 \ra \cap \Gamma \neq  \emptyset$, then
$(\Gamma,\Sigma)$ has \emph{configuration~I}.
If $\la B_3 ,B_4 \ra  \cap \Gamma \neq \emptyset  $,  then 
$(\Gamma,\Sigma)$ has \emph{configuration~II}.
\end{definition}

\begin{proposition}\label{notaligned}
The points $A,A_1,B,B_2$ are not collinear.
\end{proposition}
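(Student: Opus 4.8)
The plan is to argue by contradiction: suppose $A,A_1,B,B_2$ lie on a common line $m\subseteq\Gamma$, and exploit the orbit $m_i=m^{\sigma^i}$, $i=0,1,\dots,4$, of $m$ under $\sigma$. Since $\mathbb L$ is not of pseudoregulus type, Theorem~\ref{t:CsZa20162} and Lemma~\ref{l:AB} guarantee that each $\Gamma\cap\Gamma_j$, $j\in\{1,2,3,4\}$, is a single point; explicitly $\Gamma\cap\Gamma_1=A_1$, $\Gamma\cap\Gamma_2=B_2$, $\Gamma\cap\Gamma_3=B$ and $\Gamma\cap\Gamma_4=A$ (apply $\sigma$ and $\sigma^2$ to the identities $A=\Gamma\cap\Gamma_4$, $B=\Gamma\cap\Gamma_3$). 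Reading off the orbit of $m$ under the collinearity assumption then gives $m_i\ni A_i,A_{i+1},B_i,B_{i+2}$ (indices mod $5$).

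The next step is to observe that the five lines $m_0,\dots,m_4$ meet pairwise. Indeed $m_0\cap m_j\subseteq\Gamma\cap\Gamma_j$ is a single point for $j=1,2,3,4$, and the remaining pairs follow by applying powers of $\sigma$; concretely each pair shares exactly one of the ten points $A_i,B_i$, e.g.\ $m_0\cap m_1=\{A_1\}$, $m_0\cap m_2=\{B_2\}$, $m_0\cap m_3=\{B\}$, $m_0\cap m_4=\{A\}$. In particular the $m_i$ are pairwise distinct. I would then invoke the classical dichotomy that a family of pairwise distinct, pairwise intersecting lines is either concurrent or coplanar.

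If the $m_i$ are concurrent through a point $P$, then $P\in m_0\cap m_1=\{A_1\}$ and $P\in m_0\cap m_2=\{B_2\}$ force $A_1=B_2$, contradicting the fact that the ten points $A_i,B_i$ are pairwise distinct. It remains to rule out coplanarity. Here $\pi=\la m_0,m_1\ra$ is a plane containing all five lines; since any two distinct $m_i$ span $\pi$ and $\{m_i\}$ is permuted by $\sigma$, we get $\pi^\sigma=\la m_1,m_2\ra=\pi$, so $\pi$ is $\sigma$-invariant. By Galois descent for subgeometries, a $\sigma$-invariant plane meets $\Sigma$ in a subplane $\pi\cap\Sigma\cong\PG(2,q)$, which has $q^2+q+1\ge 2$ points. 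Because $\Gamma\cap\Sigma=\emptyset$ forbids $\pi=\Gamma$, the two planes meet in a line, necessarily $\Gamma\cap\pi=m$. Now any two distinct points $P,P'\in\pi\cap\Sigma$ are joined by a line contained in $\pi$, which must meet the line $m\subseteq\Gamma$; as $P,P'\notin\Gamma$, this yields $\wp_\Gamma(P)=\wp_\Gamma(P')$, contradicting the injectivity of $\wp_\Gamma$ on $\Sigma$ (equivalent to $\mathbb L$ being scattered). Both cases being impossible, $A,A_1,B,B_2$ cannot be collinear.

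The main obstacle is the coplanar case. The crucial input is the structural fact that a $\sigma$-invariant plane is defined over $\Fq$ and therefore carries a full subplane of $\Sigma$; converting this into a genuine collapse of $\wp_\Gamma$ needs $\Gamma\cap\pi$ to be exactly a line, which I extract from $\Gamma\cap\Sigma=\emptyset$. The concurrent case, by contrast, is immediate from the distinctness of the ten points.
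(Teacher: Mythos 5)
Your proof is correct and takes essentially the same route as the paper's: both arguments show that the $\sigma$-orbit of the line spans a $\sigma$-invariant plane, which by Galois descent contains a line meeting $\Sigma$ in $q+1$ points, and that line must then intersect the original line inside $\Gamma$, contradicting scatteredness (injectivity of $\wp_\Gamma$ on $\Sigma$). The only difference is cosmetic: you reach coplanarity via the concurrent-or-coplanar dichotomy for pairwise intersecting lines (ruling out concurrency with the distinctness of the ten points $A_i,B_i$), whereas the paper gets it directly by noting $\ell^{\sigma^2}=\la A_2,B_2\ra\subseteq\la \ell,\ell^{\sigma}\ra$, so that $\pi=\la \ell,\ell^{\sigma}\ra$ is already $\sigma$-invariant.
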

\begin{proof}
Suppose that there exists a line $\ell$ containing the points $A,A_1,B,B_2$.
First of all, $\ell^\sigma\neq\ell$ since otherwise $\ell$ would meet $\Sigma$ in $q+1$ points.
Since $A_1\in\ell\cap\ell^\sigma$, the span $\pi=\la\ell\cup\ell^\sigma\ra$ is a plane.
The points $A,A_1,A_2,B,B_1,B_2,B_3$ lie on $\pi$; so, 
$\ell^{\sigma^2}=\la A_2,B_2\ra\subseteq\pi$.
It follows $\pi^\sigma=\la\ell^\sigma\cup\ell^{\sigma^2}\ra=\pi$.
Then $\pi \cap \Sigma$ is plane of $\Sigma$. Hence, $\pi$ contains a line $m$ such that $|m \cap \Sigma|=q+1$. 
Therefore, $m$ meets $\ell \subseteq \Gamma$, which, by Theorem~\ref{th:scatteredness} $(iii)$, yields a contradiction.
\end{proof}

As consequence of proposition above, we have $\Gamma=\langle A,A_1,B,B_2 \rangle$.

\begin{lemma}\label{Baligned}
If $B \in \la A,A_1 \ra$,  then
\begin{itemize}
\item [$i)$] $\rk A =5$,
\item [$ii)$] $\mathbb{L}$ is not of LP type.
\end{itemize} 
\end{lemma}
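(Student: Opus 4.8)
The plan is to analyze the hypothesis $B\in\la A,A_1\ra$ geometrically, exploiting that $B$ is a point of $\Gamma$ while $A,A_1$ are the intersection points $\Gamma\cap\Gamma_4$ and $\Gamma\cap\Gamma_1$.

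First I would apply $\sigma$ to the relation $B\in\la A,A_1\ra$ to obtain $B_1\in\la A_1,A_2\ra$, and $\sigma^2$ to get $B_2\in\la A_2,A_3\ra$. Since $B_2\in\Gamma$ by Lemma~\ref{l:AB}, and $A_2,A_3$ are not automatically in $\Gamma$, I would look at where the line $\la A_2,A_3\ra$ meets $\Gamma$: the point $B_2$ is one such intersection. The idea is to combine the membership relations for several powers of $\sigma$ to force the orbit $\{A,A_1,A_2,A_3,A_4\}$ to span less than the full $\PG(4,q^5)$, unless a contradiction arises. Concretely, the four lines $\la A_i,A_{i+1}\ra$ (indices mod $5$) each contain a $B$-type point, and the $B$-orbit spans at least a plane; tracking these incidences should pin down the span of the $A$-orbit.

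For part $i)$, the goal is $\rk A=5$, i.e. $A,A_1,A_2,A_3,A_4$ are linearly independent. I would argue by contradiction: if $\rk A\le4$, then the $A$-orbit lies in a hyperplane (a solid) $H$, which is necessarily $\sigma$-invariant since it is spanned by a full $\sigma$-orbit. A $\sigma$-invariant solid meets $\Sigma$ in a subgeometry $\PG(3,q)$, and the relations $B_i\in\la A_i,A_{i+1}\ra\subseteq H$ would force all of $B,B_1,\ldots,B_4$ into $H$ as well. Then both orbits lie in $H$, and since $\la A,A_1\ra$ and the $B$-orbit together with $\Gamma$ are confined to $H$, I would derive that $\Gamma\subseteq H$ or that $\Gamma$ meets $\Sigma\cap H$, contradicting that $\wp_\Gamma(\Sigma)$ is scattered (equivalently, that every point of $\Gamma$ has rank greater than two). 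The main obstacle I anticipate is ruling out all degenerate sub-cases where the span drops: one must check that the $A$- and $B$-orbits cannot both collapse into a common $\sigma$-invariant subspace without violating either Proposition~\ref{notaligned} or the non-collinearity of the ten distinct points.

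For part $ii)$, assuming $\rk A=5$ from part $i)$, I would invoke Theorem~\ref{LPlines}: $\mathbb L$ is of LP type iff $\la A_2,A_4\ra\cap\Gamma\neq\emptyset$ or $\la B_3,B_4\ra\cap\Gamma\neq\emptyset$. The plan is to show both intersections are empty under the hypothesis $B\in\la A,A_1\ra$. Applying $\sigma^2$ and $\sigma^3$ to the defining relation yields $B_2\in\la A_2,A_3\ra$ and $B_3\in\la A_3,A_4\ra$, and I would use these to locate $\la B_3,B_4\ra$ and $\la A_2,A_4\ra$ explicitly in terms of the now-independent frame $A,A_1,A_2,A_3,A_4$. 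Since the five points form a basis, a line such as $\la A_2,A_4\ra$ meets $\Gamma$ only if $\Gamma$ contains a point in that specific two-dimensional span; writing out $\Gamma=\la A,A_1,B\ra=\la A,A_1\ra$ (as $B\in\la A,A_1\ra$, so $\Gamma$ would actually be forced to contain more independent points) and comparing coordinates should show the intersection is empty, completing the contradiction with LP type. The delicate point here is handling configuration~I versus configuration~II symmetrically and confirming that neither LP-condition can hold once the frame is fixed.
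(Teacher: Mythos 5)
Your proposal is correct and follows essentially the same route as the paper's proof: for $i)$ you argue, exactly as the paper does, that if $\rk A\le 4$ then the ($\sigma$-invariant) span of the $A$-orbit is at most a solid which, via $B_j\in\la A_j,A_{j+1}\ra$ and Proposition~\ref{notaligned}, absorbs the whole $B$-orbit and hence $\Gamma$ itself, which is impossible; and for $ii)$ you reduce to Theorem~\ref{LPlines} and check that both $\la A_2,A_4\ra$ and $\la B_3,B_4\ra$ miss $\Gamma$, using the same incidence relations as the paper, only written in coordinates with respect to the basis $A,A_1,A_2,A_3,A_4$ instead of the paper's synthetic argument with the solid $\la A,A_1,A_2,A_3\ra$ and the plane $\la A,A_3,A_4\ra$ --- a cosmetic difference. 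The one slip to repair is ``$\Gamma=\la A,A_1,B\ra=\la A,A_1\ra$'': since $A,A_1,B$ are collinear by hypothesis, Proposition~\ref{notaligned} forces $B_2\notin\la A,A_1\ra$, so $\Gamma=\la A,A_1,B_2\ra$ with $B_2\in\la A_2,A_3\ra$, and with this identification your coordinate computation does indeed show both intersections are empty.
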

\begin{proof}
If $\rk A<5$, then there is a solid
(i.e.\ a subspace of projective dimension three) $T$  such that $A_i,B_i\in  T$ for all 
$i=0,1,2,3,4$, and $T$ also contains all planes $\Gamma_i$, a contradiction.

Now, since $\rk A=5$, 
$S=\la A,A_1,A_2,A_3\ra$ is a solid.
Since $B \in \langle A, A_1 \rangle$, $B_2\in\la A_2,A_3\ra$ and by Proposition \ref{notaligned}, the plane $\Gamma$ is contained in $S$. Since
$\la A_2,A_4 \ra \cap S= A_2$, and $A_2 \notin \Gamma$ (cf.\ Lemma \ref{l:AB}),
$\la A_2,A_4\ra \cap \Gamma = \emptyset$.

By Theorem \ref{LPlines}, it remains to show that $\la B_3 ,B_4 \ra \cap \Gamma = \emptyset$.
For any $i=1,2,3,4$, the intersection $\Gamma_i \cap S$ is a line. By hypothesis $B \in \langle A,A_1 \rangle$, then the points $A_3,A_4,B_3$ are collinear.
Taking into account $A_4 \not \in S$, $A_3 \in S$, we have that $B_3$ is not in $S$.
In the same way, since $A \in S$, $A_4 \not \in S$ and $A_4,A,B_4$ are collinear, 
$B_4$ is not in $S$.
Consider now a plane $\pi$ containing $A, A_3, A_4,B_3,B_4$. 
Such $\pi$ meets $S$ in the line $\la A, A_3 \ra $ and $\pi \cap \Gamma=A$.  
The line $\la B_3, B_4 \ra \not \subseteq \Gamma_4$ is contained in $\pi$, is distinct from 
$\la A  ,B_4 \ra \subseteq \Gamma_4$, hence
$\la B_3 ,B_4 \ra \cap \Gamma = \emptyset$. 
\end{proof}

\begin{proposition}\label{linearlyindepedent}
Any four points among $A,A_1,A_2,A_3,A_4$ are independent.
Any four points among $B,B_1,B_2,B_3,B_4$ are independent.
\end{proposition}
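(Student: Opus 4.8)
The plan is to show that the whole statement is equivalent to the two rank bounds $\rk A\ge4$ and $\rk B\ge4$, and then to establish these bounds. The key preliminary observation is that $\mathbb G=\langle\sigma\rangle$ is cyclic of order $5$ and permutes the points $A,A_1,A_2,A_3,A_4$ cyclically by $A_i\mapsto A_{i+1}$ (indices modulo $5$); being a collineation, $\sigma$ preserves linear (in)dependence of point sets, and it permutes the five $4$-subsets of $\{A,A_1,A_2,A_3,A_4\}$ transitively, since these subsets are indexed by their omitted element and $\sigma$ acts on the index set as a single $5$-cycle. Hence the five $4$-subsets are either all independent or all dependent. Now $\rk A\ge4$ means that $A,A_1,A_2,A_3,A_4$ span a subspace of projective dimension at least $3$, so four of these five points are independent; by the transitivity, all of the $4$-subsets are then independent. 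Finally, the statement for the $B_i$ is nothing but the statement for the $A_i$ relative to the generator $\sigma^2$, whose associated point $\Gamma\cap\Gamma^{\sigma^{8}}=\Gamma\cap\Gamma_3$ is exactly $B$; so it suffices to prove $\rk A\ge4$ for an arbitrary generator.

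To prove $\rk A\ge4$ I would argue by contradiction, assuming $\rk A\le3$. Then $W=\langle A,A_1,A_2,A_3,A_4\rangle$ is a $\sigma$-invariant subspace of projective dimension at most $2$. If $\dim W\le1$, then $W=\langle A,A_1\rangle$ is a $\sigma$-invariant line contained in $\Gamma$ (recall $A,A_1\in\Gamma$), hence it meets $\Sigma$ in $q+1$ points, contradicting $\Gamma\cap\Sigma=\emptyset$.

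The essential case is $\dim W=2$, and here the naive idea of blocking $\langle A,A_1\rangle$ by the subgeometry fails: the canonical subplane $W\cap\Sigma\cong\PG(2,q)$ sitting inside $W\cong\PG(2,q^5)$ is far too small to meet every line of $W$. Instead I would use the global incidence geometry in $\PG(4,q^5)$. Consider the solid $S=\langle\Gamma,W\rangle$; since $\Gamma\cap W=\langle A,A_1\rangle$ is a line, $\dim S=3$. Because $\wp_\Gamma(\Sigma)$ is not of pseudoregulus type, $\Gamma\cap\Gamma_2$ is the single point $B_2$ (Theorem~\ref{t:CsZa20162}), so $\Gamma$ and $\Gamma_2$ span $\PG(4,q^5)$; consequently $\langle\Gamma_2,S\rangle=\PG(4,q^5)$ and $\dim(\Gamma_2\cap S)=2+3-4=1$. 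This line contains $\langle A_2,A_3\rangle$ (as $A_2,A_3\in\Gamma_2\cap W\subseteq\Gamma_2\cap S$) and also the point $B_2=\Gamma\cap\Gamma_2\subseteq S$; therefore $B_2\in\langle A_2,A_3\rangle$, and applying $\sigma^{-2}$ yields $B\in\langle A,A_1\rangle$. By Lemma~\ref{Baligned} this forces $\rk A=5$, contradicting $\rk A\le3$. Hence $\rk A\ge4$, and by the symmetry reduction $\rk B\ge4$ as well; combined with the first paragraph this proves the proposition and records the bounds $\rk A\ge4$, $\rk B\ge4$.

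I expect the case $\dim W=2$ to be the main obstacle, precisely because it cannot be settled inside the invariant plane $W$ alone: the contradiction only emerges after transporting the configuration into the solid $S$ and exploiting that any two of the planes $\Gamma_i$ span the whole $\PG(4,q^5)$, which is what pins $B$ onto $\langle A,A_1\rangle$ and lets Lemma~\ref{Baligned} close the argument.
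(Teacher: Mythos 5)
Your proof is correct, and it is organized around a genuinely different skeleton than the paper's, even though the underlying geometric engine is the same. The paper proceeds incrementally: it gets $A,A_1,A_2$ independent from the distinctness of the ten points, assumes $A_3\in\la A,A_1,A_2\ra$, observes that the then-coplanar lines $\la A,A_1\ra\subseteq\Gamma$ and $\la A_2,A_3\ra\subseteq\Gamma_2$ must meet in $B_2$, and derives the contradiction inline by noting that the $\sigma$-invariant span $\la A,A_1,A_2,A_3,A_4\ra$, of dimension at most three, would then contain all ten points and hence all planes $\Gamma_i$, which is impossible since $\la\Gamma,\Gamma_2\ra=\PG(4,q^5)$. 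You instead reformulate the whole statement as the two rank bounds $\rk A\ge 4$ and $\rk B\ge 4$ (making explicit the transitivity of $\sigma$ on the $4$-subsets, which the paper leaves implicit in ``the first assertion follows''), reduce the $B$-bound to the $A$-bound via the generator $\sigma^2$, and in the essential case $\dim W=2$ you pin $B_2$ onto $\la A_2,A_3\ra$ by a Grassmann count in the solid $S=\la\Gamma,W\ra$ rather than by coplanarity of two lines; you then close by citing Lemma~\ref{Baligned}, which is legitimate (it precedes the proposition and does not depend on it) and whose own proof is precisely the collapse argument the paper re-runs inline. Your route buys a cleaner logical structure, symmetric in the choice of generator, and reuses an existing lemma; its cost is an extra degenerate case $\dim W\le 1$ (correctly settled by the invariant-line-meets-$\Sigma$ fact, the same one the paper uses in Proposition~\ref{notaligned}) and one small step you should make explicit: $\Gamma\cap W$ is exactly the line $\la A,A_1\ra$, because if it were larger then $W=\Gamma$ would be $\sigma$-invariant, contradicting that $\Gamma\cap\Gamma^{\sigma^4}$ is a single point.
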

\begin{proof}
Since $A_2 \not \in \la A,A_1 \ra $, the points $A,A_1,A_2$ are independent. 
If $A_3 \in \la A,A_1,A_2\ra$, then the lines $\la A, A_1 \ra \subseteq \Gamma$ and 
$\la A_2,A_3 \ra \subseteq \Gamma_2$ have a point in common  which must be $B_2$, the intersection point of $\Gamma \cap \Gamma_2$. 
Let $S=\la A,A_1,A_2,A_3,A_4\ra$.
The dimension of $S$ is at most three.
From $B_2\in S$ and $S^\sigma=S$,  it follows that $S$ contains all ten points of type $A_i$ and $B_i$.
Therefore, by Proposition  \ref{notaligned}, $S$ contains all planes $\Gamma_i$, $i=0,1,2,3,4$,
a contradiction.
This implies that $A,A_1,A_2,A_3$ are independent and the first assertion follows.

Similarly, $B$, $B_2$ and $B_4$ are independent. Assume $B_1\in\la B,B_2,B_4\ra$.
Then the lines $\la B,B_2\ra\subseteq\Gamma$ and $\la B_4,B_1\ra\subseteq\Gamma_4$
 meet in the point $A$.
This leads to a contradiction similar to the previous one.
\end{proof}

Throughout this paper, we denote the \textit{trace} and the \textit{norm} functions of $ \F_{q^5}$ over $\F_q$ by
$$\tr_{q^5/q}(x) = x + x^q + x^{q^2} + x^{q^3} + x^{q^4} \textnormal{ and } \N_{q^5/q}(x) = x^{1+q+q^2+q^3+q^4},$$
respectively. For more details on their properties, see \cite[Chapter~2]{LidlNeid}.\\

In \cite[Proposition 3.8]{Z}, the third author showed that $f_b(x) = x^q +b x^{q^2} \in \F_{q^5}[x]$ is non-scattered for any $b \in \F_{q^5}^*$.
The following result allows us to complete the analysis of scatteredness of binomials in $\F_{q^5}[X]$.

\begin{proposition} \label{notscattered}
For any $b \in \F^*_{q^5}$, the linearized polynomial $g_b(x)=x^{q^2}+bx^{q^4}$ is not scattered.
\end{proposition}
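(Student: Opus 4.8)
My plan is to work directly with the scattered criterion \eqref{e:sp} and to show that the associated ratio map fails to be injective for every $b$. Writing $\psi(x)=g_b(x)/x=x^{q^2-1}+bx^{q^4-1}$, condition \eqref{e:sp} says that $g_b$ is scattered precisely when $\psi$ is injective on $\Fqc^*/\Fq^*$ (equivalently, when each $\Fq$-linear map $x\mapsto g_b(x)-mx$, $m\in\Fqc$, has kernel of $\Fq$-dimension at most one). So it suffices to produce, for every $b$, two classes of $\Fqc^*/\Fq^*$ with the same $\psi$-value, i.e.\ a single $m$ for which $x\mapsto g_b(x)-mx$ has a two-dimensional $\Fq$-kernel.

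The key simplification is to transport everything onto the norm-one torus $\mu=\{t\in\Fqc^*\colon\N_{q^5/q}(t)=1\}$. Since $\gcd(q^2-1,q^5-1)=q-1$, the map $x\mapsto x^{q^2-1}$ induces a bijection $\Fqc^*/\Fq^*\to\mu$, and setting $p=x^{q^2-1}\in\mu$ one gets $\psi(x)=p\,(1+bp^{q^2})=:h(p)$. Hence $g_b$ is scattered if and only if $h(p)=p+bp^{q^2+1}$ is injective on $\mu$. I would then read a collision $h(p_1)=h(p_2)$ with $p_2=tp_1$, $t\in\mu\setminus\{1\}$, as the single relation $p_1^{q^2}=(1-t)\bigl(b(t^{q^2+1}-1)\bigr)^{-1}$; as $p\mapsto p^{q^2}$ permutes $\mu$, this is solvable for $p_1$ exactly when the right-hand side lies in $\mu$. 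Taking norms, the whole question collapses to finding
\[
t\in\mu,\ t\neq1,\ t^{q^2+1}\neq1,\ \text{with}\ \N_{q^5/q}(1-t)=\N_{q^5/q}(b)\,\N_{q^5/q}(t^{q^2+1}-1).
\]
Here I would exploit that $\gcd(q^2+1,q^4+q^3+q^2+q+1)=1$, so that $t\mapsto t^{q^2+1}$ permutes $\mu$; thus the goal is equivalent to showing that $t\mapsto\N_{q^5/q}\!\bigl((1-t)/(t^{q^2+1}-1)\bigr)$ is surjective onto $\Fq^*$, because $\N_{q^5/q}(b)$ exhausts $\Fq^*$ as $b$ varies.

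The main obstacle is exactly this surjectivity. The natural symmetries of the map ($t\mapsto t^{-1}$, the Frobenius $t\mapsto t^q$, and $t\mapsto t^{q^2+1}$) all leave its value invariant, so there is no cheap scaling argument, and one must genuinely produce a solution for each prescribed norm value. I would finish by viewing the solution set as the $\Fq$-points of the subvariety of the four-dimensional norm-one torus $\mu$ cut out by the single equation above; being of codimension one it has dimension three, and a Lang--Weil/Hasse--Weil estimate then guarantees an $\Fq$-rational point once $q$ is large enough, the finitely many small $q$ being disposed of by direct computation. Since $\N_{q^5/q}(b)$ is in fact the only invariant here---replacing $b$ by $b\lambda^{q^4-q^2}$ preserves both $\N_{q^5/q}(b)$ and the scattered property of $g_b$---this step reduces, for each fixed $q$, to the $q-1$ norm cosets. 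I expect the irreducibility and genus bookkeeping needed to make the point count unconditional, together with the small-$q$ cases, to be the technical heart of the proof.
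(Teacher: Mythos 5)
Your reduction is correct, and it is genuinely different from the paper's route: transporting the collision condition onto the norm-one torus via $p=x^{q^2-1}$, and arriving at ``$g_b$ fails to be scattered iff there exists $t\in\mu\setminus\{1\}$ with $\N_{q^5/q}(1-t)=\N_{q^5/q}(b)\,\N_{q^5/q}(t^{q^2+1}-1)$'', is a clean equivalent reformulation, and your observation that scatteredness of $g_b$ depends only on $\N_{q^5/q}(b)$ is also right. The paper proceeds quite differently: by \cite[Proposition 3.10]{Z} non-scatteredness of $g_b$ is equivalent to the existence of a point, subject to a trace condition, on the curve $b^{-1}x^qy+y^{q^2}x+xy=0$; substituting $y=z^{q^2}-z$, solving a kernel condition for a linearized binomial via a norm equation, and introducing $\theta$ with $\theta^{q-1}=b^{-1}(z^{q^2}-z)/(z^q-z)$, it lands on the same type of curve-plus-trace condition for a $q$-binomial, whose solvability is exactly what \cite[Propositions 3.8 and 3.10]{Z} already guarantee. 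In other words, the paper never has to count points on any new variety; it recycles an existence statement that is already proved in the literature.

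The gap in your proposal is that the final step --- surjectivity of $t\mapsto\N_{q^5/q}\bigl((1-t)/(t^{q^2+1}-1)\bigr)$ from $\mu\setminus\{1\}$ onto $\Fq^*$ --- \emph{is} the entire content of the proposition, and you do not prove it; you only outline a plan (Lang--Weil on a codimension-one subvariety of the torus, plus a small-$q$ computation) and you yourself defer the ``irreducibility and genus bookkeeping'' as the technical heart. That heart is missing, and it is not a formality: Lang--Weil requires an absolutely irreducible component of the correct dimension defined over $\Fq$, and for varieties cut out by norm equations this can genuinely fail for special parameter values, in which case a separate argument is needed for each degenerate case. The paper's own Section 4.1 illustrates how much work this costs in a directly analogous situation: the quartic there becomes reducible precisely when $\varepsilon=1$ and $\delta^2+3\delta+1=0$, and an ad hoc construction of a rational point is required. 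Until you (i) establish absolute irreducibility, or treat the reducible cases, uniformly in the norm value $\nu\in\Fq^*$, (ii) make the point count effective so that the exceptional values of $q$ are explicitly identified, and (iii) actually carry out the verification for those small $q$, what you have is a correct and interesting reformulation of the statement, not a proof of it.
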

\begin{proof}
By Propositions 3.8 and 3.10 in \cite{Z}, since the polynomial $x^q-b^{-1}x^{q^2} \in \F_{q^5}[x]$ is not scattered for any $b \in \F_{q^5}^*$, the algebraic curve 
\begin{equation}\label{chi'b}
    \chi'_b:-bX^{q-1}+Y^{q-1}+1=0
\end{equation} has a point $(\alpha_0,\beta_0)$ in $\AG(2,q^5)$ with coordinates in $\F_{q^5}^*$ such that $\tr_{q^5/q}(\beta_0)=0$. Then
 by \eqref{chi'b},
\begin{equation}\label{alpha0}
    \alpha_0^{q-1}=b^{-1}(\beta_0^{q-1}+1).
\end{equation}
Since $\tr_{q^5/q}(\beta_0)=0$, there exists $z_0 \in \F_{q^5}\setminus \F_q$ such that $\beta_0=z_0^{q}-z_0$.
 Moreover, since for any $x \in \F_{q^5}$, $\N_{q^5/q}(x)^q=\N_{q^5/q}(x)$,
 $$\N_{q^5/q} \left  (-b^{-1} \frac{z_0^{q^2}-z_0}{z_0^{q^4}-z_0} \right )=\N_{q^5/q} \left  (b^{-1} \frac{z_0^{q^2}-z_0}{z_0^{q}-z_0} \right )=\N_{q^5/q}(b^{-1}(\beta_0^{q-1}+1))$$
 and by \eqref{alpha0}, we get 
 $$\N_{q^5/q} \left  (-b \frac{z_0^{q^4}-z_0}{z_0^{q^2}-z_0} \right )=1.$$
 Hence, there exists  $x_0 \in \F_{q^5}^*$ such that $x_0^{q-1}=-b \frac{z_0^{q^4}-z_0}{z_0^{q^2}-z_0}$. Now, putting $y_0=z_0^{q^2}-z_0$ we have that
 $$x_0^{q-1}=-b \frac{z_0^{q^4}-z_0^{q^2}+z_0^{q^2}-z_0}{z_0^{q^2}-z_0}=-b(y_0^{q^2-1}+1).$$
Then, $(x_0,y_0)$ is a point of the  algebraic curve  
\begin{equation*}\label{curve}
\chi_b : b^{-1}X^{q-1}+Y^{q^2-1}+1=0
\end{equation*}
in $\AG(2,q^5)$  with coordinates in $\F^*_{q^5}$ and $\tr_{q^5/q}(y_0)=0$. By  \cite[Proposition 3.10]{Z}, the polynomial $g_b(x)$ is not scattered.
\end{proof}

\begin{proposition} \label{frame}
The quadruple $\mathcal{F}=(A,A_1,B,B_2)$ is a frame for $\Gamma$,
i.e., no three points of $\mathcal F$ are collinear.
\end{proposition}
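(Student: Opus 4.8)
The plan is to establish that no three of $A,A_1,B,B_2$ are collinear; by Proposition~\ref{notaligned} they are not all collinear, so only the four collinear triples must be excluded. First I would reduce these to a single case. Replacing $\sigma$ by another generator $\sigma^j$ of $\mathbb G$ changes neither $(\Gamma,\Sigma)$ nor any of the standing hypotheses, while the points playing the roles of $A$ and $B$ become $\Gamma\cap\Gamma^{\sigma^{4j}}$ and $\Gamma\cap\Gamma^{\sigma^{3j}}$. Since $\Gamma\cap\Gamma_1=A_1$, $\Gamma\cap\Gamma_2=B_2$, $\Gamma\cap\Gamma_3=B$ and $\Gamma\cap\Gamma_4=A$, the values $j=1,2,3,4$ turn the single relation ``$B\in\la A,A_1\ra$'' into the four collinearities $B\in\la A,A_1\ra$, $A_1\in\la B,B_2\ra$, $A\in\la B,B_2\ra$ and $B_2\in\la A,A_1\ra$, respectively. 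Hence it suffices to prove $B\notin\la A,A_1\ra$ for an arbitrary generator; applying this with $\sigma,\sigma^2,\sigma^3,\sigma^4$ excludes all four triples.

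So suppose $B\in\la A,A_1\ra$. By Lemma~\ref{Baligned} this forces $\rk A=5$, so $A,A_1,A_2,A_3,A_4$ are a basis. I would pick representatives $\mathbf a_i$ of $A_i$ with $\mathbf a_i^\sigma=\mathbf a_{i+1}$ (indices mod $5$); this is possible because $\sigma^5$ acts as a scalar of $\Fq^*$ that can be normalised to $1$ by rescaling $\sigma$, and in the frame $\mathbf a_i=\mathbf e_i$ one recovers the standard form $\Sigma=\{[u,u^q,u^{q^2},u^{q^3},u^{q^4}]\}$, $A_i=[\mathbf e_i]$. Writing $B=\alpha A+\beta A_1$ with $\alpha,\beta\in\Fqc^*$ (nonzero since the ten points are distinct), one finds $B_2=[0,0,\alpha^{q^2},\beta^{q^2},0]$, and therefore $\Gamma=\la\mathbf e_0,\mathbf e_1,\alpha^{q^2}\mathbf e_2+\beta^{q^2}\mathbf e_3\ra$.

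Next I would read off $\mathbb L$ by projecting $\Sigma$ from $\Gamma$ onto the complementary line $\la\mathbf e_3,\mathbf e_4\ra$: eliminating the first three coordinates sends $[u,u^q,u^{q^2},u^{q^3},u^{q^4}]$ to $[\,u^{q^3}-c\,u^{q^2}:u^{q^4}\,]$ with $c=(\beta/\alpha)^{q^2}\neq0$, so that, after setting $x=u^{q^4}$, $\mathbb L=L_f$ with $f(x)=x^{q^4}-c\,x^{q^3}$. The exponent pair $\{3,4\}$ is not of LP (nor of pseudoregulus) type, but it lies in the same orbit under the relabelling $q\mapsto q^{3}$, which sends $f$ to $x^{q^2}-c\,x^{q^4}=g_{-c}$. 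Using that $\Fq$-scatteredness is preserved under this relabelling, $f$ is scattered if and only if $g_{-c}$ is; by Proposition~\ref{notscattered} the latter is not, so $\mathbb L$ is not scattered, contradicting the standing hypothesis. This excludes $B\in\la A,A_1\ra$, and the reduction finishes the proof.

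The step I expect to be the main obstacle is the passage from $f(x)=x^{q^4}-c\,x^{q^3}$ to the form $x^{q^2}+b\,x^{q^4}$ of Proposition~\ref{notscattered}. These two binomials are \emph{not} equivalent under the collineation group of $\PG(1,q^5)$ (their exponent pairs $\{3,4\}$ and $\{2,4\}$ differ), so one cannot appeal to projective equivalence; instead one must prove that, for $\gcd(j,5)=1$, the polynomial $\sum_i a_i x^{q^i}$ is $\Fq$-scattered if and only if $\sum_i a_i x^{q^{ij}}$ is, and apply this with $j=3$. I expect this to follow from analysing the associated Dickson matrix, whose kernel dimensions $\dim_{\Fq}\ker(f-m\cdot\mathrm{id})$, taken over all $m$, are permuted but not changed by the index substitution $i\mapsto ij$. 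The remaining ingredients — the normalisation making $\sigma^5$ the identity on representatives, and the projection computation — are routine.
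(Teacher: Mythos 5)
Your reduction of the four collinear triples to the single relation $B\in\la A,A_1\ra$ by letting the generator of $\mathbb G$ vary is correct; it is the same observation the paper uses in the opposite direction (fix one generator $\bar\sigma$, note that the set $\{A,A_1,B,B_2\}$ does not depend on the choice of generator, and split into cases). However, this reduction does not buy you a single algebraic case. After the reduction, the generator is \emph{arbitrary}, so its companion automorphism is $x\mapsto x^{q^s}$ with $s$ any of $1,2,3,4$; your coordinate computation silently assumes $s=1$. In the frame $\mathbf a_i=\mathbf e_i$ one actually recovers $\Sigma=\{[u,u^{q^s},u^{q^{2s}},u^{q^{3s}},u^{q^{4s}}]\colon u\in\Fqc^*\}$, and the projection yields $f(x)=x^{q^{4s}}-c\,x^{q^{3s}}$ (exponents mod $5$). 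As $s$ runs over $1,2,3,4$, the exponent pair $\{3s,4s\}$ runs over $\{3,4\},\{1,3\},\{2,4\},\{1,2\}$ --- exactly the four cases you merged, now reappearing as four binomial types. So your whole proof rests on the ``relabelling'' principle you defer to the end.

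That principle is the genuine gap. It is not a known fact, and the mechanism you propose cannot deliver it: permuting rows and columns of the Dickson matrix of $f-mx$ by $i\mapsto ij$ produces a matrix whose row $i$ carries the Frobenius twist $q^{ij}$, whereas the Dickson matrix of $f^{(j)}-m'x$ carries $q^i$ on row $i$; no permutation or global Frobenius identifies the two, and equating their ranks is precisely the statement to be proved. Moreover, kernel dimensions are genuinely \emph{not} invariant under $i\mapsto ij$ with coefficients fixed: over $\F_8$ (so $q=2$, $n=3$, $\omega^3=\omega+1$) the map $g(x)=x+\omega x^{q}+\omega^2x^{q^2}$ satisfies $g(\omega^2)=0$, while its relabelling by $j=2$, namely $g^{(2)}(x)=x+\omega^2x^{q}+\omega x^{q^2}$, is a bijection of $\F_8$. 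Thus even the multiset version of your claim would require a new argument, which is missing (and absent from the literature). The repair is the tool the paper itself uses: the adjoint, $L_f=L_{\hat f}$ with $\hat f(x)=\sum_i a_i^{q^{5-i}}x^{q^{5-i}}$ (\cite[Lemma 3.1]{CMP}, recalled in Section~\ref{s:geomprop}), which is a legitimate linear-set--preserving operation. The adjoint sends the pair $\{3,4\}$ to $\{1,2\}$, not scattered by \cite[Proposition 3.8]{Z} (this is exactly the paper's Case 1), and sends $\{1,3\}$ to $\{2,4\}$, not scattered by Proposition~\ref{notscattered}; the remaining pairs $\{1,2\}$ and $\{2,4\}$ are excluded by the same two results directly. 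With that substitution your argument closes and becomes, in essence, the paper's proof.
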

\begin{proof} 
Let $\bar{\sigma} \in \mathbb{G}$ be the map
\begin{equation*}
    \bar{\sigma}: [x_0,x_1,x_2,x_3,x_4]   \longmapsto [x^q_4,x^q_0,x^q_1,x^q_2,x^q_3].
\end{equation*}
Define $X_{(j)}=X^{\bar\sigma^j}$ for $j=0,1,2,3,4$.
Since $\bar{\sigma}$ is a generator of $\mathbb{G}$, we have $\sigma=\bar{\sigma}^i$ for some $i \in \{1,\dots,4\}$. Hence, the set of points $\{A,A_1,B,B_2\}$ is equal to
$\{\bar{A},\bar{A}_{(1)},\bar{B},\bar{B}_{(2)}\}$, with $\bar{A}=\Gamma \cap \Gamma^{\bar{\sigma}^4}$ and $\bar{B}=\Gamma \cap \Gamma^{\bar{\sigma}^3}$. 
By way of contradiction, suppose that $(\bar{A},\bar{A}_{(1)},\bar{B},\bar{B}_{(2)})$ is not a frame for $\Gamma$. Then it is enough to analyze four cases:
\begin{itemize}
\item [-] \textit{Case 1.} $\bar{B} \in \la \bar{A}, \bar{A}_{(1)} \ra$. Note that a similar result to Lemma \ref{Baligned} also holds for the points $\bar{A}$ and $\bar{B}$ defined by means of $\bar{\sigma}$; indeed, it is enough to follow the same steps of the proof, substituting $\sigma$ by $\bar\sigma$. Therefore, $\rk \bar{A}=5$.
Since the setwise stabilizer of $\Sigma$ in $\PGL(5, q^5)$ acts transitively on the
points of $\PG(4, q^5)$ of rank 5, \cite[Proposition 3.1]{BonPol}, it may be assumed that $\bar{A}$ and  $\bar{A}_{(1)}$ are $[0,1,0,0,0]$ and $[0,0,1,0,0]$, respectively. 
Moreover, since $\bar{B} \in \la \bar{A}, \bar{A}_{(1)} \ra$ and it is distinct from $\bar{A}$ and $\bar{A}_{(1)}$, w.l.o.g.\ we may  assume that $\bar{B}=[0,1,-a^{q^3},0,0]$ with $a \in \F^*_{q^5}$. 
Then
\[
\mathbb{L}=\wp_{\Gamma}(\Sigma)=\{ \la (u, u^{q^4}+au^{q^3})\ra_{\Fqc} : u \in \F^*_{q^5} \}.
\]
Next, $\mathbb L=L_{\hat f}$, where $\hat f(u)=u^{q}+a^{q^2}u^{q^2}$ is the adjoint polynomial of $f(u)=u^{q^4}+au^{q^3}$, \cite[Lemma 3.1]{CMP}.
So,
\[
\mathbb L=\{ \la (u, u^{q}+a^{q^2}u^{q^2})\ra_{\Fqc} : u \in \F^*_{q^5} \},
\]
obtaining a contradiction by \cite[Proposition 3.8]{Z}.
\item [-] $\textit{Case 2}$. $\bar{B} \in \la \bar{A} , \bar{B}_{(2)} \ra$. 
Then $\bar{A} \in \la \bar{B}, \bar{B}_{(2)} \ra$, and $\rk \bar{B} =5$, for otherwise the ten points of type $\bar{A}_{(j)}$ and $\bar{B}_{(j)}$
would belong to a common solid in $\PG(4,q^5)$. 
As before we may choose $\bar{B}_{(2)}=[0,0,0,1,0]$ and $\bar{B}=[0,1,0,0,0]$, respectively, and 
$\bar{A}=[0,-a^{q^4},0,1,0]$ with $a \in \F^*_{q^5}$. 
Then, we get
\begin{equation*}
\mathbb{L}=\{ \la (u, u^{q^2}+au^{q^4})\ra_{\Fqc} : u \in \F^*_{q^5} \},
\end{equation*}
and by Proposition \ref{notscattered}, $\mathbb{L}$ is not scattered, a contradiction.
\end{itemize}
A similar argument can be applied to the cases $\bar{B} \in \la \bar{A}_{(1)},\bar{B}_{(2)} \ra$ and $\bar{B}_{(2)} \in \la \bar{A}, \bar{A}_{(1)} \ra $ 
carrying out to a contradiction. 
Then $\mathcal{F}$ is a frame of $\Gamma$.
\end{proof}

\begin{theorem} Let $E= \la A, B \ra \cap \la A_1, B_2 \ra$ and $F = \la A, B_2 \ra \cap \la A_1, B \ra$. The linear set $\mathbb{L}$ is of LP type if and only if $E \in \la A_2,A_4 \ra $, or $F \in \la B_3, B_4 \ra $.
\end{theorem}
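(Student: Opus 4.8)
The plan is to reduce this to Theorem~\ref{LPlines} by showing that the two geometric conditions appearing here are equivalent to the intersection conditions $\la A_2,A_4\ra\cap\Gamma\neq\emptyset$ and $\la B_3,B_4\ra\cap\Gamma\neq\emptyset$ of that theorem. By Proposition~\ref{frame}, the quadruple $\mathcal F=(A,A_1,B,B_2)$ is a frame for $\Gamma$, so I may choose homogeneous coordinates on the plane $\Gamma$ in which these four points are the standard frame, say $A=[1,0,0]$, $A_1=[0,1,0]$, $B=[0,0,1]$, and $B_2=[1,1,1]$. In such coordinates the diagonal points $E=\la A,B\ra\cap\la A_1,B_2\ra$ and $F=\la A,B_2\ra\cap\la A_1,B\ra$ are the standard diagonal points of the quadrilateral, hence specific, explicitly computable points of $\Gamma$. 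The role of $E$ and $F$ is therefore to give a convenient, frame-independent way of naming a distinguished point on each relevant line.

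Next I would compute $E$ and $F$ explicitly. The line $\la A,B\ra$ joins $[1,0,0]$ and $[0,0,1]$, the line $\la A_1,B_2\ra$ joins $[0,1,0]$ and $[1,1,1]$; their intersection is $E=[1,0,1]$. Symmetrically $\la A,B_2\ra$ joins $[1,0,0]$ and $[1,1,1]$, while $\la A_1,B\ra$ joins $[0,1,0]$ and $[0,0,1]$, giving $F=[0,1,1]$. The heart of the argument is then to verify the two chains of equivalences
\[
E\in\la A_2,A_4\ra\iff \la A_2,A_4\ra\cap\Gamma\neq\emptyset,\qquad
F\in\la B_3,B_4\ra\iff \la B_3,B_4\ra\cap\Gamma\neq\emptyset.
\]
In each case the implication from left to right is trivial, since $E,F\in\Gamma$. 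For the converse I would argue that whenever the line $\la A_2,A_4\ra$ meets the plane $\Gamma$, the point of intersection is forced to be exactly $E$; and likewise that $\la B_3,B_4\ra\cap\Gamma$, when nonempty, must be $F$. Granting these equivalences, the theorem follows at once from Theorem~\ref{LPlines}, which states that $\mathbb L$ is of LP type if and only if $\la A_2,A_4\ra\cap\Gamma\neq\emptyset$ or $\la B_3,B_4\ra\cap\Gamma\neq\emptyset$.

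The main obstacle, and the step deserving the most care, is pinning down the intersection point in the converse direction. To see that $\la A_2,A_4\ra\cap\Gamma$ can only be $E$, I would use the incidence relations forced by the projecting configuration: $A_2,A_3\in\Gamma_2$ with $B_2=\Gamma\cap\Gamma_2$, so $B_2\in\la A_2,A_3\ra$; applying $\sigma^{-1}$ gives $B_1\in\la A_1,A_2\ra$, and $A_4,A\in\Gamma_4$ with $A=\Gamma\cap\Gamma_4$. These relations locate $A_2$ and $A_4$ relative to the frame points and show that the line $\la A_2,A_4\ra$, when it does meet $\Gamma$, can only do so at the diagonal point $E=\la A,B\ra\cap\la A_1,B_2\ra$. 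One clean way to organize this is to observe that $A$, $A_1$, $A_2$, $A_4$ lie in the solid $S_A=\la A,A_1,A_2,A_4\ra$ (a genuine solid by Proposition~\ref{linearlyindepedent}), so $\la A_2,A_4\ra\cap\Gamma\neq\emptyset$ forces $\Gamma\subseteq S_A$, whence $\la A_2,A_4\ra$ and $\Gamma$ meet in a determined point; tracing through the frame coordinates identifies it as $E$. The treatment of $F$ and the line $\la B_3,B_4\ra$ is entirely symmetric, interchanging the roles of the $A$-points and $B$-points and using Proposition~\ref{linearlyindepedent} for the independence of $B,B_2,B_3,B_4$.
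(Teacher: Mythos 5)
Your overall strategy coincides with the paper's: sufficiency is immediate from Theorem~\ref{LPlines} because $E,F\in\Gamma$, and the whole content of the converse is to show that \emph{if} $\la A_2,A_4\ra$ (resp.\ $\la B_3,B_4\ra$) meets $\Gamma$, then the intersection point must be exactly $E$ (resp.\ $F$). However, your argument for this key step does not hold up. The incidence relations you invoke are false: from $A_2,A_3,B_2\in\Gamma_2$ you conclude $B_2\in\la A_2,A_3\ra$, but three points lying in a common plane need not be collinear, and in fact this relation (equivalently, applying $\sigma^{-2}$, the relation $B\in\la A,A_1\ra$) directly contradicts Proposition~\ref{frame}, which asserts that no three of $A,A_1,B,B_2$ are collinear. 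The same objection kills $B_1\in\la A_1,A_2\ra$. So the relations you planned to use to locate $A_2$ and $A_4$ relative to the frame are simply not available.

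Your fallback via $S_A$ also stops short of a proof. The step $\la A_2,A_4\ra\cap\Gamma\neq\emptyset\Rightarrow\Gamma\subseteq S_A$ is fine (the intersection point $X$ cannot lie on $\la A,A_1\ra$ by Proposition~\ref{linearlyindepedent}, so $\Gamma=\la A,A_1,X\ra\subseteq S_A$), but the conclusion that ``tracing through the frame coordinates identifies it as $E$'' is an assertion, not an argument: the frame coordinates you set up live on the plane $\Gamma$, while $A_2$ and $A_4$ lie outside $\Gamma$, and nothing established so far determines how the line $\la A_2,A_4\ra$ sits relative to that frame. What is actually needed --- and what the paper does --- is to bring in the $\sigma$-action: if $X\neq E$, then $X\notin\la A,B\ra$ or $X\notin\la A_1,B_2\ra$, i.e.\ $\Gamma=\la A,B,X\ra$ or $\Gamma=\la A_1,B_2,X\ra$. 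In the first case $X^{\sigma^2}\in\la A_4,A_1\ra$, so $\Gamma_2=\la A_2,B_2,X^{\sigma^2}\ra$ is contained in the solid $S_1=\la\Gamma,A_2,A_4\ra$, forcing $\Gamma\cap\Gamma_2$ to be a line rather than the single point $B_2$; in the second case the same argument with $X^{\sigma^3}\in\la A,A_2\ra$ forces $\Gamma_3\subseteq S_1$, contradicting $\Gamma\cap\Gamma_3=\{B\}$. Without an argument of this kind, the converse direction of your proposal is unproven.
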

\begin{proof}
The sufficiency of the condition follows directly from Theorem~\ref{LPlines}.

Now assume that $\mathbb L$ is of LP type.
Then at least one among $X=\Gamma\cap\la A_2,A_4\ra$ and $Y=\Gamma\cap\la B_3,B_4\ra$ is a point.

Suppose that $X$ is a point, and $X\neq E$; that is, $\Gamma=\la A,B,X\ra$, or 
$\Gamma=\la A_1,B_2,X\ra$.
Define the solid $S_1=\la\Gamma,A_2,A_4\ra$.
If $\Gamma=\la A,B,X\ra$, then, since $X^{\sigma^2}\in\la A_4,A_1\ra\subseteq S_1$,
also $\Gamma_2=\la A_2,B_2,X^{\sigma^2}\ra$ is contained in $S_1$, hence
$\Gamma\cap \Gamma_2$ is a line: a contradiction.
Similarly, if $\Gamma=\la A_1,B_2,X\ra$, then $S_1$ contains $A_4$, $B$, and
$X^{\sigma^3}\in\la A,A_2\ra$, hence $\Gamma_3\subseteq S_1$, a contradiction again.
We reach analogous contradictions assuming $Y\neq F$.
\end{proof}

Any maximum scattered linear set in $\PG(1,q^5)$ is, up to projectivities, of type $L_{f_0}$ or $L_{f_1}$, with $f_0(x)=x^q+a_2x^{q^2}+a_3x^{q^3}+a_4x^{q^4}$ and
$f_1(x)=x^{q^2}+a_3x^{q^3}$, see \cite[Proposition 2.1]{MZ2019}.
Indeed, if $f(x)=\sum_{i=0}^4a_ix^{q^i}$ and 
$\hat f(x)=\sum_{i=0}^4a_i^{q^{5-i}}x^{q^{5-i}}$, then $L_f=L_{\hat f}$
\cite[Lemma 3.1]{CMP}.
The scattered polynomials of type $f_1(x)$ are of pseudoregulus (when $a_3=0$) or
LP (when $a_3\neq0$) type.
On the other hand, if $f(x)\in\Fqc[x]$ is an $\Fq$-linearized polynomial and
$\Sigma=\{[t,t^q,t^{q^2},t^{q^3},t^{q^4}]\colon t\in\Fqc^*\}$,
then we can explicitly show a vertex $\Gamma_f$ such that $\wp_{\Gamma_f}(\Sigma)\cong L_f$.
In particular,
\[
  \Gamma_{f_0}=\la [0,-a_4,0,0,1],[0,-a_3,0,1,0],[0,-a_2,1,0,0]\ra.
\]
For $q=3,4,5$ all planes $\Gamma_{f_0}$ have been analyzed by a \texttt{GAP} script, 
and when they give rise to MSLSs they always have either the pseudoregulus configuration
(that is, they satisfy the hypotheses of Theorem~\ref{t:CsZa20162}), or  configuration~I, or 
configuration~II.
The scripts for the cases $q=3,4$ and $q=5$ are available at \href{https://pastebin.com/GaV9PTYp}{https://pastebin.com/GaV9PTYp} and \href{https://pastebin.com/TMA5y0Ez}{https://pastebin.com/TMA5y0Ez}, respectively. 
This implies that for $q\le5$, any maximum scattered linear set in $\PG(1,q^5)$ is either of pseudoregulus or of LP type.
This result is contained in our main result, Theorem~\ref{t:summary}, which will be proved with different arguments.

\section{Characterization by equations}\label{s:equations}

In this section, we maintain the notation from the previous one. We will denote by $\mathbb{L}$ the projection $\wp_{\Gamma}(\Sigma)$  with the plane $\Gamma$ as vertex. Recall that $\mathbb{L}$ is a maximum scattered linear set not of pseudoregulus type. Moreover, with a slight abuse of notation, by  $\sigma$ we will denote both 
\begin{itemize}
\item [$i)$] a generator of the subgroup $\mathbb{G}$ of $\PGaL(5,q^5)$ fixing
$\Sigma$ pointwise;
\item [$ii)$] its underlying semilinear map with companion automorphism $x\mapsto x^{q^s}$, $1\leq s\leq 4$.  
\end{itemize}

Note that if $P \in \Sigma$, then there exists a vector $v \in \F^5_{q^5}$ such that 
$P= \la v \ra_{\Fqc}$ and $v^\sigma=v$.
\begin{proposition} \label{prop:linearcombination}
There exist $u,v \in \Fqc^5$ and $\lambda, \mu \in \F^*_{q^5}$ such that $A= \la u \ra_{\Fqc}$, $B=\la v \ra _{\Fqc}$ and
\begin{equation}\label{linearcombination}
v=u-\lambda u^{\sigma}+ \mu v^{\sigma^2}.
\end{equation}
  
If $\N_{q^5/q}(\mu)=1$ or $\N_{q^5/q}(\lambda)=1$, it is possible to choose 
$u$ and $v$ such that $\mu=1$ or $\lambda=1$, respectively.
\end{proposition}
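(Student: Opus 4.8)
The plan is to produce one linear dependence among representative vectors for the four frame points and then to normalise it by rescaling $u$ and $v$. By Proposition~\ref{frame} the points $A,A_1,B,B_2$ form a frame of the plane $\Gamma$. Fix representatives $u,v\in\Fqc^5$ with $A=\la u\ra_{\Fqc}$ and $B=\la v\ra_{\Fqc}$; applying the semilinear map $\sigma$ then yields representatives $u^\sigma$ of $A_1=A^\sigma$ and $v^{\sigma^2}$ of $B_2=B^{\sigma^2}$. All four vectors lie in the $3$-dimensional $\Fqc$-subspace underlying $\Gamma$, so they satisfy a nontrivial relation
\[
\alpha u+\beta u^\sigma+\gamma v+\delta v^{\sigma^2}=0.
\]
First I would note that the frame property forces $\alpha\beta\gamma\delta\neq0$: if one coefficient vanished, the remaining three points would be linearly dependent, hence collinear, contradicting Proposition~\ref{frame}.

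For the existence part I would absorb $\alpha$ and $\gamma$ into the representatives, keeping in mind that $\sigma$ is semilinear with companion automorphism $x\mapsto x^{q^s}$; thus replacing $u$ by $\alpha u$ scales $u^\sigma$ by $\alpha^{q^s}$ (not by $\alpha$), and replacing $v$ by $-\gamma v$ scales $v^{\sigma^2}$ by $(-\gamma)^{q^{2s}}$. After these substitutions the relation becomes
\[
v=u-\lambda u^\sigma+\mu v^{\sigma^2},\qquad \lambda=-\beta\alpha^{-q^s},\quad \mu=\delta(-\gamma)^{-q^{2s}},
\]
with $\lambda,\mu\neq0$ because $\beta,\delta\neq0$. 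This establishes \eqref{linearcombination}.

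It remains to normalise $\lambda$ or $\mu$. The key observation is that the substitution $u\mapsto cu$, $v\mapsto cv$ with a single scalar $c\in\F_{q^5}^*$ preserves the shape of \eqref{linearcombination} (the coefficients of $u$ and $v$ stay $1$ and $-1$), and acts by
\[
\lambda\longmapsto\lambda\,c^{1-q^s},\qquad \mu\longmapsto\mu\,c^{1-q^{2s}}.
\]
Hence making $\mu=1$ (resp.\ $\lambda=1$) amounts to solving $c^{q^{2s}-1}=\mu$ (resp.\ $c^{q^s-1}=\lambda$) in $\F_{q^5}^*$. Here I would use that $\sigma$ generates the order-$5$ group $\mathbb G$, so $\gcd(s,5)=1$ and therefore $\gcd(2s,5)=1$; consequently $\gcd(q^{2s}-1,q^5-1)=q-1=\gcd(q^s-1,q^5-1)$. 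Thus each of the power maps $c\mapsto c^{q^{2s}-1}$ and $c\mapsto c^{q^s-1}$ has image a subgroup of order $(q^5-1)/(q-1)$; since this image is clearly contained in $\ker\N_{q^5/q}$, and the latter is the unique subgroup of that order in the cyclic group $\F_{q^5}^*$, the two coincide. Therefore $c^{q^{2s}-1}=\mu$ is solvable precisely when $\N_{q^5/q}(\mu)=1$, and $c^{q^s-1}=\lambda$ precisely when $\N_{q^5/q}(\lambda)=1$, which is the claim.

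I expect the main obstacle to be the bookkeeping forced by semilinearity: tracking how a rescaling of $u$ and $v$ alters both the frame coefficients and the exponents $1-q^s$, $1-q^{2s}$, and then recognising that the image of the relevant power map is exactly the norm-one subgroup, a coincidence resting on $\gcd(2s,5)=1$. It is also worth recording that $\N_{q^5/q}(\lambda)$ and $\N_{q^5/q}(\mu)$ are invariant under the admissible rescalings $u\mapsto cu$, $v\mapsto cv$, so the hypotheses $\N_{q^5/q}(\mu)=1$ and $\N_{q^5/q}(\lambda)=1$ are well posed independently of the initial choice made in the existence step.
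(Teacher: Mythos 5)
Your proof is correct and follows essentially the same route as the paper: the frame property of $(A,A_1,B,B_2)$ gives a dependence relation whose coefficients are all nonzero, rescaling representatives normalizes two of them to obtain \eqref{linearcombination}, and the simultaneous rescaling $u\mapsto cu$, $v\mapsto cv$ with $c^{q^{2s}-1}=\mu$ (resp.\ $c^{q^s-1}=\lambda$) yields $\mu=1$ (resp.\ $\lambda=1$). The only difference is cosmetic: the paper invokes Hilbert~90 implicitly by writing $\mu=\rho^{\sigma^2-1}$, whereas you verify directly, via $\gcd(q^{2s}-1,q^5-1)=q-1$ and uniqueness of subgroups in a cyclic group, that the image of the power map is exactly the norm-one subgroup.
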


\begin{proof}
Let $(u,v_0) \in \F^5_{q^5} \times \F_{q^5}^5 $ such that $\la u \ra _{\Fqc}=A$ and $\la v_0 \ra_{\Fqc}=B$.
By Proposition \ref{frame}, there exist $\ell,m,n \in \Fqc^*$ such that
\begin{equation*}
v_0=\ell u + m u^\sigma + n v_0^{\sigma^2}.
\end{equation*}
Putting $v= \ell^{-1} v_0$, we have $v^{\sigma^2}= \ell^{-q^{2s}}v^{\sigma^2}_0$ and 
\begin{equation*}
v=u+m\ell^{-1} u^\sigma + n\ell^{q^{2s}-1}v^{\sigma^2}
\end{equation*}
So the first part of the statement follows. Moreover, by Hilbert's Theorem 90, if $\mathrm{N}_{q^5/q}(\mu)=1$ then $\mu=\rho^{\sigma^2-1}$ for some $\rho \in \Fqc^*$, then putting $u'=\rho u$ and  $v'= \rho v$, by \eqref{linearcombination}, we obtain
\begin{equation*}
v'=u'-\lambda' u'^\sigma + v'^{\sigma^2},\qquad\lambda'\in\Fqc^*.
\end{equation*}
A similar argument can be applied in case $\N_{q^5/q}(\lambda)=1$.
\end{proof}

Hereafter, $u,v,\lambda,\mu$ are as specified by \eqref{linearcombination}, and $u_i=u^{\sigma^i}$ and $v_i=v^{\sigma^i}$ for $i \in \{0,1,2,3,4\}$.

\begin{proposition} The following identities hold:
\begin{equation}\label{equation1}
(1-\N_{q^5/q}(\mu))v= \sum_{i=0}^4 a_i u_i\\
\end{equation}
\begin{equation}\label{equation2}
(1-\N_{q^5/q}(\lambda))u= \sum_{i=0}^4 b_i v_i
\end{equation}
where
\begin{multicols}{2}
\noindent $a_0=1-\lambda^{q^{4s}}\mu^{q^{2s}+1}$\\
$a_1=\mu^{q^{4s}+q^{2s}+1}-\lambda$\\
$a_2=\mu(1-\lambda^{q^s} \mu^{q^{4s}+q^{2s}})$\\
$a_3=\mu(\mu^{q^{4s}+q^{2s}+q^s}-\lambda^{q^{2s}})$\\
$a_4=\mu^{q^{2s}+1}(1-\lambda^{q^{3s}}\mu^{q^{4s}+q^s})$
\columnbreak

\noindent$b_0=1-\lambda^{q^{2s}+q^s+1}\mu^{q^{3s}}$\\
$b_1=\lambda(1-\lambda^{q^{3s}+q^{2s}+q^s}\mu^{q^{4s}})$\\
$b_2=\lambda^{q^s+1}-\mu$\\
$b_3=\lambda(\lambda^{q^{2s}+q^s}-\mu^{q^s})$\\
$b_4=\lambda^{q^s+1}(\lambda^{q^{3s}+q^{2s}}-\mu^{q^{2s}})$

\end{multicols}
\end{proposition}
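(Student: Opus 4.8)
The plan is to derive both identities purely by iterating the defining relation \eqref{linearcombination} under $\sigma$. The essential preliminary is that the chosen semilinear representative of $\sigma$ satisfies $\sigma^5=\mathrm{id}$: indeed $\sigma$ induces the identity projectivity on $\Sigma$ (it lies in $\mathbb G$), so $\sigma^5$ is $\Fqc$-linear and acts as a scalar on $\Fqc^5$, and since $\sigma$ fixes a representative of each point of $\Sigma$, hence a frame, that scalar is $1$. Therefore $u^{\sigma^5}=u$ and $v^{\sigma^5}=v$, which lets me read every index on $u_i,v_i$ modulo $5$; moreover $\sigma^j$ has companion automorphism $x\mapsto x^{q^{js}}$ and $\gcd(s,5)=1$ since $\sigma$ generates $\mathbb G$.

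Applying $\sigma^j$ to \eqref{linearcombination} and using semilinearity gives, for $j=0,1,2,3,4$,
\[
v_j=u_j-\lambda^{q^{js}}u_{j+1}+\mu^{q^{js}}v_{j+2}\qquad(\text{indices mod }5).
\]
To prove \eqref{equation1} I would begin from the $j=0$ relation and repeatedly eliminate the $v$-term, following the cycle $v_0\to v_2\to v_4\to v_1\to v_3\to v_0$ produced by the shift $j\mapsto j+2$; as $\gcd(2,5)=1$ this passes through each residue once and returns to $v_0$. Each substitution multiplies the running coefficient of the surviving $v$-term by one factor $\mu^{q^{js}}$, so after the five steps $v_0$ reappears on the right with coefficient $\mu^{1+q^s+q^{2s}+q^{3s}+q^{4s}}$; because $\{0,s,2s,3s,4s\}\equiv\{0,1,2,3,4\}\pmod 5$ this exponent reduces modulo $q^5-1$ to $1+q+q^2+q^3+q^4$, so the coefficient is $\N_{q^5/q}(\mu)$. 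Transposing it yields $(1-\N_{q^5/q}(\mu))v=\sum_i a_iu_i$, and reading off the coefficient of each $u_i$ accumulated along the chain produces exactly the listed $a_0,\dots,a_4$.

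Identity \eqref{equation2} is obtained symmetrically. Rewriting each relation as $u_j-\lambda^{q^{js}}u_{j+1}=v_j-\mu^{q^{js}}v_{j+2}$ and iterating along the cycle $u_0\to u_1\to\cdots\to u_4\to u_0$ given by $j\mapsto j+1$, the term $u_0$ returns with coefficient $\lambda^{1+q^s+q^{2s}+q^{3s}+q^{4s}}=\N_{q^5/q}(\lambda)$, which gives $(1-\N_{q^5/q}(\lambda))u=\sum_i b_iv_i$ after transposition; collecting the $v_i$-coefficients delivers the $b_j$.

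The computations are elementary, and the only real obstacle is the exponent bookkeeping: one must track the partial products of $\lambda^{q^{js}}$ and $\mu^{q^{js}}$ introduced at each substitution and verify that they coincide with the displayed monomials, the crucial check being that the coefficient of the returning term collapses to the full norm — precisely the point where $\gcd(s,5)=1$ enters.
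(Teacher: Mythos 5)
Your proof is correct and takes essentially the same approach as the paper: you apply $\sigma^j$ to \eqref{linearcombination} to obtain $v_j=u_j-\lambda^{q^{js}}u_{j+1}+\mu^{q^{js}}v_{j+2}$ (indices mod $5$) and compose these along the cycle $0,2,4,1,3$ to eliminate the $v$-terms until $v$ returns with coefficient $\N_{q^5/q}(\mu)$, and symmetrically along $0,1,2,3,4$ for \eqref{equation2}, which is precisely the paper's composition of \eqref{viui}. Your only additions are the explicit justification that the semilinear representative satisfies $\sigma^5=\mathrm{id}$ (so indices can be read modulo $5$) and that $\gcd(s,5)=1$ makes the returning coefficient collapse to the full norm, details the paper leaves implicit.
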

\begin{proof}
By composing
\begin{equation}\label{viui}
v_i=u_i - \lambda^{q^{is}} u_{i+1} + \mu^{q^{is}} v_{i+2},\qquad i=0,2,4,1,3,
\end{equation}
(indices taken mod $5$)
a relation containing $v$ and $u_i$, $i=0,1,2,3,4$ arises, equivalent to \eqref{equation1}.
Similarly, to obtain \eqref{equation2} compose $u_i= v_i - \mu^{q^{is}} v_{i+2} + \lambda^{q^{is}} u_{i+1}$
for $i=0,1,2,3,4$.
\end{proof}

Let take into account the basis $\mathcal{B}=\{u,u_1,u_2,u_3,v_4\}$ of $\Fqc^5$. From now on, if a vector $x \in \F_{q^5}^5$ has coordinates $(a_0, a_1, a_2, a_3, a_4)$ with respect to $\mathcal{B}$, we will denote it by  $x \equiv (a_0, a_1, a_2, a_3, a_4)_{\mathcal{B}}$.
 Then,
\begin{equation*}u_4\equiv (a,b,c,d,e)_{\cB}.
\end{equation*}
for some $a,b,c,d,e \in \F_{q^5}$.
\begin{proposition}\label{relation1}
The following equation holds:
\begin{equation}\label{howe}
  (1-\mu^{q^{4s}+q^s}\lambda^{q^{3s}})e=1-\N_{q^5/q}(\mu).
\end{equation}
\end{proposition}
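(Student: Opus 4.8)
The basis is $\mathcal B=u,u_1,u_2,u_3,v_4$, and the coordinate $e$ of $u_4$ in this basis is exactly the coefficient of $v_4$. The plan is to produce, from the defining relations \eqref{viui}, a single linear identity expressing $u_4$ (equivalently expressing one of the basis vectors) in terms of $u,u_1,u_2,u_3,v_4$, and then read off the $v_4$-coefficient. First I would use the five instances of \eqref{viui} to eliminate the vectors $v_0,v_1,v_2,v_3$ that are not in the basis, keeping $v_4$ and the $u_i$. Concretely, \eqref{viui} with $i=4$ reads $v_4=u_4-\lambda^{q^{4s}}u_0+\mu^{q^{4s}}v_1$ (indices mod $5$), which already couples $v_4$, $u_4$, $u$ and $v_1$; the remaining instances $i=0,1,2,3$ let me rewrite $v_1,v_2,v_3$ in a cascade back down to the $u_i$ and $v_4$.

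\textbf{Key steps.} The cleanest route is to take the chain of substitutions in the order $i=3,1,4,2,0$ (or any order that successively removes $v_3,v_1,v_2,v_0$), exactly mirroring the composition used to derive \eqref{equation1}. Starting from the $i=4$ relation, I substitute for $v_1$ using the $i=1$ relation $v_1=u_1-\lambda^{q^s}u_2+\mu^{q^s}v_3$, then for $v_3$ using the $i=3$ relation $v_3=u_3-\lambda^{q^{3s}}u_4+\mu^{q^{3s}}v_0$, then for $v_0$ using $v_0=u-\lambda u_1+\mu v_2$, and finally for $v_2$ using $v_2=u_2-\lambda^{q^{2s}}u_3+\mu^{q^{2s}}v_4$. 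After this cascade every $v_j$ with $j\neq4$ has been eliminated, and I am left with a relation of the form
\begin{equation*}
\alpha\,u_4=(\text{$\Fqc$-combination of }u,u_1,u_2,u_3)+\beta\,v_4,
\end{equation*}
where $\alpha$ collects the coefficients of $u_4$ accumulated along the chain and $\beta$ collects the coefficients of $v_4$. Tracking the $u_4$ terms, one contribution is the leading $u_4$ from the $i=4$ relation and the other comes through $v_1\to v_3\to(-\lambda^{q^{3s}}u_4)$ weighted by $\mu^{q^{4s}}\mu^{q^s}$, giving $\alpha=1-\mu^{q^{4s}+q^s}\lambda^{q^{3s}}$, which is precisely the factor on the left of \eqref{howe}. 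Tracking the $v_4$ terms, the only surviving contribution arrives through $v_1\to v_3\to v_0\to v_2\to\mu^{q^{2s}}v_4$, weighted by $\mu^{q^{4s}}\mu^{q^s}\mu^{q^{3s}}\mu\,\mu^{q^{2s}}=\N_{q^5/q}(\mu)$, and together with the original $v_4$ on the left-hand side this yields $\beta=\N_{q^5/q}(\mu)$. Moving everything to express $u_4$ in the basis and dividing by $\alpha$ gives the $v_4$-coordinate $e=(1-\N_{q^5/q}(\mu))/(1-\mu^{q^{4s}+q^s}\lambda^{q^{3s}})$, i.e.\ \eqref{howe}.

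\textbf{Main obstacle.} The only real difficulty is bookkeeping: the Frobenius exponents in \eqref{viui} shift by $q^{is}$ at each step, so I must be scrupulous about which power of $\lambda$ and $\mu$ multiplies each term as the substitution cascade proceeds, and about the mod-$5$ index arithmetic. The coefficient of $v_4$ being exactly the full norm $\N_{q^5/q}(\mu)=\mu^{1+q^s+q^{2s}+q^{3s}+q^{4s}}$ is the telling consistency check, since the chain visits each residue $\{0,1,2,3,4\}$ once and thus accumulates each of the five Frobenius conjugates of $\mu$ precisely once; similarly the $u_4$ coefficient must involve only the two exponents $q^{4s}$ and $q^s$ from the (shorter) path that reaches $u_4$, matching $\mu^{q^{4s}+q^s}\lambda^{q^{3s}}$. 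I would verify these two coefficient computations explicitly and treat the $u,u_1,u_2,u_3$ coefficients as irrelevant to the stated identity, so no further simplification of them is needed.
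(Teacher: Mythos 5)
Your proposal is correct and takes essentially the same route as the paper: the paper's proof is precisely your substitution cascade (its displayed identity for $u_4$ is your chain with $v_0$ and $v_2$ precomputed in coordinates via \eqref{howe1}--\eqref{howe2}), followed by comparison of the fifth (i.e.\ $v_4$-) coordinates, and your bookkeeping of the two relevant coefficients $1-\mu^{q^{4s}+q^s}\lambda^{q^{3s}}$ and $\N_{q^5/q}(\mu)$ is accurate. One small refinement: rather than ``dividing by $\alpha$'' (which may vanish, e.g.\ when $\lambda=\mu=1$), simply equate the $v_4$-coordinates with respect to the basis $\mathcal B$; this gives the product form \eqref{howe} directly and covers the degenerate case as well.
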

\begin{proof}
By repeated usage of \eqref{viui},  in coordinates with respect to the base $\mathcal{B}$, we have
\begin{align}
&v_2=u_2-\lambda^{q^{2s}}u_3+\mu^{q^{2s}} v_4\equiv (0,0,1,-\lambda^{q^{2s}},\mu^{q^{2s}})_{\mathcal{B}}\label{howe1}\\
&v=u-\lambda u_1+\mu v_2\equiv(1,-\lambda,\mu,-\lambda^{q^{2s}}\mu,\mu^{q^{2s}+1})_{\mathcal{B}},\label{howe2}
\end{align}
and
\begin{equation}\label{howe3}
u_4=v_4+\lambda^{q^{4s}}u-\mu^{q^{4s}}\left(u_1-\lambda^{q^s}u_2+\mu^{q^s}\left(
u_3-\lambda^{q^{3s}}u_4+\mu^{q^{3s}}(u-\lambda u_1+\mu v_2)
\right) \right).
\end{equation}
The assertion follows by taking into account the fifth coordinates in both expressions in \eqref{howe3}.
\end{proof}

\begin{proposition} The linear set $\mathbb{L}$ is of LP type if and only if 
\begin{equation}\label{B}
(\lambda^{q^{2s}}e+\mu^{q^{2s}}d)(1-\lambda^{q^{3s}}d-\lambda^{q^{3s}+q^{2s}}c)=0
\end{equation}
\end{proposition}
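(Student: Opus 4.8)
The plan is to translate the two geometric alternatives of Theorem~\ref{LPlines} into the coordinate equation~\eqref{B} by working throughout in the basis $\mathcal B=u,u_1,u_2,u_3,v_4$. The first step is to describe the plane $\Gamma$ as a vector subspace. Since $A=\la u\ra_{\Fqc}$, $A_1=\la u_1\ra_{\Fqc}$, $B_2=\la v_2\ra_{\Fqc}$, and by \eqref{linearcombination} $B=\la v\ra_{\Fqc}$ with $v=u-\lambda u_1+\mu v_2$, the three vectors $u,u_1,v_2$ are independent and span $\Gamma$. Using $v_2=(0,0,1,-\lambda^{q^{2s}},\mu^{q^{2s}})_{\mathcal B}$ from \eqref{howe1}, a vector $(y_0,y_1,y_2,y_3,y_4)_{\mathcal B}$ lies in $\Gamma$ precisely when
\[
  y_3=-\lambda^{q^{2s}}y_2,\qquad y_4=\mu^{q^{2s}}y_2.
\]
These two linear forms are the test to be applied to the two lines occurring in Theorem~\ref{LPlines}.

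Next I would treat configuration~I, i.e.\ $\la A_2,A_4\ra\cap\Gamma\neq\emptyset$. With $A_2=\la u_2\ra_{\Fqc}=\la(0,0,1,0,0)_{\mathcal B}\ra_{\Fqc}$ and $A_4=\la u_4\ra_{\Fqc}=\la(a,b,c,d,e)_{\mathcal B}\ra_{\Fqc}$, a general point of the line reads $(\beta a,\beta b,\alpha+\beta c,\beta d,\beta e)_{\mathcal B}$. Imposing the two defining equations of $\Gamma$ forces $\beta\neq0$ (otherwise the point reduces to $A_2\notin\Gamma$), so one may set $\beta=1$; the conditions become $d=-\lambda^{q^{2s}}(\alpha+c)$ and $e=\mu^{q^{2s}}(\alpha+c)$, and eliminating the free parameter $\alpha+c$ shows that a common point exists if and only if $\lambda^{q^{2s}}e+\mu^{q^{2s}}d=0$, the first factor of \eqref{B}.

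For configuration~II I would first compute $B_3=\la v_3\ra_{\Fqc}$. From \eqref{viui} with $i=3$ (indices mod $5$, so $v_5=v$) together with \eqref{howe2},
\[
  v_3=u_3-\lambda^{q^{3s}}u_4+\mu^{q^{3s}}v,
\]
whose $\mathcal B$-coordinates are obtained by substituting $u_3=(0,0,0,1,0)_{\mathcal B}$, $u_4=(a,b,c,d,e)_{\mathcal B}$ and $v=(1,-\lambda,\mu,-\lambda^{q^{2s}}\mu,\mu^{q^{2s}+1})_{\mathcal B}$. Writing $B_4=\la v_4\ra_{\Fqc}=\la(0,0,0,0,1)_{\mathcal B}\ra_{\Fqc}$, a general point $\gamma v_3+\delta v_4$ of $\la B_3,B_4\ra$ again requires $\gamma\neq0$; the equation $y_4=\mu^{q^{2s}}y_2$ merely fixes $\delta$, so the binding constraint is $y_3=-\lambda^{q^{2s}}y_2$. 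After the term $-\lambda^{q^{2s}}\mu^{q^{3s}+1}$ cancels on both sides, this collapses to $1-\lambda^{q^{3s}}d-\lambda^{q^{3s}+q^{2s}}c=0$, the second factor of \eqref{B}.

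Finally, Theorem~\ref{LPlines} asserts that $\mathbb L$ is of LP type if and only if at least one of the two lines meets $\Gamma$, i.e.\ if and only if one of the two factors vanishes, which is exactly the vanishing of the product in \eqref{B}. The computations are routine once the coordinates are fixed; the only steps requiring care are the correct evaluation of the $\mathcal B$-coordinates of $v_3$ and the systematic exclusion of the degenerate parameter choices ($\beta=0$ and $\gamma=0$), which would otherwise yield spurious ``intersections'' lying outside $\Gamma$.
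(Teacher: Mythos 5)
Your proposal is correct and follows essentially the same route as the paper: both invoke Theorem~\ref{LPlines}, pass to the basis $\mathcal B=u,u_1,u_2,u_3,v_4$ with the coordinates \eqref{howe1}--\eqref{howe2}, compute $v_3=u_3-\lambda^{q^{3s}}u_4+\mu^{q^{3s}}v$, and reduce each configuration to a linear condition in $c,d,e$. The only difference is bookkeeping — the paper expresses the two intersection conditions as vanishing $2\times2$ determinants (linear dependence of $u,u_1,v_2,u_2,u_4$, resp.\ $u,u_1,v_2,v_3,v_4$), while you parametrize the lines and eliminate, which yields the same two factors of \eqref{B}.
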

\begin{proof}
By Theorem \ref{LPlines}, the linear set $\mathbb{L}$ is of LP type if and only if $u,u_1,v_2,u_2,u_4$ or $u,u_1,v_2,v_3,v_4$ are linearly dependent.
From \eqref{howe2},
\begin{equation}
\begin{aligned}
&v_3=u_3-\lambda^{q^{3s}}u_4+\mu^{q^{3s}} v \equiv\\
&(-\lambda^{q^{3s}}a+\mu^{q^{3s}},-\lambda^{q^{3s}}b-\lambda\mu^{q^{3s}},-\lambda^{q^{3s}}c+\mu^{q^{3s}+1},1-\lambda^{q^{3s}}d-\lambda^{q^{2s}}\mu^{q^{3s}+1},\\
&-\lambda^{q^{3s}}e+\mu^{q^{3s}+q^{2s}+1})_{\mathcal{B}}.
\end{aligned}
\end{equation}
So, $\mathbb{L}$ is of LP type if and only if one of the following determinants is zero:
\begin{equation}
\begin{vmatrix}
-\lambda^{q^{2s}} & \mu^{q^{2s}}\\
d & e 
\end{vmatrix},
\hspace{1cm}
\begin{vmatrix}
1 & -\lambda^{q^{2s}}\\

-\lambda^{q^{3s}}c+\mu^{q^{3s}+1} & 1- \lambda^{q^{3s}}d-\lambda^{q^{2s}}\mu^{q^{3s}+1}
\end{vmatrix}.
\end{equation}
\end{proof}

\begin{remark}
Consider 
\begin{equation}
v_3=u_3-\lambda^{q^{3s}}u_4+\mu^{q^{3s}}v
\end{equation}
and
\begin{equation}
v_3=\mu^{-q^s}(v_1-u_1+\lambda^{q^s}u_2)=\mu^{-q^{4s}-q^s}(v_4-u_4+\lambda^{q^{4s}}u)-\mu^{-q^s}u_1+\lambda^{q^s}\mu^{-q^s}u_2.
\end{equation}
By difference we get that the vector 
\begin{equation*}
(\lambda^{q^{3s}}-\mu^{-q^{4s}-q^s})u_4
\end{equation*}
has coordinates in $\mathcal{B}$
\begin{gather}
(\mu^{q^{3s}}-\lambda^{q^{4s}}\mu^{-q^{4s}-q^s},-\lambda\mu^{q^{3s}}+\mu^{-q^s},\mu^{q^{3s}+1}-\lambda^{q^s}\mu^{-q^s},
1-\lambda^{q^{2s}}\mu^{q^{3s}+1},\nonumber
\\ \mu^{q^{3s}+q^{2s}+1}-\mu^{-q^{4s}-q^s})_{\mathcal{B}}. \label{coord-u4}
\end{gather}
In case $\lambda\mu^{q^{3s}+q^s}-1 \neq 0$, this allows to obtain the coordinates of $u_4$
in terms of $\lambda$ and $\mu$. 
\end{remark}

Next, we find relations between the parameters $a$, $b$, $c$, $d$, $e$, $\lambda$ and $\mu$ based on the fact that
the semilinear map $\sigma^i$, $i=0,1,2,3,4$, acts on the coordinates as $X\mapsto M_i X^{q^{is}}$.
It holds
\begin{gather}
M_3=\begin{pmatrix}0&a&1&0&0\\ 0&b&0&1&0\\ 0&c&0&0&1\\ 1&d&0&0&-\lambda^{q^{2s}}\\
0&e&0&0&\mu^{q^{2s}}\end{pmatrix},\label{M3}\\
M_2=\begin{pmatrix}0&0&a&1&\mu^{-q^{4s}}(\lambda^{q^{4s}}-a)\\ 0&0&b&0&-\mu^{-q^{4s}}b \label{M2}\\
1&0&c&0&-\mu^{-q^4}c\\ 0&1&d&0&-\mu^{-q^{4s}}d\\ 0&0&e&0&\mu^{-q^{4s}}(1-e)\end{pmatrix}.
\end{gather}
By equating the second columns of $M_2M_3^{q^{2s}}$ and the identity matrix one obtains
the following equations, where $r=c^{q^{2s}}-\mu^{-q^{4s}}e^{q^{2s}}$:
\begin{equation}\label{eq23a}
\sistlin{ar+d^{q^{2s}}+\lambda^{q^{4s}}\mu^{-q^{4s}}e^{q^{2s}}&=&0\\
br&=&1\\
cr+a^{q^{2s}}&=&0\\
dr+b^{q^{2s}}&=&0\\
er+\mu^{-q^{4s}}e^{q^{2s}}&=&0.}
\end{equation}
If $\rk A=5$, or, equivalently, $e\neq0$, the system above is equivalent to
\begin{equation}\label{eq23b}
\sistlin{a&=&\mu^{-q^{2s}-1}e^{1-q^s}(e^{q^s}-1)\\
b&=&-\mu^{q^{4s}}e^{1-q^{2s}}\\
c&=&\mu^{-q^{2s}}e^{1-q^{3s}}(e^{q^{3s}}-1)\\
d&=&-\mu^{q^{4s}+q^s}e^{1-q^{4s}}.}
\end{equation}

\section{\texorpdfstring{Classification in the case $\max \{\rk A,\rk B\}=5$}{Classification in the case max {rk A,rk B\}=5}}}\label{s:class}

Let us define the following linear set:
\[
L_{\alpha,\beta,s}=\{\langle (x-\alpha x^{q^{2s}},x^{q^s}-\beta x^{q^{2s}})\rangle_{\F_{q^5}} \colon x \in \F_{q^5}^*\}\ (\alpha,\beta\in\F_{q^5},\  s\in\{1,2,3,4\}).
\]

The result stated in \cite[Proposition 2.5]{MZ2019} holds more in general for any $1\leq s \leq 4$. More precisely,
\begin{proposition}\label{p:gen1}\,
\begin{itemize}
\item[$(i)$] The linear set $L_{\alpha,\beta,s}$ has rank less than five if and only if
\begin{equation}
\alpha^{q^s} = \beta^{q^s+1} \,\,\textnormal{and}\,\,
  \N_{q^5/q}(\alpha)=\N_{q^5/q}(\beta)=1.  
\end{equation}

\item [$(ii)$] If $\alpha^{q^s} \not = \beta^{q^s+1}$, then $L_{\alpha,\beta,s}$ is not of pseudoregulus type.
\item [$(iii)$] If $\alpha^{q^s} = \beta^{q^s+1}$ and $(\N_{q^5/q}(\alpha)$, $\N_{q^5/q}(\beta)) \not= (1, 1)$, then $L_{\alpha,\beta,s}$ is of pseudoregulus type.
\end{itemize}
\end{proposition}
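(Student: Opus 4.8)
The plan is to read the rank of $L_{\alpha,\beta,s}$ off a kernel dimension and to settle the pseudoregulus question through $\Fqc$-spans of linearized polynomials. Write $g_1(x)=x-\alpha x^{q^{2s}}$ and $g_2(x)=x^{q^s}-\beta x^{q^{2s}}$, so that $L_{\alpha,\beta,s}=L_U$ with $U$ the image of the $\Fq$-linear map $\phi\colon x\mapsto(g_1(x),g_2(x))$. Since $\rk L_{\alpha,\beta,s}=\dim_{\Fq}U=5-\dim_{\Fq}\ker\phi$, the rank drops below five precisely when $g_1$ and $g_2$ have a common nonzero root, which turns $(i)$ into a root computation.

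For the forward direction of $(i)$, a common root $x_0\neq0$ gives $\alpha=x_0^{1-q^{2s}}$ and $\beta=x_0^{q^s-q^{2s}}$; reading exponents modulo $q^5-1$ yields at once $\N_{q^5/q}(\alpha)=\N_{q^5/q}(\beta)=1$ and $\alpha^{q^s}=x_0^{q^s-q^{3s}}=\beta^{q^s+1}$. For the converse I would first use $\N_{q^5/q}(\alpha)=1$ together with $\gcd(2s,5)=1$: the map $x\mapsto x^{q^{2s}-1}$ is then a bijection of $\Fqc^*$ onto the norm-one subgroup (Hilbert 90), so $g_1$ has a nonzero root $x_0$. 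It remains to force $g_2(x_0)=0$, i.e.\ $\beta=x_0^{q^s-q^{2s}}$. Setting $\gamma=\beta\,x_0^{q^{2s}-q^s}$, the identity $\alpha^{q^s}=\beta^{q^s+1}$ gives $\gamma^{q^s+1}=1$, while $\N_{q^5/q}(\gamma)=\N_{q^5/q}(\beta)=1$; from $\gamma^{q^s}=\gamma^{-1}$ the norm evaluated along the $q^s$-Frobenius orbit collapses to $\gamma\cdot\gamma^{-1}\cdot\gamma\cdot\gamma^{-1}\cdot\gamma=\gamma$, whence $\gamma=1$ and $x_0$ is a common root.

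For $(ii)$ I would phrase ``pseudoregulus type'' as a span condition. The set $L_{\alpha,\beta,s}$ is of pseudoregulus type iff $U=M\{(w,w^{q^t})\colon w\in\Fqc\}$ for some $M\in\GL(2,q^5)$ and some $t$ with $\gcd(t,5)=1$, the field-automorphism part of $\PGaL(2,q^5)$ being absorbable into $M$ after reparametrising the source. Matching parametrisations, this is equivalent to the existence of a bijective $\Fq$-linear $P$ with $\langle g_1,g_2\rangle_{\Fqc}=\langle P,P^{[t]}\rangle_{\Fqc}$, where $P^{[t]}(x):=(P(x))^{q^t}$. The space $V=\langle g_1,g_2\rangle_{\Fqc}$ consists of the polynomials $\mu x+\nu x^{q^s}-(\mu\alpha+\nu\beta)x^{q^{2s}}$, hence is supported on the three distinct residues $\{0,s,2s\}\bmod 5$. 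Requiring $P\in V$ and $P^{[t]}\in V$ is then pure support bookkeeping: a full-support $P$ is impossible since $\{0,s,2s\}+t\not\subseteq\{0,s,2s\}$ for $t\not\equiv0$, so $P$ has at most two terms, and running through the admissible two-term shapes the only survivors are $P\propto x-(\alpha/\beta)x^{q^s}$ with $t=s$ and $P\propto g_2$ with $t=4s$; in each, the membership $P^{[t]}\in V$ forces exactly $\alpha^{q^s}=\beta^{q^s+1}$. Taking the contrapositive proves $(ii)$.

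For $(iii)$ I would exhibit the equivalence directly. Assuming $\alpha^{q^s}=\beta^{q^s+1}$ with $(\N_{q^5/q}(\alpha),\N_{q^5/q}(\beta))\neq(1,1)$, the relation $\N_{q^5/q}(\alpha)=\N_{q^5/q}(\beta)^2$ reduces the hypothesis to $\N_{q^5/q}(\beta)\neq1$. Put $P(x)=x-(\alpha/\beta)x^{q^s}$; then $(\alpha/\beta)^{q^s}=\alpha^{q^s}/\beta^{q^s}=\beta$ gives $P^{[s]}=g_2$, and $g_1=P+(\alpha/\beta)g_2$, so $P$ and $g_2$ span $V$ with transition matrix $M=\left(\begin{smallmatrix}1&\alpha/\beta\\0&1\end{smallmatrix}\right)$. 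Moreover $\N_{q^5/q}(\alpha/\beta)=\N_{q^5/q}(\beta)\neq1$ makes $P$ bijective, whence $U=M\{(w,w^{q^s})\colon w\in\Fqc\}$ and $L_{\alpha,\beta,s}$ is of pseudoregulus type. The main obstacle is the support/constraint enumeration in $(ii)$, namely certifying that no shift $t$ beyond the two listed can occur; the degenerate cases $\alpha\beta=0$ (one coordinate already a monomial) should be checked separately, and they fall in line with $(ii)$ since there $\alpha^{q^s}=\beta^{q^s+1}$ holds only when $\alpha=\beta=0$, the literal pseudoregulus.
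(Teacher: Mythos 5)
Parts (i) and (iii) of your proposal are correct and complete. In (i), the kernel criterion, the forward exponent computation, Hilbert 90 applied to the generator $x\mapsto x^{q^{2s}}$, and the collapse $\N_{q^5/q}(\gamma)=\gamma$ coming from $\gamma^{q^s}=\gamma^{-1}$ all work (one small slip: $x\mapsto x^{q^{2s}-1}$ is a $(q-1)$-to-one surjection of $\Fqc^*$ onto the norm-one subgroup, not a bijection, but only surjectivity is used). In (iii), writing $g_2=P^{[s]}$ and $g_1=P+(\alpha/\beta)P^{[s]}$ with $P(x)=x-(\alpha/\beta)x^{q^s}$, and noting that $\N_{q^5/q}(\alpha/\beta)=\N_{q^5/q}(\beta)\neq 1$ makes $P$ bijective, is a clean direct argument. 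Bear in mind that the paper prints no proof at all: it asserts that the arguments of \cite[Proposition 2.5]{MZ2019} extend to all $s$, and those arguments run through the projecting configuration of the preceding proposition together with the geometric criterion of Theorem~\ref{t:CsZa20162}; so your treatment is a genuinely different, purely polynomial route.

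Part (ii), however, has a genuine gap at its opening move, and it is exactly the pitfall this paper is built around. You claim that $L_{\alpha,\beta,s}$ is of pseudoregulus type if and only if $U=M\{(w,w^{q^t})\colon w\in\Fqc\}$ for some $M\in\GL(2,q^5)$ and $\gcd(t,5)=1$, justifying only the absorption of the field automorphism into $M$. The ``if'' direction is trivial, but the ``only if'' direction identifies a property of the \emph{point set} with a property of its \emph{defining subspace}. ``Of pseudoregulus type'' means $L_U=L_{MU_t}$ as point sets, and this does not yield $U=MU_t$, because the pseudoregulus point set of $\PG(1,q^5)$ is not simple: $U_1=\{(x,x^{q})\colon x\in\Fqc\}$ and $U_2=\{(x,x^{q^{2}})\colon x\in\Fqc\}$ define the same point set $\{\la(1,y)\ra_{\Fqc}\colon\N_{q^5/q}(y)=1\}$ but are not $\GaL(2,q^5)$-equivalent (the $\GaL$-class of this point set is two, see \cite{CMP}). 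Consequently your support bookkeeping --- which is itself correct: full support is excluded since $3t\not\equiv 0\pmod 5$, $P\propto g_1$ is never admissible, and the two surviving shapes force $\alpha^{q^s}=\beta^{q^s+1}$ --- proves only that $U$ is not $\GaL$-equivalent to any $U_t$, which is a priori weaker than the assertion. To close the gap you must invoke the classification of \emph{all} subspaces defining the pseudoregulus point set, namely that each of them is $\GaL$-equivalent to some $U_t$; this is the determination of the $\GaL$-class of linear sets of pseudoregulus type in \cite{CMP}, building on \cite{CsZa20161}. Alternatively, stay at the point-set level as \cite{MZ2019} does: a non-scattered linear set is trivially not of pseudoregulus type, and for a scattered one, Theorem~\ref{t:CsZa20162} applied to the projecting configuration exhibited in the preceding proposition (whose vertex intersections $\Gamma\cap\Gamma^{\sigma^i}$ are points exactly when $\alpha^{q^s}\neq\beta^{q^s+1}$) gives (ii) without ever passing to defining subspaces.
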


We point out that the results of \cite[Section 4]{MZ2019} hold true more generally by replacing $\alpha^q$  with $\alpha^{q^s}$ and
$\beta^{q}$ with $\beta^{q^s}$ in their arguments. 
We state here two of them in this general setting.

\begin{proposition}
\label{fulemma41}
Let $\alpha,\beta\in\Fqc$ with $(\alpha,\beta)\neq(0,0)$.
Then $L_{\alpha,\beta,s}$ is maximum scattered if and only if there is no $z\in\Fqc$ such that
\[
\begin{cases}
  \N_{q^5/q}(z)=-1\\
  \beta^{q^{3s}+q^s+1}z^{q^s}-\beta^{q^s+1}z^{q^{2s}+q^s+1}(1-\alpha z)^{q^{3s}}+\beta^{q^{3s}}(1-\alpha z)^{q^s+1}=0.
\end{cases}
\]
\end{proposition}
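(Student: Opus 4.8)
The plan is to characterize when $L_{\alpha,\beta,s}$ is maximum scattered by directly analyzing the scattered condition~\eqref{e:sp} for the polynomial defining the linear set. Since $L_{\alpha,\beta,s}$ has the form $\{\langle(x-\alpha x^{q^{2s}},x^{q^s}-\beta x^{q^{2s}})\rangle\colon x\in\Fqc^*\}$, it fails to be scattered precisely when there exist $\Fq$-linearly independent $y,z\in\Fqc^*$ whose images are $\Fqc$-proportional; equivalently, taking the ratio of the two coordinate functions, the map $x\mapsto (x^{q^s}-\beta x^{q^{2s}})/(x-\alpha x^{q^{2s}})$ takes some value on a pair of $\Fq$-independent arguments. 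The first step is to reduce this to the existence of a nonzero $u\in\Fqc$ with $\N_{q^5/q}(u)=-1$ together with a second polynomial equation: the idea is that two points $\langle(P_1(x),P_2(x))\rangle$ and $\langle(P_1(\lambda x),P_2(\lambda x))\rangle$ coincide, for $\lambda\notin\Fq$, exactly when a suitable norm condition holds.

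Concretely, I would set up the failure of the scattered property as a $2\times2$ determinant vanishing. Writing $y$ and $z$ for two arguments whose images are proportional, the determinant
\[
\begin{vmatrix}
y-\alpha y^{q^{2s}} & y^{q^s}-\beta y^{q^{2s}}\\
z-\alpha z^{q^{2s}} & z^{q^s}-\beta z^{q^{2s}}
\end{vmatrix}=0
\]
must hold for some $\Fq$-independent $y,z$. Substituting a parametrization $z=uy$ (or better, exploiting the semilinear group $\mathbb G$ to normalize) converts this into a condition on $u=z/y$ alone, up to a norm constraint. The crucial arithmetic fact I expect to need is that the pseudoregulus-type and scattered conditions force $\N_{q^5/q}(u)=-1$ to emerge from pairing the coordinate functions; this norm value is characteristic of the $s$-th power structure in dimension five and should drop out of applying the Frobenius $x\mapsto x^{q^s}$ repeatedly and tracking how the two coordinates transform. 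Once $u$ is isolated, clearing denominators in the determinant and substituting $y$-independent quantities should produce exactly the displayed polynomial relation $\beta^{q^{3s}+q^s+1}u^{q^s}-\beta^{q^s+1}u^{q^{2s}+q^s+1}(1-\alpha u)^{q^{3s}}+\beta^{q^{3s}}(1-\alpha u)^{q^s+1}=0$.

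The main obstacle I anticipate is the bookkeeping of Frobenius exponents and the correct elimination that collapses the two-variable determinant condition into the single equation in $u$ with the stated exponents $q^{3s}+q^s+1$, $q^{2s}+q^s+1$, and $q^s+1$. These exponents are not symmetric, so the reduction cannot be a naive substitution; it must respect the asymmetry between the $x^{q^s}$ and $x^{q^{2s}}$ terms. I would handle this by writing the proportionality of $(y-\alpha y^{q^{2s}},\,y^{q^s}-\beta y^{q^{2s}})$ and $(z-\alpha z^{q^{2s}},\,z^{q^s}-\beta z^{q^{2s}})$ as the existence of a scalar, then eliminating that scalar and the overall factor of $y$ by dividing through by an appropriate power of $y$ and setting $u=z/y$. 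The norm condition $\N_{q^5/q}(u)=-1$ should appear as the consistency requirement that the eliminated scalar lies in $\Fqc$ and that $y,z$ are genuinely $\Fq$-independent rather than proportional over $\Fq$.

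Finally, since this proposition is explicitly stated as a generalization of \cite[Proposition 2.4]{MZ2019} obtained by replacing $\alpha^q,\beta^q$ with $\alpha^{q^s},\beta^{q^s}$, I would verify that every step in the original argument goes through verbatim under this substitution, using that the parameter $s$ with $(s,5)=1$ makes $x\mapsto x^{q^s}$ a generator of $\Gal(\Fqc/\Fq)$ just as $x\mapsto x^q$ is. In effect, the proof is a transport of the $s=1$ computation through the field automorphism $x\mapsto x^{q^s}$; the only genuinely new check is that the norm $\N_{q^5/q}$ is invariant under this automorphism (it is, being a product over the full Galois orbit), so the norm condition $\N_{q^5/q}(u)=-1$ is unaffected by the passage from $s=1$ to general $s$.
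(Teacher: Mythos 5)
Your closing paragraph is, in substance, the paper's entire proof: the paper offers no argument for this proposition beyond the remark that the results of \cite[Section 4]{MZ2019} remain valid when $\alpha^q,\beta^q$ are replaced by $\alpha^{q^s},\beta^{q^s}$, and the justification you give for that transport --- since $(s,5)=1$ the map $x\mapsto x^{q^s}$ is again a generator of $\Gal(\Fqc/\Fq)$, while $\N_{q^5/q}$ is a product over the full Galois orbit and hence independent of the chosen generator --- is exactly the (unstated) reason why that remark is legitimate. One small correction: the result being transported is the scatteredness criterion in \cite[Section 4]{MZ2019} (Lemma 4.1 there), not \cite[Proposition 2.4]{MZ2019}, which is the source of the earlier proposition describing the shape $L_{\alpha,\beta,s}$.

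The direct derivation sketched in your first three paragraphs, however, stalls exactly where the content of the lemma lies, and the key step would fail as described. Writing the failure of scatteredness as the vanishing of your determinant and substituting $z=uy$ gives
\begin{equation*}
y^{q^s+1}(u^{q^s}-u)-\beta\, y^{q^{2s}+1}(u^{q^{2s}}-u)+\alpha\, y^{q^{2s}+q^s}(u^{q^{2s}}-u^{q^s})=0,
\end{equation*}
in which the three powers of $y$, namely $q^s+1$, $q^{2s}+1$, $q^{2s}+q^s$, are pairwise distinct; so no division by a power of $y$ removes $y$, and the condition stays genuinely two-variable. Dividing by $y^{q^s+1}$ and setting $v=y^{q^s-1}$ (by Hilbert 90 for the generator $x\mapsto x^{q^s}$, such $v$ ranges exactly over the elements of norm one) leaves $(u^{q^s}-u)-\beta v^{q^s}(u^{q^{2s}}-u)+\alpha v^{q^s+1}(u^{q^{2s}}-u^{q^s})=0$ together with $\N_{q^5/q}(v)=1$, and it is the elimination of the auxiliary norm-one variable $v$ that constitutes the real work of the cited lemma: that elimination is what produces both the constraint $\N_{q^5/q}(u)=-1$ and the particular exponents $q^{3s}+q^s+1$, $q^{2s}+q^s+1$, $q^s+1$ of the displayed trinomial. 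The mechanisms you propose cannot produce them: the proportionality scalar automatically lies in $\Fqc$, and the $\Fq$-independence of $y,z$ says only that $z/y\notin\Fq$; neither is a norm condition, and the norm of the ratio $z/y$ is in any case constrained only up to fifth powers of $\Fq^*$ (rescale $z$). So, taken as a self-contained argument, your sketch has a genuine gap at its central step; taken as a transport of the lemma of \cite[Section 4]{MZ2019}, your proposal is correct and coincides with the paper's own proof.
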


\begin{proposition}\label{+generale}
Let $\alpha, \beta \in \F_{q^5}$ with $\beta \not = 0$, $1 \leq s \leq 4$ and $\alpha^{q^s}/\beta^{q^s+1} \in \F_{q^5} \setminus \F_q$. Then $L_{\alpha,\beta,s}$ is not maximum scattered.
\end{proposition}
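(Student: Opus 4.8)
The plan is to show that $L_{\alpha,\beta,s}$ fails to be scattered, which suffices, since a non-scattered linear set is certainly not maximum scattered. First I would record the harmless reductions. The hypothesis $\alpha^{q^s}/\beta^{q^s+1}\notin\Fq$ forces $\alpha\neq0$ (otherwise the ratio is $0\in\Fq$) and, in particular, $\alpha^{q^s}/\beta^{q^s+1}\neq1$; hence the equality $\alpha^{q^s}=\beta^{q^s+1}$ of Proposition~\ref{p:gen1}$(i)$ fails, so $L_{\alpha,\beta,s}$ has rank five and is scattered if and only if it is maximum scattered. Thus it is enough to exhibit a witness of non-scatteredness.

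Next I would convert non-scatteredness into a concrete $q$-polynomial condition. Two nonzero parameters $x_1,x_2$ determine the same point of $L_{\alpha,\beta,s}$ exactly when the coincidence equation $D_1-\beta D_2+\alpha D_1^{q^s}=0$ holds, where $D_k=x_1x_2^{q^{ks}}-x_2x_1^{q^{ks}}$; a direct expansion shows the $\alpha\beta$-terms cancel and leave precisely this. Writing $w=x_2/x_1$ and $y=x_1^{q^s-1}$ (so $\N_{q^5/q}(y)=1$, and $y$ ranges over all norm-one elements as $x_1$ varies), the equation becomes $L_y(w)=0$ for the $\Fq$-linear $q$-polynomial $L_y(w)=(\beta y^{q^s}-1)w+(1-\alpha y^{q^s+1})w^{q^s}+(\alpha y^{q^s+1}-\beta y^{q^s})w^{q^{2s}}$. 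Since its three coefficients sum to zero, $L_y$ vanishes on $\Fq$ and factors as $L_y=M_y\circ(w^{q^s}-w)$ with $M_y(z)=(1-\beta y^{q^s})z+(\alpha y^{q^s+1}-\beta y^{q^s})z^{q^s}$. As $w\mapsto w^{q^s}-w$ has kernel $\Fq$ and image the trace-zero hyperplane, $L_{\alpha,\beta,s}$ is non-scattered iff, for some norm-one $y$, the $q$-binomial $M_y$ has a nonzero root that is trace-zero (equivalently, of the form $w^{q^s}-w$ with $w\notin\Fq$). This is exactly the witness demanded by Proposition~\ref{fulemma41}.

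The core of the argument is to produce such a pair $(y,z)$ from $\alpha^{q^s}/\beta^{q^s+1}\notin\Fq$. A nonzero $z$ with $M_y(z)=0$ exists iff $\N_{q^5/q}\!\big((\beta y^{q^s}-1)/(y^{q^s}(\alpha y-\beta))\big)=1$, in which case $z$ is determined up to an $\Fq^*$-multiple, so trace-zeroness is an all-or-nothing condition on that root. Substituting the Hilbert~90 parametrization $y=c^{q^s}/c$ of the norm-one torus together with $z=w^{q^s}-w$ for the trace-zero hyperplane turns the requirement into the existence of a suitable point, with prescribed norm/trace behaviour, on an explicit plane curve $\mathcal C$ attached to $(\alpha,\beta)$. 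I would finish exactly as in the proof of Proposition~\ref{notscattered}: invoke the point-existence results of \cite{Z}, the hypothesis $\alpha^{q^s}/\beta^{q^s+1}\notin\Fq$ being what guarantees $\mathcal C$ is of the non-degenerate type to which those results apply. In the excluded case $\alpha^{q^s}/\beta^{q^s+1}\in\Fq$ the curve splits off the trivial component $w\in\Fq$ and no genuine witness survives, consistently with $L_{\alpha,\beta,s}$ then being potentially scattered.

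The main obstacle is this last step: showing that $\mathcal C$ carries the required point for \emph{every} $q$ and every admissible $(\alpha,\beta)$. Concretely one must (a) pin down precisely how $\alpha^{q^s}/\beta^{q^s+1}\notin\Fq$ yields the absolute-irreducibility and genus hypotheses needed to apply \cite{Z}; (b) rule out that the only solutions are the spurious ones with $w\in\Fq$ (and the degenerate value $\alpha y=\beta$, where $M_y$ is not a genuine binomial); and (c) ensure the located root of $M_y$ is truly trace-zero rather than merely satisfying the norm condition that produces some root. Everything else is routine bookkeeping with $q$-polynomials and norms; the real content lies in the point count on $\mathcal C$.
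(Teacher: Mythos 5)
Your preliminary reductions are correct: the coincidence equation $D_1-\beta D_2+\alpha D_1^{q^s}=0$, the passage to $L_y(w)=0$ with $y$ ranging over the norm-one elements, the factorization $L_y=M_y\circ(w\mapsto w^{q^s}-w)$, and the norm criterion for $M_y$ to have a nontrivial kernel all check out, and it is indeed enough to exhibit non-scatteredness. But what you dismiss as ``routine bookkeeping'' is the easy half of the argument, and the step you defer --- producing, for \emph{every} $q$ and every admissible pair $(\alpha,\beta)$, a norm-one $y$ whose associated binomial $M_y$ has a kernel generated by a trace-zero element --- is the entire content of the proposition, and your proposal does not prove it. The appeal to the ``point-existence results of \cite{Z}'' does not work as stated: Propositions 3.8 and 3.10 of \cite{Z}, which are what the proof of Proposition~\ref{notscattered} uses, are tailored to a single $q$-binomial $x^{q^2}+bx^{q^4}$, i.e.\ to curves of the specific shape $b^{-1}x^qy+y^{q^2}x+xy=0$; your situation is instead a two-parameter family in which the coefficients of $M_y$ depend on a constrained parameter $y$, the root must satisfy an additional trace condition, and the coefficients $\alpha,\beta$ lie in $\F_{q^5}$ rather than $\Fq$, so the resulting variety is not even defined over $\Fq$. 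To make a Hasse--Weil-type argument run one would have to carry out the Weil-restriction (Moore-matrix) reduction as in Section~\ref{s:conj}, show the resulting variety over $\Fq$ is a curve, prove its absolute irreducibility precisely from the hypothesis $\alpha^{q^s}/\beta^{q^s+1}\notin\Fq$, bound its genus, and treat small $q$ separately; none of this is done, and your own items (a)--(c) concede exactly that.

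For comparison, the paper does not reprove this statement at all: it imports it from \cite{MZ2019} (Proposition 4.3 there, the case $s=1$), observing that the arguments of \cite[Section 4]{MZ2019} carry over verbatim under the substitution of $\alpha^q,\beta^q$ by $\alpha^{q^s},\beta^{q^s}$ --- legitimate because $\gcd(s,5)=1$, so $x\mapsto x^{q^s}$ generates $\Gal(\F_{q^5}/\Fq)$ and all norm, trace and Hilbert~90 facts are unchanged. So your plan stalls exactly where the cited proof does its work; to repair it you must either carry out the full curve analysis in the style of Section~\ref{s:conj} (including the irreducibility lemmas and the small-$q$ verification), or simply cite \cite{MZ2019} as the paper does.
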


Next, we shall describe the linear set $\mathbb{L}$ using the basis of $\F^5_{q^5}$, $\cB=\{u,u_1,u_2,u_3,v_4\}$ introduced in Section \ref{s:equations} and we will prove that, under the assumption that at least one between $A$ and $B$ has rank 5, $\mathbb{L}$ is of LP type. 
Note that for a generator $\sigma$ of $\mathbb{G}$, the assumption $\max \{\rk A, \rk B\}=5$, where $A= \Gamma \cap \Gamma^{\sigma^4}$ and $B = \Gamma \cap \Gamma^{\sigma^3}$, is equivalent to the existence of $\tau  \in \mathbb{G}$, $\tau\neq1$, such that $\rk A'=5$ with $A'= \Gamma \cap \Gamma^{\tau^4}$ and $A'  \in \{A,B\}$. 
For this reason, it is enough to deal with the case $\rk A=5$.
So, suppose that point $A=\Gamma \cap \Gamma^{\sigma^4}$ has rank $5$. Since $\mathbb{L}$ is scattered, for any two distinct points 
$P_i=\la(x^{(i)}_0,x^{(i)}_1,x^{(i)}_2,x^{(i)}_3,x^{(i)}_4)_{\cB}\ra_{\Fqc}$,
$i=1,2$, in $\Sigma$, the points $A$, $A_1$, $B_2$, $P_1$, $P_2$ are independent;
equivalently, by \eqref{howe1},
\[
\begin{vmatrix} 1&-\lambda^{q^{2s}}&\mu^{q^{2s}}\\ x_2^{(1)}&x_3^{(1)}&x_4^{(1)}\\
x_2^{(2)}&x_3^{(2)}&x_4^{(2)}\end{vmatrix}\neq0.
\]
This implies that the following linear set is scattered:
\begin{equation}\label{fgeneraleA}
  L=\{\la(\mu^{q^{2s}} x_2-x_4,\mu^{q^{2s}}x_3+\lambda^{q^{2s}}x_4)\ra_{\Fqc}\colon
  \la(x_0,\ldots,x_4)_\cB\ra_{\Fqc}\in\Sigma\}.
\end{equation}
The linear set $L$ is the projection of $\Sigma$ from $\Gamma$ to the line $x_0=x_1=x_4=0$.
Therefore, $L\cong\mathbb L=\wp_\Gamma(\Sigma)$.

By  Proposition \ref{prop:linearcombination}, it may be assumed that either $\mu=1$, or $\N_{q^5/q}(\mu)\neq1$ and we have to  distinguish  two cases, that are described in the following propositions.
As in the previous section, let $u_4\equiv(a,b,c,d,e)_{\mathcal B}$, and the semilinear map $\sigma^i$, $i=0,1,2,3,4$, acts on the coordinates as $X\mapsto M_i X^{q^{is}}$.
\begin{proposition}
    If $\mu=1$, then $\mathbb L$ is projectively equivalent to $L_{1,1-e,s}$, and $e\in\Fq$.
\end{proposition}
\begin{proof}
By \eqref{howe}, one has $\lambda=1$ and
\[
a=e-e^{1-q^s},\ b=-e^{1-q^{2s}},\ c=e-e^{1-q^{3s}},\ d=-e^{1-q^{4s}}.
\]
The condition \eqref{B} is equivalent to
\begin{equation}\label{LPcaso2}
(e-1)(e-e^{1-q^{3s}}-e^{1-q^{4s}}-1)=0.
\end{equation}
The equation $M_1X^{q^s}=X$ with $x_4=t$ gives
\[
x_2=e^{-q^{3s}}(t^{q^{3s}}-t)+t,\quad x_3=e^{-q^{4s}}(t^{q^{4s}}-t).
\]
From \eqref{fgeneraleA}, by previous multiplication of the components for $e^{q^{3s}}$ and $e^{q^{4s}}$,
respectively, and by setting $t=x^{q^{2s}}$,
\begin{equation*}
\mathbb L\cong L_{1,1-e^{q^s},s}=\{\la(x-x^{q^{2s}},x^{q^s}-(1-e^{q^s})x^{q^{2s}})\ra_{\Fqc}\colon x \in\Fqc^*\}.
\end{equation*}
By Proposition~\ref{+generale}, $1/(1-e^{q^s})^{q^s+1}\in\Fq$; this implies $1-e^{q^s}\in\Fq$, that is, $e\in\Fq$. In conclusion,
\begin{equation}\label{e:formae}
\mathbb L\cong L_{1,1-e,s}=\{\la(x-x^{q^{2s}},x^{q^s}-(1-e)x^{q^{2s}})\ra_{\Fqc}\colon x \in\Fqc^*\},\quad e\in\Fq.
\end{equation}
\end{proof}

\begin{proposition}
    If $\N_{q^5/q}(\mu) \not = 1$, then $\mathbb L$ is projectively equivalent to $L_{\alpha,\beta,s}$,
where 
\begin{equation}\label{alfbet} 
\alpha=\mu^{-q^{2s}},\quad 
\beta=\frac{\lambda^{q^{2s}}-\mu^{q^{4s}+q^{2s}+q^s}}{\mu^{q^{2s}}(\lambda^{q^{2s}}\mu^{q^{3s}+1}-1)}.
\end{equation}
\end{proposition}
\begin{proof}
Since 
\begin{equation}
M_1=M_3M_3^{q^{3s}}=\begin{pmatrix}0&ab^{q^{3s}}+c^{q^{3s}}&0&a&1\\ 1&b^{q^{3s}+1}+d^{q^{3s}}&0&b&-\lambda\\
0&b^{q^{3s}}c+e^{q^{3s}}&0&c&\mu\\ 0&a^{q^{3s}}+b^{q^{3s}}d-\lambda^{q^{2s}}e^{q^{3s}}&1&d&-\lambda^{q^{2s}}\mu\\
0&b^{q^{3s}}e+\mu^{q^{2s}}e^{q^{3s}}&0&e&\mu^{q^{2s}+1}\end{pmatrix}, \label{M1}
\end{equation}
and $M_1u^\sigma_1=u_2$, one has
\begin{equation}\label{coord-e}
\begin{cases}
&ab^{q^{3s}}+c^{q^{3s}}=0\\
&b^{q^{3s}+1}+d^{q^{3s}}=0\\
&b^{q^{3s}}c+e^{q^{3s}}=1\\
&a^{q^{3s}}+b^{q^{3s}}d-\lambda^{q^{2s}}e^{q^{3s}}=0\\
&b^{q^{3s}}e+\mu^{q^{2s}}e^{q^{3s}}=0.
\end{cases}
\end{equation}
By  $e\neq0$, solving \eqref{coord-e},
\begin{gather*}
a=\lambda^{q^{4s}}e-\mu^{q^{4s}+q^{3s}+q^s}e^{-q^s+1},\ b=-\mu^{q^{4s}}e^{-q^{2s}+1},\\
c=\lambda^{q^s}\mu^{q^{4s}}e-\mu^{q^{4s}+q^{3s}+q^s+1}e^{-q^{3s}+1},\ d=-\mu^{q^{4s}+q^s}e^{-q^{4s}+1},
\end{gather*}
and
\begin{equation}
e=\frac{1-\N_{q^5/q}(\mu)}{1-\lambda^{q^{3s}}\mu^{q^{4s}+q^s}},
\end{equation}
cf. \eqref{howe}.
The equation $M_1X^{q^s}=X$ with $x_4=t$ gives
\begin{align*}
&\mu^{q^{2s}}x_2-t=(1-\N_{q^5/q}(\mu))^{-1}(1-\lambda^{q^s}\mu^{q^{4s}+q^{2s}})(\mu^{q^{2s}}t^{q^{3s}}-t),\\
&\mu^{q^{2s}}x_3+\lambda^{q^{2s}}t=(1-\N_{q^5/q}(\mu))^{-1}\left((\mu^{q^{2s}}-\lambda^{q^{2s}}\mu^{q^{3s}+q^{2s}+1})
t^{q^{4s}}+(\lambda^{q^{2s}}-\mu^{q^{4s}+q^{2s}+q^s})t\right).
\end{align*}
By \eqref{howe}, $\lambda\mu^{q^{3s}+q^s}\neq1$ and
this leads to \eqref{alfbet}.
\end{proof}

\begin{proposition}\label{p:ein}
If $\N_{q^5/q}(\mu)\neq1$, then both $\rho=\mu/\lambda^{q^s+1}$ and 
\begin{equation}\label{howe3case3}  
e=\frac{1-\rho^5\N_{q^5/q}(\lambda)^2}{1-\rho^2\N_{q^5/q}(\lambda)}
\end{equation}
are elements of $\Fq$ with $1 \leq s \leq 4$.
\end{proposition}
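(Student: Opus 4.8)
The plan is to isolate the real content of the statement, which is the single assertion $\rho=\mu/\lambda^{q^s+1}\in\F_q$, and then to obtain the displayed formula for $e$ as a formal consequence. First I would rewrite the two ingredients of \eqref{howe} in terms of $\rho$ and $N:=\N_{q^5/q}(\lambda)\in\F_q$. Substituting $\mu=\rho\lambda^{q^s+1}$ and collecting the $\lambda$-powers into the norm gives $\lambda^{q^{3s}}\mu^{q^{4s}+q^s}=\rho^{q^{4s}+q^s}N$ and $\N_{q^5/q}(\mu)=\N_{q^5/q}(\rho)N^2$, so \eqref{howe} becomes $(1-\rho^{q^{4s}+q^s}N)e=1-\N_{q^5/q}(\rho)N^2$. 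Once $\rho\in\F_q$ is established, $\rho^{q^{4s}+q^s}=\rho^2$ and $\N_{q^5/q}(\rho)=\rho^5$, and this identity collapses to $e=\frac{1-\rho^5N^2}{1-\rho^2N}$, which lies in $\F_q$ since $\rho,N\in\F_q$; the denominator is nonzero by \eqref{howe} because $\N_{q^5/q}(\mu)\neq1$.

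The crux is therefore $\rho\in\F_q$. Here I would exploit that $\mathbb{L}=L_{\alpha,\beta,s}$ with $\alpha,\beta$ given by \eqref{alfbet}. In the generic situation $\beta\neq0$, and since $\mathbb{L}$ is maximum scattered, Proposition~\ref{+generale} forces $\alpha^{q^s}/\beta^{q^s+1}\in\F_q$; this is the only external input I need. Substituting $\mu=\rho\lambda^{q^s+1}$ into \eqref{alfbet}, the key simplification is that every power of $\lambda$ reorganizes into $N$ and cancels in the ratio $\alpha^{q^s}/\beta^{q^s+1}$, leaving a rational expression purely in $\rho$ and $N$. Writing $\rho_i=\rho^{q^{is}}$, a direct computation yields
\[
\frac{\alpha^{q^s}}{\beta^{q^s+1}}=\rho_2\,\frac{(\rho_0\rho_3N-1)(\rho_1\rho_4N-1)}{(\rho_0\rho_2\rho_3N-1)(\rho_1\rho_2\rho_4N-1)}.
\]

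I would then impose that this quantity be fixed by $x\mapsto x^{q^s}$, i.e. invariant under the shift $\rho_i\mapsto\rho_{i+1}$ (indices mod $5$). After cancelling the common factors $\rho_1\rho_4N-1$ and $\rho_0\rho_2\rho_3N-1$, which are nonzero (the first by \eqref{howe} together with $\N_{q^5/q}(\mu)\neq1$, the second because $\beta\neq0$), the cross-multiplied identity simplifies, and the miracle of the computation is that it factors as
\[
(\rho_3-\rho_2)\bigl(\rho_0\rho_1\rho_2\rho_3\rho_4N^2-1\bigr)=0,
\]
that is, $(\rho^{q^{3s}}-\rho^{q^{2s}})(\N_{q^5/q}(\mu)-1)=0$, using $\N_{q^5/q}(\rho)N^2=\N_{q^5/q}(\mu)$. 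Since $\N_{q^5/q}(\mu)\neq1$ by hypothesis, I conclude $\rho^{q^{3s}}=\rho^{q^{2s}}$, hence $\rho^{q^s}=\rho$ and $\rho\in\F_q$, which closes the argument together with the first paragraph.

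The main obstacle is purely computational: carrying out the cancellation of the $\lambda$-powers and, above all, verifying that the Galois-invariance condition collapses to the clean factored form above, where the appearance of the factor $\N_{q^5/q}(\mu)-1$ is precisely what lets the hypothesis $\N_{q^5/q}(\mu)\neq1$ do its work. The only loose end is the degenerate locus $\beta=0$, equivalently $\rho^{q^s+q^{2s}+q^{4s}}N=1$, where Proposition~\ref{+generale} does not apply; there $\mathbb{L}$ is a binomial linear set, necessarily of LP type since it is not of pseudoregulus type (Proposition~\ref{p:gen1}), and $\rho\in\F_q$ must be extracted by a separate short direct argument.
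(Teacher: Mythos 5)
Your proposal is, in substance, the paper's own proof: both arguments rest on Proposition~\ref{+generale} applied to the pair $(\alpha,\beta)$ of \eqref{alfbet} to force $\alpha^{q^s}/\beta^{q^s+1}\in\Fq$, and both then obtain \eqref{howe3case3} by substituting $\mu=\rho\lambda^{q^s+1}$ into \eqref{howe}, with the same observation that the denominator $1-\rho^2\N_{q^5/q}(\lambda)$ cannot vanish because $\N_{q^5/q}(\mu)\neq1$. The only mechanical difference is that you introduce $\rho$ \emph{before} imposing Galois invariance, whereas the paper computes the condition $(\beta^{q^s+1}/\alpha^{q^s})^{q^s-1}=1$ directly in $\lambda,\mu$ and factors it as $(\N_{q^5/q}(\mu)-1)(\lambda^{q^{4s}}\mu^{q^{2s}}-\lambda^{q^{2s}}\mu^{q^{3s}})=0$; your factorization $(\rho_3-\rho_2)\bigl(\N_{q^5/q}(\mu)-1\bigr)=0$ is exactly this identity rewritten in the variable $\rho$ (note $\lambda^{q^{4s}}\mu^{q^{2s}}=\lambda^{q^{2s}}\mu^{q^{3s}}$ is equivalent to $\rho^{q^{2s}}=\rho^{q^{3s}}$). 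I checked your displayed formula for $\alpha^{q^s}/\beta^{q^s+1}$, the expansion, and the nonvanishing of the two cancelled factors; all are correct.

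The one genuine omission is the case $\beta=0$, which you explicitly defer to ``a separate short direct argument'' without giving it; moreover the detour through LP type there is unnecessary, since identifying the linear set plays no role. The case is a one-liner, and it is precisely how the paper opens its proof: $\beta=0$ means $\lambda^{q^{2s}}=\mu^{q^{4s}+q^{2s}+q^s}$, so raising to the power $q^{3s}$ gives $\lambda=\mu^{q^{4s}+q^{2s}+1}$, whence
\[
\lambda^{q^s+1}=\mu^{q^{4s}+q^{3s}+q^{2s}+q^s+1}\,\mu=\mu\,\N_{q^5/q}(\mu),
\qquad
\rho=\frac{\mu}{\lambda^{q^s+1}}=\N_{q^5/q}(\mu)^{-1}\in\Fq .
\]
(Alternatively, starting from your own form $\rho^{q^s+q^{2s}+q^{4s}}\N_{q^5/q}(\lambda)=1$: since $\N_{q^5/q}(\lambda)\in\Fq$, applying $x\mapsto x^{q^s}$ twice and comparing with the original relation yields $\rho^{q^{2s}}=\rho^{q^{3s}}$, hence $\rho\in\Fq$.) With this line inserted, your argument is complete and coincides with the paper's.
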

\begin{proof}
Let $\alpha$ and $\beta$ be as in \eqref{alfbet}.
If $\beta=0$, then $\lambda^{q^s+1}=\mu\N_{q^5/q}(\mu)$; so, $\rho=\N_{q^5/q}(\mu)^{-1}$.
Otherwise $(\beta^{q^s+1}/\alpha^{q^s})^{q^s-1}=1$ by Proposition~\ref{+generale}.
This is equivalent to 
\[  \frac{\mu^{q^{4s}+q^{2s}}(\lambda^{q^{4s}}-\mu^{q^{4s}+q^{3s}+q^s})(\lambda^{q^{2s}}\mu^{q^{3s}+1}-1)}
{\mu^{q^{4s}+q^{3s}}(\lambda^{q^{4s}}\mu^{q^{2s}+1}-1)(\lambda^{q^{2s}}-\mu^{q^{4s}+q^{2s}+q^s})}=1
\]
and to
$(\N_{q^5/q}(\mu)-1)(\lambda^{q^{4s}}\mu^{q^{2s}}-\lambda^{q^{2s}}\mu^{q^{3s}})=0$,
hence $\lambda^{q^{2s}-1}=\mu^{q^s-1}$.
Finally, Formula \eqref{howe3case3} can be obtained by substitution of $\mu=\rho\lambda^{q^s+1}$ in
\eqref{howe}.
\end{proof}

As seen in \eqref{viui}, $v_4=u_4-\lambda^{q^{4s}}u+\mu^{q^{4s}}v_1$ and this leads to
coordinates $[-a+\lambda^{q^{4s}},-b,-c,-d,1-e]$ of $B_1$ with respect to the basis $\mathcal{B}=\{u,u_1,u_2,u_3,v_4\}$.

In view of Proposition~\ref{p:ein}, if $\N_{q^5/q}(\mu)\neq1$, then \eqref{eq23b} become
\begin{equation}\label{e:23c}
\sistlin{a&=&\mu^{-q^{2s}-1}(e-1)\\
b&=&-\mu^{q^{4s}}\\
c&=&\mu^{-q^{2s}}(e-1)\\
d&=&-\mu^{q^{4s}+q^s}.}
\end{equation}

This implies the following result.
\begin{proposition}\label{p:A3B1}
If $\N_{q^5/q}(\mu)\neq1$, then
the line $A_3B_1$ meets $\Gamma$ in one point.
\end{proposition}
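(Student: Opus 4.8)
The plan is to carry out the intersection $A_3B_1\cap\Gamma$ explicitly in coordinates with respect to the basis $\cB=u,u_1,u_2,u_3,v_4$, since the coordinates of all relevant objects are already at hand. First I would record the equations of $\Gamma$. By Proposition~\ref{frame} the points $A,A_1,B_2$ span the plane $\Gamma$; now $A=\la u\ra_{\Fqc}$ and $A_1=\la u_1\ra_{\Fqc}$ have coordinates $(1,0,0,0,0)_\cB$ and $(0,1,0,0,0)_\cB$, while by \eqref{howe1} the point $B_2=\la v_2\ra_{\Fqc}$ has coordinates $(0,0,1,-\lambda^{q^{2s}},\mu^{q^{2s}})_\cB$. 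Hence $\Gamma$ is the locus
\[
x_3+\lambda^{q^{2s}}x_2=0,\qquad x_4-\mu^{q^{2s}}x_2=0.
\]

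Next I would parametrise the line. The point $A_3=\la u_3\ra_{\Fqc}$ has coordinates $(0,0,0,1,0)_\cB$, and the coordinates $[\,\lambda^{q^{4s}}-a,\,-b,\,-c,\,-d,\,1-e\,]_\cB$ of $B_1$ recorded immediately before the statement give a general point $s_1A_3+s_2B_1$ of $A_3B_1$ the coordinates
\[
\bigl(s_2(\lambda^{q^{4s}}-a),\,-s_2b,\,-s_2c,\,s_1-s_2d,\,s_2(1-e)\bigr)_\cB.
\]
Imposing the two equations of $\Gamma$ then yields
\[
s_1=s_2\bigl(d+\lambda^{q^{2s}}c\bigr),\qquad s_2\bigl((1-e)+\mu^{q^{2s}}c\bigr)=0.
\]

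At this point I would invoke the hypothesis $\N_{q^5/q}(\mu)\neq1$, which makes the solved value $c=\mu^{-q^{2s}}(e-1)$ of \eqref{e:23c} available. Substituting gives $\mu^{q^{2s}}c=e-1$, so the second equation collapses to the identity $0=0$ and the solution set is precisely the scalar multiples of $(s_1,s_2)=\bigl(d+\lambda^{q^{2s}}c,\,1\bigr)$, determining a single projective point. To rule out that the whole line lies in $\Gamma$, I would note $A_3\notin\Gamma$, since for $A_3$ one has $x_3+\lambda^{q^{2s}}x_2=1\neq0$; thus $A_3B_1$ is not contained in $\Gamma$ and meets it in at most one point. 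Combining the two observations, $A_3B_1$ meets $\Gamma$ in exactly one point.

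I do not expect a genuine obstacle here: once the equations of $\Gamma$ and the coordinates of $B_1$ are in place, the proof is a short bookkeeping computation. The only points requiring care are reading the defining equations of $\Gamma$ correctly off the frame $A,A_1,B_2$, respecting the sign conventions in \eqref{howe1} and in the coordinates of $B_1$, and using the distinctness of the ten points $A_i,B_i$ established earlier to guarantee $A_3\neq B_1$, so that the line $A_3B_1$ is indeed well defined.
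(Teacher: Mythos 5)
Your proof is correct and follows essentially the same route as the paper: both arguments work in coordinates with respect to the basis $\cB=u,u_1,u_2,u_3,v_4$, use \eqref{howe1} for $B_2$ and the recorded coordinates of $B_1$, and reduce the claim to the identity $\mu^{q^{2s}}c=e-1$ supplied by \eqref{e:23c} under the hypothesis $\N_{q^5/q}(\mu)\neq1$ (the paper phrases this as the vanishing of the $2\times2$ dependence determinant for $A,A_1,A_3,B_1,B_2$, while you parametrize the line and solve the system). Your explicit check that $A_3\notin\Gamma$, so the line is not contained in $\Gamma$ and the intersection is exactly one point, is a small detail the paper leaves implicit.
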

\begin{proof}
Taking into account \eqref{howe1},
the condition of dependence for $A$, $A_1$, $A_3$, $B_1$, $B_2$
is 
\[ \begin{vmatrix}-c&1-e\\ 1&\mu^{q^{2s}}\end{vmatrix}=0. \]
Then the thesis follows from \eqref{e:23c}.
\end{proof}

\subsection{An algebraic condition for being non-LP type}
Until the end of this subsection we will assume that $\mathbb L$ is not of LP type as well as not of pseudoregulus type. 
Also, we will assume that $\rk A=5$.
This allows us to take $[x_0,x_1,x_2,x_3,x_4]$ as homogeneous coordinates of a point $\la\sum x_iu_i\ra_{\Fqc}$.

\begin{proposition}\label{p:almost}
  The linear set $\mathbb L$ is projectively equivalent to
\begin{equation}\label{e:formak}
  \{\la(x-k^{-1}x^{q^{2s}},x^{q^s}-\delta k^{q^{4s}+q^{2s}}x^{q^{2s}})\ra_{\Fqc}\colon
  x\in\Fqc^*\}
\end{equation}
for some $k\in\Fqc^*$ and $\delta\in\Fq$;
that is, $\mathbb L\cong L_{\alpha,\beta,s}$ with
$\alpha=k^{-1}$, $\beta=\delta k^{q^{4s}+q^{2s}}$.
\end{proposition}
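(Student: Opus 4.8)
The plan is to notice that the two cases already singled out just above, namely $\mu=1$ and $\N_{q^5/q}(\mu)\neq1$, have each delivered $\mathbb L$ in the shape $L_{\alpha,\beta,s}$ with $\alpha\neq0$. Since $\alpha\neq0$ I may set $k=\alpha^{-1}\in\Fqc^*$ and $\delta=\beta\alpha^{q^{4s}+q^{2s}}$; then $\alpha=k^{-1}$ and $\beta=\delta\,k^{q^{4s}+q^{2s}}$, which is precisely the form \eqref{e:formak}. Thus the entire proposition reduces to the single claim
\[
\delta:=\beta\,\alpha^{q^{4s}+q^{2s}}\in\Fq .
\]
This is the scalar that must be shown to descend to $\Fq$, and everything else is bookkeeping.

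In the case $\mu=1$ the claim is immediate from \eqref{e:formae}: there $\alpha=1$ and $\beta=1-e$ with $e\in\Fq$, so $k=1$ and $\delta=1-e\in\Fq$. The substantial case is $\N_{q^5/q}(\mu)\neq1$, where $\alpha=\mu^{-q^{2s}}$ and $\beta$ is given by \eqref{alfbet}. First I would record, using $\mu\in\Fqc$ and $q^{5s}\equiv1$, the reductions $\alpha^{q^{2s}}=\mu^{-q^{4s}}$ and $\alpha^{q^{4s}}=\mu^{-q^{s}}$, so that $\delta=\beta\,\mu^{-q^{s}-q^{4s}}$. Then I would invoke Proposition~\ref{p:ein} to write $\mu=\rho\lambda^{q^s+1}$ with $\rho\in\Fq$ and substitute this throughout. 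Writing $N=\N_{q^5/q}(\lambda)\in\Fq$ and using $\mu^{q^{js}}=\rho\lambda^{q^{(j+1)s}+q^{js}}$, a direct simplification collapses the numerator $\lambda^{q^{2s}}-\mu^{q^{4s}+q^{2s}+q^s}$ to $\lambda^{q^{2s}}(1-\rho^3N)$, the factor $\lambda^{q^{2s}}\mu^{q^{3s}+1}-1$ to $\rho^2N-1$, and $\mu^{q^{2s}}$ to $\rho\lambda^{q^{3s}+q^{2s}}$, after which all surviving powers of $\lambda$ combine into a single $N^{-1}$.

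The outcome I expect is the closed form
\[
\delta=\frac{1-\rho^3N}{\rho^3N(\rho^2N-1)},
\]
a rational expression in $\rho,N\in\Fq$, whence $\delta\in\Fq$, as required. No separate treatment of $\beta=0$ is needed: if $\beta=0$ the numerator already vanishes and $\delta=0\in\Fq$ trivially, while if $\beta\neq0$ the denominator is nonzero because $\beta$ is well defined by \eqref{alfbet}, so the formula makes sense.

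The only genuine obstacle is the exponent bookkeeping in this last step: one must track every power of $q^s$ modulo $5$ and confirm that, after the substitution $\mu=\rho\lambda^{q^s+1}$, each remaining power of $\lambda$ reassembles exactly into $N=\lambda^{1+q^s+q^{2s}+q^{3s}+q^{4s}}$, leaving no factor in $\Fqc\setminus\Fq$. Because $\rho\in\Fq$ is fixed by every conjugation ($\rho^{q^{js}}=\rho$) and $N\in\Fq$, once the cancellation of the $\lambda$-powers is verified the membership $\delta\in\Fq$ follows with no further input; this cancellation is exactly the content of the relation \eqref{howe} used to produce $e$ in Proposition~\ref{p:ein}, so the ingredients are all already in place.
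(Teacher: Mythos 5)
Your proposal is correct, but it concludes the main case by a genuinely different route than the paper. For $\N_{q^5/q}(\mu)\neq1$ the paper argues geometrically: it introduces the solid $S=\la\Gamma,A_3\ra$ with equation $x_4=kx_2$, uses Proposition~\ref{p:A3B1} to place $B_1$ in $S$, exploits the non-LP hypothesis to show the coordinates $b_1,b_3$ of $B$ are nonzero, extracts $\delta\in\Fq^*$ from a rank (norm) condition, and finally projects $\Sigma$ from $\la A,A_1,B_2\ra$ to land on \eqref{e:formak}. You instead reduce everything to the scalar claim $\delta:=\beta\alpha^{q^{4s}+q^{2s}}\in\Fq$ and verify it by substituting $\mu=\rho\lambda^{q^s+1}$ (Proposition~\ref{p:ein}) into \eqref{alfbet}; your exponent bookkeeping is right: $\mu^{q^{4s}+q^{2s}+q^s}=\rho^3N\lambda^{q^{2s}}$, $\lambda^{q^{2s}}\mu^{q^{3s}+1}=\rho^2N$ and $\mu^{q^s+q^{4s}}=\rho^2N\lambda^{-q^{3s}}$, so indeed
\[
\delta=\frac{1-\rho^3N}{\rho^3N\,(\rho^2N-1)}\in\Fq,
\]
with the denominator nonzero because $\lambda\mu^{q^{3s}+q^s}=\rho^2N\neq1$ by \eqref{howe} (this, rather than the ``cancellation'' itself, is where \eqref{howe} enters — a minor imprecision in your last paragraph). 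Your argument is shorter, gives a closed formula for $\delta$ in terms of the $\Fq$-invariants $\rho$ and $N$, and does not even need the non-LP hypothesis in this case; what the paper's longer geometric proof buys is the explicit coordinates of $B$, and hence of $B_3$, $B_4$ and $F$ (cf.\ \eqref{e:coof}), together with $\delta\neq0$, all of which are reused immediately in the proof of Theorem~\ref{t:almost}, so with your route that subsequent proof would have to be reorganized.
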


\begin{proof}
\textsc{Case $\N_{q^5/q}(\mu)\neq1$.} 
Let  $S=\la\Gamma,A_3\ra$.
The point $A_4$ does not belong to $S$, for otherwise $S$ would also contain 
$\Gamma_4=\la A,A_4,B_1\ra$.
By a similar argument $S$ does not contain $A_2$.
Then the equation of $S$ is $x_4=kx_2$, for some  $k\neq0$ in $\F_{q^5}$.

Let $B=[b_0,b_1,b_2,b_3,b_4]$.
By Proposition~\ref{p:A3B1}, $B_1\in S$.
The algebraic conditions for $B,B_1,B_2\in S$, and $B_2\in \la A,A_1,B\ra$
are $b_4=kb_2$, $b_3^{q^s}=kb_1^{q^s}$, $b_2^{q^{2s}}=kb_0^{q^{2s}}$, and
\[
\rk \begin{pmatrix} b_2&k^{q^{4s}}b_1&kb_2\\ b_0^{q^{2s}}&b_1^{q^{2s}}&b_2^{q^{2s}}\end{pmatrix}=1
\]
respectively.
The coordinates $b_1$ and $b_3$ are nonzero, for otherwise $B\in\la A,A_2,A_4\ra$,
and $\mathbb L$ is of LP type; furthermore, $b_0b_2b_4\neq0$, for otherwise
$A_3\in A_1 B\subseteq\Gamma$, a contradiction.
The condition on the rank implies $b_0=k^{-q^{3s}}b_2$ and $k^{q^{4s}-1}b_2^{q^{2s}-1}=b_1^{q^{2s}-1}$,
hence $b_1=\delta k^{q^{2s}+1}b_2$ for some $\delta\in\F_q^*$.
Therefore $B=[k^{-q^{3s}},\delta k^{q^{2s}+1},1,\delta k^{q^{4s}+q^{2s}+1},k]$, and
\begin{equation}\label{e:coof}
F=AB_2\cap A_1B=[k^{-q^{3s}-1},k^{q^{2s}},k^{-1},\delta k^{q^{4s}+q^{2s}},1].
\end{equation}
By projecting a point $[x^{q^{3s}},x^{q^{4s}},x,x^{q^s},x^{q^{2s}}]$ of $\Sigma$ from the vertex
\[
  \la A,A_1,B_2\ra=
\la[1,0,0,0,0],[0,1,0,0,0],
[*,*,k^{-1},\delta k^{q^{4s}+q^{2s}},1]\rangle\]
onto a complementary line, one obtains \eqref{e:formak}.

\noindent
\textsc{Case $\mu=1$.} 
The linear set
$\mathbb L$ is described in \eqref{e:formae}, which is equivalent to \eqref{e:formak} for $k=1$, $\delta=1-e$.
\end{proof}

\begin{theorem}\label{t:almost}
  Let $\mathbb L$ as described in Proposition~\ref{p:almost}. If $\varepsilon=\N_{q^5/q}(k)$, then $\delta^2\varepsilon\neq1$,
  $\delta^3\varepsilon^2+(1-3\delta)\varepsilon+1\neq0$, and no $x\in\Fqc$ exists
  satisfying
  \begin{equation}\label{e:sctness}
\begin{cases}
  \varepsilon\N_{q^5/q}(x)=-1\\
  \delta^2\varepsilon x^{q^s}-\delta\varepsilon x^{q^{2s}+q^s+1}(1-x)^{q^{3s}}+(1-x)^{q^s+1}
  =0.
\end{cases}
\end{equation}
\end{theorem}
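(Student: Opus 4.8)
The statement is largely a bookkeeping consequence of what precedes it: Proposition~\ref{p:almost} already delivers the normal form \eqref{e:formak}, so the real task is to extract the three side conditions from the standing hypotheses that $\mathbb L$ is maximum scattered, not of pseudoregulus type, not of LP type, and has $\rk A=5$. I would fix the notation $\alpha=k^{-1}$, $\beta=\delta k^{q^{4s}+q^{2s}}$, $\varepsilon=\N_{q^5/q}(k)$, and record at the outset that $(s,5)=1$ (since $\sigma$ has order five), so that $1+q^s+q^{2s}+q^{3s}+q^{4s}$ is the full norm exponent; this is the identity I use repeatedly to collapse products of powers of $k$ into $\varepsilon$, together with $k^{q^{5s}}=k$.

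For the first inequality I would invoke Proposition~\ref{p:gen1}. Since $\mathbb L$ is maximum scattered its rank is five, so part $(i)$ rules out $\alpha^{q^s}=\beta^{q^s+1}$ with both norms equal to one, while part $(iii)$, together with the hypothesis that $\mathbb L$ is not of pseudoregulus type, rules out $\alpha^{q^s}=\beta^{q^s+1}$ with the norms not both one. Hence $\alpha^{q^s}\neq\beta^{q^s+1}$. Now $\alpha^{q^s}=k^{-q^s}$ and $\beta^{q^s+1}=\delta^2k^{1+q^{2s}+q^{3s}+q^{4s}}$ (using $\delta\in\Fq$), so the inequality reads $1\neq\delta^2k^{1+q^s+q^{2s}+q^{3s}+q^{4s}}=\delta^2\varepsilon$, which is exactly $\delta^2\varepsilon\neq1$.

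The non-existence of solutions of \eqref{e:sctness} is the scatteredness criterion of Proposition~\ref{fulemma41} transported to the new coordinates. First one notes $\delta\neq0$: if $\delta=0$ then $\beta=0$, the displayed polynomial of Proposition~\ref{fulemma41} vanishes identically, and since the norm attains the value $-1$ the set would fail to be scattered. With $\delta\neq0$ I substitute the bijection $u=kx$. Then $\alpha u=x$, so $\N_{q^5/q}(u)=-1$ becomes $\varepsilon\N_{q^5/q}(x)=-1$; inserting $\alpha,\beta$ into the second equation and reducing all exponents of $k$, every monomial picks up the common factor $\delta k^{1+q^{2s}}\neq0$, and dividing it out leaves precisely $\delta^2\varepsilon x^{q^s}-\delta\varepsilon x^{q^{2s}+q^s+1}(1-x)^{q^{3s}}+(1-x)^{q^s+1}=0$. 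Thus solutions of the system of Proposition~\ref{fulemma41} correspond bijectively to solutions of \eqref{e:sctness}, and scatteredness gives the third assertion.

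The middle inequality $\delta^3\varepsilon^2+(1-3\delta)\varepsilon+1\neq0$ is the delicate point, and is where I expect the main difficulty. A short computation translating the LP criterion \eqref{B} through the values \eqref{e:23c} and the relations of Proposition~\ref{p:ein} shows that ``$\mathbb L$ is of LP type'' is equivalent to $\delta=-1$ or $\delta^3\varepsilon^2=-1$; since neither of these loci coincides with $\{\delta^3\varepsilon^2+(1-3\delta)\varepsilon+1=0\}$, the middle inequality cannot be read off from the not-LP hypothesis alone, and must instead be a refined consequence of scatteredness. The plan is therefore to show that if $P:=\delta^3\varepsilon^2+(1-3\delta)\varepsilon+1$ vanishes, then \eqref{e:sctness} admits a solution, contradicting the previous paragraph. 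The obstacle is that this solution is not rational over $\Fq$: imposing $x\in\Fq$ forces a spurious $\sqrt5$ and misses the locus $P=0$, so one must either exhibit the solution explicitly in $\Fqc$, or (as the continuation of the paper suggests) interpret $P$ as the coefficient controlling the degree—equivalently an absolute factor—of the degree-four curve $\mathcal V$ attached to \eqref{e:sctness}, so that $P=0$ degenerates $\mathcal V$ and forces an $\Fq$-rational point on it. Making this last step rigorous is the crux of the argument.
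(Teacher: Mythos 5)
Your normal-form step and two of the three side conditions are handled correctly, and essentially as the paper does: the paper also obtains $\delta^2\varepsilon\neq1$ from $\alpha^{q^s}\neq\beta^{q^s+1}$ (your route through Proposition~\ref{p:gen1}$(i)$ and $(iii)$ is, if anything, the cleaner justification of that inequality), and its treatment of \eqref{e:sctness} is exactly your substitution $u=kx$ followed by division by the common factor $\delta k^{q^{2s}+1}$ in Proposition~\ref{fulemma41}.

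The genuine gap is the middle inequality, and it begins with your ``short computation'', which is incorrect. In the paper's case $\mu=1$ (so $k=1$, $\varepsilon=1$, $\delta=1-e$) the specialization of the criterion \eqref{B} is \eqref{LPcaso2}, which for $e\in\Fq$ reads $(e-1)(e-3)=0$, i.e.\ $\delta=0$ or $\delta=-2$; your claimed LP locus at $\varepsilon=1$ is $\delta=-1$ or $\delta^3=-1$, and the two disagree (for instance, $\delta=-2$ is of LP type but satisfies $\delta^3=-1$ only in characteristic $7$). Moreover, at $\varepsilon=1$ one has $P:=\delta^3\varepsilon^2+(1-3\delta)\varepsilon+1=e^2(3-e)$, so $P=0$ together with $e\neq0$ (forced by $\rk A=5$) gives exactly $e=3$, a point of LP type. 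Hence $\{P=0\}$ lies inside the LP locus on the admissible parameter region, and the middle inequality \emph{does} follow from the not-LP hypothesis, contrary to your assessment. That is precisely what the paper does, geometrically rather than through \eqref{B}: since $F=\la A,B_2\ra\cap\la A_1,B\ra$ lies in $\Gamma$, Theorem~\ref{LPlines} shows that $F\in\la B_3,B_4\ra$ would force $\mathbb L$ to be of LP type; therefore $F$, $B_3$, $B_4$ are not collinear; writing out the coordinates of $F$ (formula \eqref{e:coof}) and of $B_3$, $B_4$ (the $\sigma$-images of the point $B$ computed in the proof of Proposition~\ref{p:almost}) and row-reducing, their collinearity is equivalent to $P=0$. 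In the case $\mu=1$ the paper concludes instead via \eqref{LPcaso2} as above.

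Your fallback, proving that $P=0$ forces a solution of \eqref{e:sctness}, cannot be made rigorous because it is false: at $(\delta,\varepsilon)=(-2,1)$, i.e.\ the case $\mu=1$, $e=3$, the linear set \eqref{e:formak} is of LP type, hence maximum scattered, hence by Proposition~\ref{fulemma41} the system \eqref{e:sctness} has \emph{no} solution, although $P=0$. The true relation between $P$ and solvability is the opposite one and is the content of Theorem~\ref{t:conj}: $\delta^2\varepsilon\neq1$ and $P\neq0$ imply that \eqref{e:sctness} \emph{is} solvable. In the paper's architecture, $P\neq0$ is an \emph{output} of the not-LP hypothesis in the present theorem and an \emph{input} to Theorem~\ref{t:conj}, which then rules these sets out; your plan inverts this logical flow, so quite apart from the incompleteness you acknowledge, it cannot close the proof.
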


\begin{proof}
\textsc{Case $\N_{q^5/q}(\mu)\neq1$.} 
By Proposition~\ref{p:gen1} ($ii$),
\[
  \frac{\alpha^{q^s}}{\beta^{q^s+1}}=\frac{k^{-q^s}}{\delta^2k^{q^{4s}+q^{3s}+q^{2s}+1}}=
  \frac1{\delta^2\varepsilon}
\]
is distinct from one, so $\delta^2\varepsilon\neq1$.

By substitution of $\beta^{q^{3s}+q^s+1}=\delta^3k^{q^{4s}+q^{3s}+2q^{2s}+2}$,$\beta^{q^{3s}}=\delta k^{q^{2s}+1}$, $\alpha^{q^{3s}}=k^{-q^{3s}}$, $\beta^{q^s+1}=\delta^2k^{q^{4s}+q^{3s}+q^{2s}+1}$,
Proposition~\ref{fulemma41} reads as follows: $\mathbb{L}$ is maximum scattered if and only if there is no $z\in\Fqc$ such that
\[
\begin{cases}
  \N_{q^5/q}(z)=-1\\
\delta^3k^{q^{4s}+q^{3s}+2q^{2s}+2}z^{q^s}-\delta^2k^{q^{4s}+q^{3s}+q^{2s}+1}z^{q^{2s}+q^s+1}(1-k^{-1} z)^{q^{3s}}\\
\quad +\delta k^{q^{2s}+1}(1-k^{-1} z)^{q^s+1}=0.
\end{cases}
\]
Dividing by $\delta k^{q^{2s}+1}$ and substituting $z=kx$, we get \eqref{e:sctness}.

Since $\mathbb L$ is not of LP type, the points \begin{align*}
    B_3&=[1,\delta k^{q^{3s}+q^{2s}+1},k^{q^{3s}},k^{-q^s},\delta k^{q^{3s}+1}], \\
    B_4&=[\delta k^{q^{4s}+q^s},1,\delta k^{q^{4s}+q^{3s}+q^s},k^{q^{4s}},k^{-q^{2s}}],
\end{align*}
and $F$~ (cf. \eqref{e:coof}) are not collinear.
By standard row-reducing, this reads as $\delta^3\varepsilon^2+(1-3\delta)\varepsilon+1\neq0$.

\noindent
\textsc{Case $\mu=1$.}
Clearly condition $\delta^2 \varepsilon \neq 1$ and there is no $x \in \F_{q^5}$ satisfying \eqref{e:sctness} hold. The condition $\delta^3\varepsilon^2+(1-3\delta)\varepsilon+1=0$ is equivalent to $e=3$, and this by \eqref{LPcaso2} implies that $\mathbb L$ is of LP type, a contradiction.
\end{proof}

\section{An algebraic equation}\label{s:conj}

In this section, we shall show that  if $\mathbb{L}$ is a maximum scattered linear set of $\PG(1,q^5)$ neither of pseudoregulus type nor of LP type, and $\rk A=5$, the conditions in Theorem~\ref{t:almost} never hold.
As a consequence, we will get that there are no new maximum scattered linear sets in $\PG(1,q^5)$ with $\max\{\rk A,\rk B\}=5$. More precisely, we will show the following.
\begin{theorem}\label{t:conj}
Let $1 \leq s \leq 4$ and $\delta, \varepsilon \in\mathbb{F}_q^*$  such that $\delta^2\varepsilon\neq1$. If $\delta^3\varepsilon^2 + (1 -3\delta)\varepsilon + 1  \neq 0$, then there exists $x \in \F_{q^5}$ satisfying $\eqref{e:sctness}$.
\end{theorem}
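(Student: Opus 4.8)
The plan is to turn the existence of a solution of \eqref{e:sctness} into the existence of an $\F_q$-rational point on a plane curve $\mathcal V$ defined over $\F_q$, and then to extract such a point from the Hasse--Weil bound for all sufficiently large $q$, dealing with the small $q$ and with the degenerate pairs $(\delta,\varepsilon)$ by a direct analysis. Since $\delta,\varepsilon\in\F_q$, the whole system has coefficients in $\F_q$; in particular, if $x$ solves \eqref{e:sctness} then so does $x^q$, so the solution set is a union of Galois orbits. Moreover the involution $s\mapsto 5-s$ (passing to the adjoint, as in \cite{CMP}) together with the scalings already used lets me reduce to $s\in\{1,2\}$, which I would fix once and for all.

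First I would simplify the polynomial equation. Writing $x_i=x^{q^{si}}$ and applying the M\"obius substitution $g_i=x_i/(1-x_i)$, i.e.\ $x_i=g_i/(1+g_i)$, the second relation of \eqref{e:sctness}, after dividing by $(1-x_0)(1-x_1)$ and clearing denominators, takes the shape
\[
\delta^2\varepsilon\,g_1(1+g_0)(1+g_2)(1+g_3)-\delta\varepsilon\,g_0g_1g_2+(1+g_2)(1+g_3)=0,
\]
while the norm condition becomes $\varepsilon\,g_0g_1g_2g_3g_4=-\prod_{i=0}^4(1+g_i)$. The displayed equation has total degree four in the $g_i$, which is the source of the bound ``degree at most four''. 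Using the norm relation (and, where needed, its Frobenius images) to eliminate the superfluous conjugates, exactly as in the construction of the curve attached to $\wp_\Gamma(\Sigma)$ in \cite{MZ2019} but now for general $s$, I would obtain a single polynomial $F\in\F_q[X,Y]$ of degree at most four. The curve $\mathcal V\colon F(X,Y)=0$ is defined over $\F_q$, and I would check that an $\F_q$-rational point lying off an explicit finite bad locus (the points at infinity, the zeros of the denominators introduced above, and the points forcing $x\in\F_q$) reconstructs a genuine $x\in\F_{q^5}$ satisfying both parts of \eqref{e:sctness}.

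Next I would show that, away from a short list of exceptional $(\delta,\varepsilon)$, the quartic $\mathcal V$ is absolutely irreducible of geometric genus $g\le3$. The natural route is to determine the singular points of $\mathcal V$ and its behaviour at infinity, to verify that a nontrivial factorization over $\overline{\F_q}$ is incompatible with the local structure there (in particular that the degree-four part carries no repeated component for generic parameters), and to read off $g$ from the number and type of singularities through the genus--degree formula. Here the hypothesis $\delta^3\varepsilon^2+(1-3\delta)\varepsilon+1\neq0$ plays its role: its vanishing is precisely the degeneration that forces the LP configuration (compare Case $\mu=1$ in the proof of Theorem~\ref{t:almost}, where it amounts to $e=3$), so assuming it nonzero keeps $\mathcal V$ in the generic, irreducible regime. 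Together with $\delta^2\varepsilon\neq1$, which guarantees $\mathcal V$ is not of pseudoregulus type via Proposition~\ref{p:gen1}($ii$), these are exactly the conditions under which the curve is the ``good'' one. With $g\le3$ the Hasse--Weil bound gives
\[
\#\mathcal V(\F_q)\ \ge\ q+1-2g\sqrt q\ \ge\ q+1-6\sqrt q,
\]
which exceeds the size of the bad locus once $q$ is larger than an explicit constant; any remaining point then produces the desired solution $x$.

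The main obstacle is the uniform control of $\mathcal V$: proving absolute irreducibility and $g\le3$ simultaneously for all admissible $(\delta,\varepsilon)$, and identifying the finitely many degenerate pairs where $F$ splits or gains extra singular points. For those pairs, and for the finitely many $q$ below the Hasse--Weil threshold, I would argue case by case or by a finite machine verification: when $\mathcal V$ is reducible one usually gains a low-degree component with many $\F_q$-points, so a solution is still available, and the point is to check that the admissible locus is never entirely missed. A further delicate step is the reconstruction map itself --- one must ensure that the $\F_q$-point found is not absorbed into the bad locus, i.e.\ that it genuinely corresponds to some $x\in\F_{q^5}$ with $\varepsilon\N_{q^5/q}(x)=-1$, rather than to a spurious root introduced during the elimination.
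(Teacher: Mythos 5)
Your overall skeleton for the generic case coincides with the paper's: rewrite \eqref{e:sctness} over a normal basis, pass birationally to a plane curve of degree at most four, prove absolute irreducibility and genus at most three, invoke Hasse--Weil for large $q$, and finish the small $q$ by computer. But there is a genuine gap, and it is structural rather than a missing detail. You assert that the hypothesis $\delta^3\varepsilon^2+(1-3\delta)\varepsilon+1\neq0$ ``keeps $\mathcal V$ in the generic, irreducible regime'', so that reducibility occurs only for a short list of exceptional pairs $(\delta,\varepsilon)$, to be dispatched case by case or by a finite machine check. This is false. As Lemma~\ref{no-quadratic} shows, even under both hypotheses the quartic splits into two conics whenever $\varepsilon=1$ and $\delta^2+3\delta+1=0$ has a root in $\F_q$, i.e.\ whenever $q$ is an even power of $2$ or $q\equiv1,4\pmod 5$ is odd. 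This is an infinite family of fields, each carrying an admissible $\delta$, so no finite verification can cover it; a uniform argument valid for all such $q$ is indispensable.

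Moreover, your fallback for the reducible case --- ``a low-degree component with many $\F_q$-points, so a solution is still available'' --- fails precisely there, because of the twist that you flag only in passing at the end. Solutions of \eqref{e:sctness} do not correspond to the naive $\F_q$-rational points of the plane model in the conjugate coordinates $(X,Y)=(x,x^{q^s})$: the roughly $q$ rational points of the conic \eqref{conic} are useless, since a point with $X,Y\in\F_q$ reconstructs no element of $\F_{q^5}$ at all. What is needed is a point of the twisted form $(\ell,\ell^{q^s})$ with $\ell\in\Fqc$, whose reconstructed third, fourth and fifth coordinates moreover equal $\ell^{q^{2s}},\ell^{q^{3s}},\ell^{q^{4s}}$. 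In the paper this takes two nontrivial ingredients for which your proposal offers no substitute: the McGuire--Sheekey theorem on the roots of $\xi^{q^s+1}-\delta\xi-\delta$, used to produce $\ell$, and the explicit verification --- which succeeds only because $\delta^2+3\delta+1=0$ --- that the birational reconstruction of $C,D,E$ from $(\ell,\ell^{q^s})$ returns exactly the remaining Frobenius conjugates of $\ell$. In the irreducible case this issue is invisible only because Hasse--Weil is applied to the variety $\mathcal V$ in the $x_i$-coordinates, where $\F_q$-rationality is the correct condition and the plane model serves solely to compute geometric invariants; once the curve is reducible, there is no way around the twisted count.
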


\medskip
Although we will use the techniques contained in \cite[Section 4]{MZ2019}, to make the work as self-contained as possible, we prefer to  adapt them to our context.\\
Consider a normal element of $\F_{q^5}$ over $\F_q$, say $\gamma$, see e.g. \cite[Theorem 2.35]{LidlNeid}. Then, for any integer $1 \leq s \leq 4$, $ \{\gamma,\gamma^{q^s},\gamma^{q^{2s}},\gamma^{q^{3s}},\gamma^{q^{4s}}\}$ is an $\F_q$-basis of $\F_{q^5}$ and every element $x$ in $\F_{q^5}$ can be written as  $x= \sum_{i=0}^{4} x_i\gamma^{q^{si}}$,
where $x_i \in \F_q$, $i=0,1,2,3,4$. Moreover, we can identify $\mathbb{F}_{q^5}$ with $\mathbb{F}_q^5$ in the natural way.

Equations \eqref{e:sctness} are therefore equivalent to a system $\mathcal{S}$ of ten equations $C_j(x_0,x_1,x_2,x_3,x_4)=0$ in the new variables $x_i$.
Denoting by $\mathcal{V}\subseteq \mathrm{AG}(5,q)$ the affine variety associated with $\mathcal{S}$, Theorem \ref{t:conj} is therefore equivalent to the statement that $\mathcal{V}$ has an $\mathbb{F}_q$-rational point. 
To prove this, we will show that $\mathcal{V}$ is a variety of dimension one, i.e.\ an algebraic curve, and then the existence of a point will be a consequence of the Hasse-Weil Theorem , in the case the curve $\mathcal{V}$ is absolutely irreducible \cite[Theorem 9.18]{HirschKorTor}, or an $\F_q$-rational point of $\mathcal{V}$ will be directly exhibited.

We first apply the following change of variables in $\AG(5,\overline{\F}_{q})$ (whose matrix is a so-called Moore matrix and is nonsingular)
\begin{equation*}   
\phi:
\begin{cases}
A=x_0\gamma+x_1\gamma^{q^s}+x_2\gamma^{q^{2s}}+x_3\gamma^{q^{3s}}+x_4\gamma^{q^{4s}}\\
B=x_4\gamma+x_0\gamma^{q^s}+x_1\gamma^{q^{2s}}+x_2\gamma^{q^{3s}}+x_3\gamma^{q^{4s}}\\
C=x_3\gamma+x_4\gamma^{q^s}+x_0\gamma^{q^{2s}}+x_1\gamma^{q^{3s}}+x_2\gamma^{q^{4s}}\\
D=x_2\gamma+x_3\gamma^{q^s}+x_4\gamma^{q^{2s}}+x_0\gamma^{q^{3s}}+x_1\gamma^{q^{4s}}\\
E=x_1\gamma+x_2\gamma^{q^s}+x_3\gamma^{q^{2s}}+x_4\gamma^{q^{3s}}+x_0\gamma^{q^{4s}}.
\end{cases}
\end{equation*}
 
The $\Fq$-rational points in $\AG(5,q)$ are mapped by $\phi$ to those of type $(A,B,C,D,E)=(x,x^{q^s},x^{q^{2s}},x^{q^{3s}},x^{q^{4s}})$.
Denote by $\cC$ the image of $\mathcal{V}$ under $\phi$.
Since the dimension, the genus and the absolute irreducibility are birational invariants, we can study $\cC$ in place of $\cV$.\\
Let
\begin{equation}
\begin{split}
&G(A,B,C,D,E):=ABCDE\varepsilon+1,\\
&F_0(A,B,C,D,E):=\delta^2  \varepsilon  B-\delta  \varepsilon  A B C (1-D)+(1-A) (1-B)\\   
& 
F_{i+1}(A,B,C,D,E):=F_i(B,C,D,E,A), \quad i=0,1,2,3.
\end{split}
\end{equation}

By \eqref{e:sctness}, we get  that $\mathcal{C}$ is given by the following system:
\begin{equation}\label{equazioni}
\mathcal{C}:\begin{cases}
&G(A,B,C,D,E)=0\\
&F_i(A,B,C,D,E)=0,\, i=0,1,2,3,4.
\end{cases}
\end{equation}

Now, we are in the position to prove the following.
\begin{lemma}\label{quartic curve}
    Let $\delta,\varepsilon \in \F_{q}^*$ such that $\delta^2 \varepsilon \neq 1$. If $\delta^3\varepsilon^2 + (1 -3\delta)\varepsilon + 1  \neq 0$, then  the algebraic variety $\cC$ (cf. \eqref{equazioni}) is birationally equivalent to the plane curve
    $\mathcal{Q}: f(X,Y) = 0$ of degree at most four, where
    \begin{equation} \small\label{equazione curva}
      f(X,Y)
      = f_0 X^2 Y^2
        + f_1 X Y (X+Y)
        + f_2 (X^2+Y^2)
        + f_3 X Y
        + f_4 (X+Y)
        + f_5,
    \end{equation}
    with
    \begin{align*}
      f_0 &= (-\delta^2 \varepsilon +1) (\delta\varepsilon-1),\\
      f_1 &= (\delta^2\varepsilon+ \delta\varepsilon- 2) (\delta^2\varepsilon- 1),\\
      f_2 &= -(\delta^2\varepsilon-1)^2,\\
      f_3 &= \delta^6\varepsilon^3-5 \delta^4\varepsilon^2+6 \delta^2\varepsilon+\delta\varepsilon+\delta-4,\\
      f_4 &= (\delta^2\varepsilon-1) (\delta^2\varepsilon+\delta-2),\\
      f_5 &= (\delta-1) (-\delta^2\varepsilon+1).
    \end{align*}

\end{lemma}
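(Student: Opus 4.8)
The goal is to construct an explicit birational map $\cC \dashrightarrow \cQ$; since a birational equivalence preserves dimension, this will in particular certify that $\cC$ is a curve. The plan is to exploit the structural feature of \eqref{equazioni} that each $F_i$ is affine-linear (degree one) in every one of the four variables it involves, and that $G$ is linear in each variable as well. Consequently every defining equation can be used, one at a time, to solve for a single coordinate as a \emph{rational} function of the others, with no algebraic branching, so that three of the five coordinates can be eliminated and only two, say $X$ and $Y$, survive as coordinates on a plane curve.

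Concretely, I would first use $G=0$ to write $E=-1/(\varepsilon ABCD)$, then use $F_0=0$, which is linear in $D$ with pivot the monomial $\delta\varepsilon ABC$, to express $D$ as a rational function of $A,B,C$; substituting these and using that one of the remaining $F_i$ is linear in $C$, I would eliminate $C$ as well, leaving the pair $(X,Y)=(A,B)$. Substituting the resulting rational expressions into the still-unused equations among $F_1,\dots,F_4$ and clearing denominators produces two or three bivariate polynomial relations in $X,Y$.

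The heart of the argument is then to verify that these residual relations are all scalar multiples of a single polynomial of degree at most four, namely $f(X,Y)$ of \eqref{equazione curva}, modulo factors that vanish only on the loci excluded by the standing hypotheses. Here the two conditions play distinct roles: the leading coefficient $(1-\delta^2\varepsilon)(\delta\varepsilon-1)$ of $X^2Y^2$ contains the factor $1-\delta^2\varepsilon$, which is nonzero by hypothesis, so that $\cQ$ does not drop below its stated degree on this account and the pivots used in the elimination do not degenerate, while $\delta^3\varepsilon^2+(1-3\delta)\varepsilon+1\neq0$ excludes the degeneration of $\cC$ into lower-dimensional or reducible pieces; recall from the proof of Theorem~\ref{t:almost} that this last expression vanishes precisely on the LP-type locus (the case $e=3$), which is already ruled out. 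Under these conditions each elimination step is a reversible rational substitution, so reading the expressions for $C$, $D$, and $E$ backwards recovers $(A,B,C,D,E)$ from a generic point of $\cQ$; this is the inverse rational map, and it establishes the birational equivalence, hence $\dim\cC=1$ and $\deg\cQ\le4$.

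The main obstacle is computational rather than conceptual: the eliminant is a dense polynomial whose coefficients are themselves sizeable polynomials in $\delta$ and $\varepsilon$, so the elimination and the factorization that isolates $f$ are realistically carried out only with a computer algebra system. The genuinely delicate point to certify is that the residual relations collapse to the \emph{single} factor $f$ --- equivalently, that the constructed map has degree one and is not merely dominant of higher degree --- and this is exactly the step in which the hypotheses $\delta^2\varepsilon\neq1$ and $\delta^3\varepsilon^2+(1-3\delta)\varepsilon+1\neq0$ must be invoked to guarantee that the monomial pivots and the leading coefficient do not vanish on $\cC$.
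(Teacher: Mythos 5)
Your proposal follows essentially the same route as the paper's proof: solve $G=0$ for $E$, use $F_0=0$ (linear in $D$) to eliminate $D$, then use $F_1=0$ (which after substitution becomes linear in $C$) to eliminate $C$ --- this being precisely the step where the hypothesis $\delta^3\varepsilon^2+(1-3\delta)\varepsilon+1\neq0$ (together with $\delta^2\varepsilon\neq1$ and the fact that $G=0$ forces all coordinates to be nonzero) guarantees the pivot does not vanish on $\cC$ --- and finally verify that $F_2=0$ yields exactly the quartic $f(A,B)=0$ while $F_3=0$ and $F_4=0$ impose no further conditions. The reversibility of these rational substitutions gives the birational equivalence, exactly as in the paper.
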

\begin{proof}
We eliminate the variables $ C,D,E$ successively from the system \eqref{equazioni}, using 
$G(A,B,C,D,E)=0$ and $F_i(A,B,C,D,E)=0$, to obtain a single equation in $A,B$.
By the first equation in \eqref{equazioni}, we get $A,B,C,D,E\neq 0$ and, hence,
\begin{equation}\label{variableE}
E=-1/(\varepsilon ABCD).
\end{equation}
Moreover, by $F_0(A,B,C,D,E)=0$ we derive
\begin{equation} \label{variableD}
D=P(A,B,C):= (A B C \delta  \varepsilon  - A B + A - B \delta^2  \varepsilon  + B - 1)/(A B C \delta  \varepsilon ).
\end{equation}
Since $F_1(A,B,C,P(A,B,C),-1/( \varepsilon  A B C \cdot P(A,B,C) ))=0$,
we get 
\begin{equation*}
A( B \delta  \varepsilon  -  B -  \delta^2  \varepsilon  + 1 )C=( B \delta^2  \varepsilon  - B - \delta + 1).
\end{equation*}
Note that, since $AC\neq 0$, $ B \delta  \varepsilon  -  B -  \delta^2  \varepsilon  + 1$ is zero if and only if $ B \delta^2  \varepsilon  - B - \delta + 1$ is zero.
In this case, since $B\neq 0$ and $\delta^2\varepsilon\neq 1$, it would follow $(\delta^2\varepsilon-1)^2-(\delta-1)(\delta \varepsilon -1)=0$, against our hypotheses. 
Hence, we have
\begin{equation}\label{variableC}
C=Q(A,B):=( B \delta^2  \varepsilon  - B - \delta + 1)/(A( B \delta  \varepsilon  -  B -  \delta^2  \varepsilon  + 1 )).
\end{equation}

In order to eliminate the variable $C$ from \eqref{variableD}, write
\begin{equation*}
\begin{split}
\hat{P}(A,B):=P(A,B,Q(A,B))
=& \frac{1}{B \delta \varepsilon (1-B-\delta + 
    B \delta^2\varepsilon)}(-A B^2 \delta  \varepsilon  + A B^2 + A B \delta^2  \varepsilon+  \\ 
    & A B \delta  \varepsilon  - 2 A B - A \delta^2  \varepsilon + 
    A + B^2 \delta^2  \varepsilon  - B^2 + B \delta^4  \varepsilon ^2 -\\
    &3 B \delta^2  \varepsilon  + 2 B + \delta^2  \varepsilon  -1).
    \end{split}
\end{equation*}
 Now, we will express the equations $F_i(A,B,C,D,E)=0$, $i \in \{2,3,4\}$, in terms of the variables $A$ and $B$. 

Substituting in $F_2(A,B,C,D,E)=0$, we obtain the equation $f(A,B)=0$, where
\begin{equation*}
f(A,B):=F_2(A,B,Q(A,B),\hat{P}(A,B),-1/( \varepsilon A B Q(A,B) \hat{P}(A,B))).
\end{equation*}
This can be written as
\begin{align*}
f(A,B)=&A^2 B^2 (-\delta^2 \varepsilon +1) (\delta\varepsilon-1) +\\
&A B (A+B) (\delta^2\varepsilon+ \delta\varepsilon- 2) (\delta^2\varepsilon- 1)+\\
&(A^2+B^2) (-(\delta^2\varepsilon-1)^2) +\\
&A B (\delta^6\varepsilon^3-5 \delta^4\varepsilon^2+6 \delta^2\varepsilon+\delta\varepsilon+\delta-4)+\\ 
&(A+B) (\delta^2\varepsilon-1) (\delta^2\varepsilon+\delta-2)+\\
&(\delta-1)(-\delta^2\varepsilon+1).
\end{align*}

Finally, note that both $F_3(A,B,C,D,E)=0$ and $F_4(A,B,C,D,E)=0$, when expressed in terms of $A$ and $B$ only, are satisfied as soon as $f(A,B)=0$.
\end{proof}

\begin{lemma}\label{no-linear-components}
Let $\delta,\varepsilon \in \F_{q}^*$ satisfy $\delta^2 \varepsilon \neq 1$ and
$\delta^3\varepsilon^2 + (1 -3\delta)\varepsilon + 1  \neq 0$. Then, the curve
$\mathcal{Q}: f(X,Y)=0$ (cf.\ \eqref{equazione curva}) has
no linear components.
\end{lemma}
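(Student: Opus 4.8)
The plan is to exploit the $X\leftrightarrow Y$ symmetry of $f$ together with an examination of its leading form. Write the six coefficients of $X^2Y^2$, $XY(X+Y)$, $X^2+Y^2$, $XY$, $X+Y$, $1$ in \eqref{equazione curva} as $K_1,\dots,K_6$, so that $K_1=(1-\delta^2\varepsilon)(\delta\varepsilon-1)$ and $K_6=(\delta-1)(1-\delta^2\varepsilon)$. The polynomial $f$ is invariant under $X\leftrightarrow Y$, so linear components occur either along symmetric lines or in reflected pairs; this will roughly halve the casework. I would first treat the generic case $K_1\neq0$, where $f$ has total degree exactly four with homogeneous degree-$4$ part $K_1X^2Y^2$. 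For any linear factor $L=aX+bY+c$ the top form $aX+bY$ must divide $X^2Y^2$, hence $aX+bY$ is proportional to $X$ or to $Y$. Thus the only possible linear components are the "vertical" lines $X=\xi$ and the "horizontal" lines $Y=\eta$, and by symmetry it suffices to rule out $Y=\eta$.

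Next I would write $f$ as a quadratic in $X$, namely $f=c_2(Y)X^2+c_1(Y)X+c_0(Y)$ with $c_2(Y)=K_1Y^2+K_2Y+K_3$, $c_1(Y)=K_2Y^2+K_4Y+K_5$ and $c_0(Y)=K_3Y^2+K_5Y+K_6$. The line $Y=\eta$ is a component precisely when $c_2(\eta)=c_1(\eta)=c_0(\eta)=0$, and by the symmetric arrangement of the coefficients these three conditions read $M\,(\eta^2,\eta,1)^{T}=0$ for the symmetric (Hankel-type) matrix $M=\left(\begin{smallmatrix}K_1&K_2&K_3\\ K_2&K_4&K_5\\ K_3&K_5&K_6\end{smallmatrix}\right)$. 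I would then eliminate $\eta$ from this overdetermined system: combining the first and third equations removes the $\eta^2$ term and gives $\eta=(K_1K_6-K_3^2)/(K_2K_3-K_1K_5)$, while combining the first and second gives $\eta=(K_1K_5-K_2K_3)/(K_2^2-K_1K_4)$; equating the two (equivalently, imposing $\det M=0$ together with the Veronese condition on $\ker M$) yields a polynomial identity $R(\delta,\varepsilon)=0$.

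The main obstacle is proving that $R(\delta,\varepsilon)\neq0$ under the two hypotheses. The expectation is that each elimination polynomial factors, up to a monomial in $\delta,\varepsilon$, as a product of the two forbidden polynomials $\delta^2\varepsilon-1$ and $\delta^3\varepsilon^2+(1-3\delta)\varepsilon+1$ (times units), so that $R$ can vanish only where one of the hypotheses already fails. Verifying this factorization exactly is the technical heart, and I expect it to require a careful (or computer-assisted, e.g.\ resultant/Gr\"obner) check; one must also handle separately the degenerate branches in which the elimination is invalid, i.e.\ where a pivot minor such as $K_2K_3-K_1K_5$ or $K_2^2-K_1K_4$ vanishes. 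In each such branch I would substitute the vanishing-minor relation back into the triple $c_2(\eta)=c_1(\eta)=c_0(\eta)=0$ and dispose of it by hand.

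Finally I would treat the degenerate case $K_1=0$. Since $\delta^2\varepsilon\neq1$, this forces $\delta\varepsilon=1$, and from $\delta^2\varepsilon=\delta\neq1$ we also get $\delta\neq1$. Substituting $\varepsilon=\delta^{-1}$ makes every coefficient divisible by $(\delta-1)^2\neq0$, and after removing this factor $f$ collapses to the cubic
\[
  XY(X+Y)-(X^2+Y^2)+(\delta-3)XY+2(X+Y)-1 .
\]
Its degree-$3$ part is $XY(X+Y)$, so now the only candidate directions are $X=\xi$, $Y=\eta$ and $X+Y=\kappa$. A direct substitution of each candidate into the cubic forces its constant coefficient to equal $-\delta^2$ (after the leading coefficient fixes the intercept), which is nonzero because $\delta\in\F_q^*$; hence no linear component exists in this case either. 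Combining the two cases gives the claim.
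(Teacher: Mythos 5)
Your proposal has the right skeleton, and in fact it is structurally the same argument as the paper's: split according to whether the leading coefficient $(1-\delta^2\varepsilon)(\delta\varepsilon-1)$ vanishes (under the hypotheses, exactly when $\delta\varepsilon=1$, giving the cubic case); in the quartic case use the top form $K_1X^2Y^2$ to force any linear component to be of the form $X=\xi$ or $Y=\eta$; and observe that $Y=\eta$ is a component precisely when the three quadratics $c_2(\eta)=c_1(\eta)=c_0(\eta)=0$ hold simultaneously. But the proof stops exactly where the lemma's content begins. The entire burden of the statement is to verify that this system is incompatible with $\delta^2\varepsilon\neq1$ and $\delta^3\varepsilon^2+(1-3\delta)\varepsilon+1\neq0$. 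You reduce it to an eliminant $R(\delta,\varepsilon)$ and then declare it an \emph{expectation} that $R$ factors, up to monomials, into the two forbidden polynomials, adding that verifying this is the technical heart and would require a computer-assisted check, and that the degenerate branches (vanishing pivots $K_2K_3-K_1K_5$ and $K_2^2-K_1K_4$) would be disposed of by hand. None of this is carried out, so nothing is actually proved: a priori $R$ could contain extra factors vanishing at admissible pairs $(\delta,\varepsilon)$, and the degenerate branches could conceal genuine linear components.

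For comparison, the paper completes precisely this step, and does so in a way that avoids dividing by any pivot: the constant condition $c_0(r)=0$ factors as $(1-r)(\delta^2\varepsilon-1)(\delta^2\varepsilon r-\delta-r+1)=0$, which pins down $r\in\{1,\,(\delta-1)/(\delta^2\varepsilon-1)\}$; substituting these two values into the middle condition $c_1(r)=0$ yields respectively $\varepsilon\delta^3(\delta^3\varepsilon^2-3\delta\varepsilon+\varepsilon+1)=0$ and $\delta^2(\delta^3\varepsilon^2-3\delta\varepsilon+\varepsilon+1)=0$, both impossible under the hypotheses. An equivalent explicit computation (your two-pivot elimination or a resultant would also work) is what your write-up must actually contain, together with the degenerate branches, before it counts as a proof. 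A smaller inaccuracy: in the cubic case, your claim that each candidate line forces a constant coefficient equal to $-\delta^2$ is correct only for $X+Y=\kappa$; for the lines $X=\xi$ and $Y=\eta$ the leading coefficient forces the intercept to be $1$, the constant term then vanishes, and the obstruction is the surviving linear coefficient, which equals $\delta\neq0$. The method still closes that case, but not for the reason you state.
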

\begin{proof}
The coefficient $f_0$ of the term of degree four of $f(X,Y)$ in \eqref{equazione curva} is 0  if and only if $\delta\varepsilon=1$. We split the proof in two cases: $f_0=0$ and $f_0\neq 0$.\\
\textbf{Case 1.} $f_0 = 0$. We have that $\delta-1\neq 0$, otherwise $\delta^2\varepsilon=1$. Then,
$$f(X,Y)=(\delta - 1)^2(X^2Y - X^2 + XY^2 + XY\delta - 3XY + 2X - Y^2 + 2Y - 1),$$
and $\mathcal{Q}$ is a cubic of the affine plane $\AG(2,q)$. Let $[\overline{X},\overline{Y},\overline{Z}]$ the homogeneous coordinates of a point in $\PG(2,q)$ and let $g(\overline{X},\overline{Y},\overline{Z}):=\overline{Z}^3f(\overline{X}/\overline{Z},\overline{Y}/\overline{Z})$. We now show that the projective variety $\overline{\mathcal{Q}}: g(\overline{X},\overline{Y},\overline{Z})=0$ has no linear components. 
Note that the intersection of the curve $\overline{\mathcal{Q}}$ with the coordinate axes $\overline{X}=0$, $\overline{Y}=0$ and $\overline{Z}=0$ are the point sets $I_1=\{[0,1,0],[0,1,1]\}$, $I_2=\{[1,0,0],[1,0,1]\}$ and $I_3=\{[0,1,0],[1,0,0],[1,-1,0]\}$, respectively. 
If a line $\ell$ is a component of $\overline{\mathcal{Q}}$, then $\ell$ must meet each of the coordinate axes in at least one point of $I_i$, $i=1,2,3$. 
This follows from the fact that any two lines in the plane meet in a point or they coincide, and that the intersection of $\ell$ with the coordinates axes must be a subset of the the intersection of $\overline{\mathcal{Q}}$ with the axes.
In particular, $\ell$ must be the line joining  a point one of $I_1$ and a point of $I_2$.
By direct checking, a line joining a point of $I_1$ and a point of $ I_2$ is one of the following:
\begin{itemize}
\item [$(i)$] the line $\overline{Z}=0$. This is not a component of $\overline{\mathcal{Q}}$.
\item [$(ii)$] the line $\overline{X}-\overline{Z}=0$. If this line is a component of $\overline{\mathcal{Q}}$, then 
\begin{align*}
& \overline{X}^2\overline{Y}+\overline{X}\overline{Y}^2-\overline{X}^2\overline{Z}+(\delta-3)\overline{X}\overline{Y}\overline{Z}+2\overline{X}\overline{Z}^2-\overline{Y}^2\overline{Z}+2\overline{Y}\overline{Z}^2-\overline{Z}^3=\\
& (\overline{Z}-\overline{X})(a\overline{X}^2+b\overline{Y}^2+c\overline{X}\overline{Y}+d\overline{X}\overline{Z}+e\overline{Y}\overline{Z}+f\overline{Z}^2)
\end{align*}
for some $a,b,c,d,e,f \in \F_{q}$. Then,  comparing the coefficients on the left and the right hand-side, we get $a=0$, $b=c=-d=f=-1$ and $e=2$. This implies $\delta=0$, which is a contradiction with our hypotheses. 
\item [$(iii)$] the line $\overline{Y}-\overline{Z}=0$.  Since $g(\overline{X},\overline{Y},\overline{Z})=g(\overline{Y},\overline{X},\overline{Z})$, a contradiction follows as above.
\item [$(iv)$] the line $\overline{X}+\overline{Y}-\overline{Z}=0$.  If this line is a component of $\overline{\mathcal{Q}}$, then 
\begin{align*}
&(\overline{X}+\overline{Y}-\overline{Z})(a\overline{X}^2+b\overline{Y}^2+c\overline{X}\overline{Y}+d\overline{X}\overline{Z}+e\overline{Y}\overline{Z}+f\overline{Z}^2)=\\
&\overline{X}^2\overline{Y}+\overline{X}\overline{Y}^2-\overline{X}^2\overline{Z}+(\delta-3)\overline{X}\overline{Y}\overline{Z}+2\overline{X}\overline{Z}^2-\overline{Y}^2\overline{Z}+2\overline{Y}\overline{Z}^2-\overline{Z}^3.
\end{align*}
Then,  comparing  the coefficients on the left- and the right-hand side, we get $a=b=0$, $c+e=-d=f=1$. This leads to $\delta=1$, a contradiction again.
\end{itemize}

\noindent \textbf{Case 2:} $f_0 \neq 0$. This is equivalent to $\delta\varepsilon\neq1$. Consider  $h(X,Y):=f_0^{-1} f(X,Y)$.\\
If $\mathcal{Q}$ has a linear component, then 
$$h(X,Y)=l(X,Y)\cdot k(X,Y),$$ 
with 
\begin{equation*}
l(X,Y)=aX+bY+c \quad \textnormal{and} \quad  k(X,Y)=k_1XY^2+k_2X^2Y+k_3XY+k_4X+k_5Y+k_6
\end{equation*} 
both belonging to $\F_{q}[X,Y]$. We will show that either $a \neq 0$ and $b=0$ or $a=0$ and $b \neq 0$. Suppose first $a \neq 0$. Since $h(X,Y)$ has a term in $X^2Y^2$ and has no term in $XY^3$ and in $X^3Y$, we get that $k_1$ cannot be equal to zero and that $k_2=b=0$. A similar argument applies if $b \neq 0$, getting $a=0$.
Since $h(X,Y)=h(Y,X)$, we obtain that 
\[
h(X,Y)=(X-r)(Y-r) \cdot m(X,Y)
\]
for some $r \in \F_q$, and $m(X,Y) \in \F_q[X,Y]$ with $\deg\, m(X,Y)=2$.
Clearly,  $h(r,Y)$ is the zero polynomial.

As a consequence, all the coefficients in $Y$ of  the following polynomial are zero:
\begin{equation*} 
\begin{split}
f(r,Y)=f_0 \cdot h(r,Y)=&\, (1-r) (\delta^2  \varepsilon  - 1) (\delta^2  \varepsilon   r - \delta -  r + 1) +\\
&(\delta ^4 \varepsilon ^2+\delta ^3 \varepsilon -3 \delta ^2 \varepsilon -\delta +\delta ^4 r^2 \varepsilon ^2+\delta ^3 r^2 \varepsilon ^2-3 \delta ^2 r^2 \varepsilon -\delta  r^2 \varepsilon+\\
&2 r^2+\delta ^6 r \varepsilon ^3-5 \delta ^4 r \varepsilon ^2+6 \delta ^2 r \varepsilon +\delta  r \varepsilon +\delta  r-4 r+2)Y+\\
&(1-r) (\delta ^2 \varepsilon -1)(\delta  r \varepsilon -\delta ^2 \varepsilon -r+1)Y^2.
\end{split}
\end{equation*}

In particular, the constant term is zero and hence either $r=1$ or $r=(\delta-1)/(\delta^2\varepsilon-1)$.
If $r=1$ then the coefficient of $Y$ is $\varepsilon \delta^3 (\delta^3\varepsilon^2 - 3\delta\varepsilon + \varepsilon + 1)$ which cannot be zero under our hypothesis.
If $r=(\delta-1)/(\delta^2\varepsilon-1)$ then the coefficient of $Y$ is $\delta ^2 \left(\delta ^3 \varepsilon ^2-3 \delta  \varepsilon +\varepsilon +1\right)$, which again cannot be zero.\\
This concludes the proof.
\end{proof}

Since a cubic curve without linear components is absolutely irreducible, Lemma \ref{no-linear-components} implies the following result.
\begin{proposition}\label{cubic-irreducible}
    Let $\delta,\varepsilon \in \F_{q}^*$ such that $\delta \varepsilon = 1$ and assume $\delta^2-2\delta +1 \neq 0$. 
    Then, the curve $\mathcal{Q}: f(X,Y)=0$ (cf. \ \eqref{equazione curva}) is an absolutely irreducible cubic.
\end{proposition}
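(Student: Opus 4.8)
The plan is to reduce the proposition to Lemma~\ref{no-linear-components} via the classical fact that, for a plane cubic, absolute irreducibility is equivalent to the absence of a linear component over $\overline{\F}_q$. The only genuinely new point beyond the Lemma is that its rational-line analysis already excludes \emph{all} linear components, including those a priori defined only over the closure.

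First I would record that the standing hypotheses $\delta\varepsilon=1$ and $(\delta-1)^2\neq0$ imply those of Lemma~\ref{no-linear-components}: since $\delta^2\varepsilon=\delta$, the condition $\delta^2\varepsilon\neq1$ reads $\delta\neq1$, and $\delta^3\varepsilon^2+(1-3\delta)\varepsilon+1=(\delta-1)^2/\delta\neq0$. By the computation carried out in that Lemma, when $\delta\varepsilon=1$ the polynomial $f(X,Y)$ of \eqref{equazione curva} equals $(\delta-1)^2$ times the cubic $c(X,Y)=X^2Y-X^2+XY^2+(\delta-3)XY+2X-Y^2+2Y-1$. Thus $\cQ$ is genuinely a cubic and it suffices to prove that $c$ is absolutely irreducible.

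Next I would invoke the elementary reduction. A ternary cubic form factors over $\overline{\F}_q$ either as an irreducible cubic, or as a product (line)$\,\times\,$(conic), or as a product of three lines; in the two reducible cases it necessarily admits a linear factor. Hence $c$ is absolutely irreducible as soon as its projective closure $\overline{\cQ}\colon g(X,Y,Z)=0$, with $g=Z^3 c(X/Z,Y/Z)$, has no line component over $\overline{\F}_q$.

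The crux — and the step I expect to be the main obstacle — is to upgrade the Lemma's $\F_q$-rational argument to the algebraic closure. Homogenising, one computes $g(0,Y,Z)=-Z(Y-Z)^2$ and $g(X,0,Z)=-Z(X-Z)^2$, so the intersections of $\overline{\cQ}$ with the axes $X=0$ and $Y=0$ consist only of the $\F_q$-rational points $I_1=\{[0,1,0],[0,1,1]\}$ and $I_2=\{[1,0,0],[1,0,1]\}$, while $g(X,Y,0)=XY(X+Y)\neq0$; in particular none of the three coordinate lines divides $g$. Consequently any hypothetical line component $\ell$ of $\overline{\cQ}$ over $\overline{\F}_q$ meets $\{X=0\}$ and $\{Y=0\}$ in exactly one point each, necessarily lying in $I_1$ and $I_2$ respectively. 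Being determined by two $\F_q$-rational points, $\ell$ is itself $\F_q$-rational, hence equal to one of the four rational lines joining a point of $I_1$ to a point of $I_2$ — precisely the lines $(i)$–$(iv)$ already excluded in Lemma~\ref{no-linear-components}. Therefore $\overline{\cQ}$ has no linear component over $\overline{\F}_q$, and the cubic $\cQ$ is absolutely irreducible.
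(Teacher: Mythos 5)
Your proof is correct and takes essentially the same route as the paper: the paper's own proof of this proposition is exactly the reduction to Lemma~\ref{no-linear-components} combined with the fact that every reducible cubic has a linear component. The only extra content in your write-up is that you make explicit a detail the paper leaves implicit, namely that since all intersections of $\overline{\cQ}$ with the coordinate axes are $\F_q$-rational points, any line component over $\overline{\F}_q$ is forced to be $\F_q$-rational, so the four lines excluded in the Lemma are indeed the only candidates over the algebraic closure.
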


\begin{lemma}\label{no-quadratic}
Let $\delta,\varepsilon \in \F_{q}^*$ satisfy $\delta^2 \varepsilon \neq 1$ and
$\delta^3\varepsilon^2 + (1 -3\delta)\varepsilon + 1  \neq 0$. Moreover, assume that one of the following holds:
\begin{itemize}
    \item [$i)$] $\varepsilon \neq 1$,
    \item [$ii)$] $\varepsilon =1 $ and either $q$ is an odd power of $2$, or $q \equiv 0, 2,3 \pmod 5$ odd.
\end{itemize}
Then, the curve $\mathcal{Q}: f(X,Y)=0$  (cf. \eqref{equazione curva}) has no quadratic components.
\end{lemma}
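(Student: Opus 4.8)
The plan is to argue by contradiction, assuming that $\mathcal Q$ has a quadratic component. Since Lemma~\ref{no-linear-components} excludes linear components and the leading coefficient $f_0=(1-\delta^2\varepsilon)(\delta\varepsilon-1)$ of \eqref{coefficients-f} is nonzero under the present hypotheses---when $\delta\varepsilon=1$ the curve is the absolutely irreducible cubic of Proposition~\ref{cubic-irreducible}, which has no quadratic component, so we may assume $\delta\varepsilon\neq1$ and $\deg f=4$---any quadratic component yields a factorization $f=q_1q_2$ into two absolutely irreducible conics. Homogenizing as $g(X,Y,Z)=Z^4f(X/Z,Y/Z)$, one reads the behaviour at infinity from $g(X,Y,0)=f_0X^2Y^2$: neither $q_i$ can be divisible by $Z$ (otherwise $f_0=0$), so the degree-two parts of $q_1,q_2$ are nonzero forms whose product is $f_0X^2Y^2$. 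Hence each is, up to a scalar, a product of two of the linear forms $X,Y$, and there are exactly two possibilities: \emph{Case A}, with leading forms $\{X^2,Y^2\}$, and \emph{Case B}, with leading forms $\{XY,XY\}$.

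The next step exploits the symmetry $f(X,Y)=f(Y,X)$, so that the involution $\tau\colon X\leftrightarrow Y$ permutes $\{q_1,q_2\}$. In Case~A, $\tau$ exchanges the leading forms $X^2$ and $Y^2$ and therefore swaps the two conics; after rescaling we may write
\begin{equation*}
q_1=X^2+\alpha XZ+\beta YZ+\gamma Z^2,\qquad q_2=f_0\bigl(Y^2+\beta XZ+\alpha YZ+\gamma Z^2\bigr),
\end{equation*}
with unknowns $\alpha,\beta,\gamma\in\overline{\F}_q$. In Case~B the common leading form $XY$ is $\tau$-invariant, so $\tau$ either fixes each conic---forcing $q_1,q_2$ to be symmetric---or swaps them; in each sub-case one writes $q_i=XY+(\text{linear})Z+(\text{const})Z^2$ and imposes the induced symmetry. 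Expanding $f=q_1q_2$ and matching the five remaining coefficients $f_1,\dots,f_5$ of \eqref{coefficients-f} then produces an over-determined polynomial system in the conic coefficients, with $\delta,\varepsilon$ as parameters.

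I would then eliminate the conic coefficients (by resultants) to obtain a necessary condition on $(\delta,\varepsilon)$ alone for such a factorization to exist. A conceptual check on the outcome is provided by the symmetric-function identity $f(X,Y)=\tilde f(X+Y,XY)$, where $\tilde f$ is the conic $f_0p^2+f_1sp+f_2s^2+(f_3-2f_2)p+f_4s+f_5$ in the variables $(s,p)=(X+Y,XY)$: the symmetric factorizations of Case~B correspond to $\tilde f$ splitting into two lines, governed by the non-degeneracy of the conic $\tilde f$, whereas the swapped factorizations correspond to the double cover $(X,Y)\mapsto(s,p)$ splitting over $\tilde f=0$, governed by the tangency of $\tilde f=0$ with the branch parabola $s^2=4p$. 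In both instances I expect the vanishing locus to factor through $\delta^2\varepsilon-1$ and through $\delta^3\varepsilon^2+(1-3\delta)\varepsilon+1$; since both are nonzero by hypothesis, the system is inconsistent and the factorization cannot exist---at least when $\varepsilon\neq1$, settling part~$i)$.

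The main obstacle is the degenerate case $\varepsilon=1$ of part~$ii)$. Here the elimination no longer forces a contradiction over $\overline{\F}_q$: a factorization can exist geometrically, and whether it is defined over $\F_q$---equivalently, whether the two conics, possibly a Frobenius-conjugate pair over $\F_{q^2}$, combine into an $\F_q$-rational factor of $f$---becomes an arithmetic question. I expect the residual obstruction to collapse to the $\F_q$-solvability of a single low-degree equation, of the shape ``a prescribed element is a square'' together with ``$x^5=c$ is solvable''. Its failure should be exactly captured by the stated hypotheses: the Artin--Schreier and non-square obstruction in characteristic two when $q=2^{2m+1}$, and the absence of nontrivial fifth roots of unity, hence of the required fifth power, when $q$ is odd with $q\equiv0,2,3\pmod5$. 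Pinning down this residual equation, tracking the field of definition of the conic coefficients, and matching its insolvability to the congruences on $q$ is the delicate heart of the proof; by contrast the cases $\varepsilon\neq1$ reduce to finite resultant computations that terminate in the forbidden factors of the hypothesis.
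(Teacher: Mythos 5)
Your skeleton coincides with the paper's: reduce to the quartic case via Proposition~\ref{cubic-irreducible}, observe that the leading form $f_0X^2Y^2$ forces the two conics to have leading forms $\{X^2,Y^2\}$ or $\{XY,XY\}$ (the paper reaches the same dichotomy through the system \eqref{zero-coeff}), and exploit the symmetry $f(X,Y)=f(Y,X)$ to constrain the factors. But the proposal stops at a plan precisely where the proof has to be carried out. For part $i)$ you write ``I expect the vanishing locus to factor through $\delta^2\varepsilon-1$ and through $\delta^3\varepsilon^2+(1-3\delta)\varepsilon+1$''; this expectation is correct but unverified, and it is exactly the content of the lemma. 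The paper does the eliminations explicitly: in your Case~A the matching of coefficients forces $f_2^2=f_0f_5$, i.e.\ $\delta(\delta^2\varepsilon-1)^2(\delta^3\varepsilon^2-3\delta\varepsilon+\varepsilon+1)=0$, a contradiction outright; in your Case~B with swapped conics it forces $f_5f_1^2=f_4^2f_0$ and $f_0f_1f_3=f_1^3-2f_0f_1f_2+2f_0^2f_4$, whose first identity factors as $\delta^2(\varepsilon-1)(\delta^2\varepsilon-1)^3(\delta^3\varepsilon^2-3\delta\varepsilon+\varepsilon+1)=0$; in Case~B with $\tau$-invariant conics one solves a quadratic for the constant terms and both roots contradict the hypotheses.

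The genuine gap is part $ii)$, which you yourself call ``the delicate heart'' and leave to a guessed mechanism that is not the right one. There is no issue of Frobenius-conjugate conics over $\F_{q^2}$, no field-of-definition tracking, and no condition of the shape ``$x^5=c$ is solvable'': in the surviving case (Case~B, swapped conics) the factor coefficients are rational expressions in the $f_i$ (e.g.\ $s_3=h_4/h_1$ in the paper's notation \eqref{s-t}), hence automatically lie in $\F_q$, and the two displayed identities in $\delta,\varepsilon$ force $\varepsilon=1$ \emph{and} $(\delta-1)^6\delta^2(\delta+1)^2(\delta+2)(\delta^2+3\delta+1)=0$, i.e.\ $\delta^2+3\delta+1=0$ after discarding the factors excluded by hypothesis. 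The arithmetic conditions in $ii)$ are precisely those under which $x^2+3x+1$ (discriminant $5$) has no root in $\F_q$: the congruences $q\equiv0,2,3\pmod 5$ for $q$ odd make $5$ a non-square, and $q=2^{2m+1}$ is the trace obstruction in characteristic two. So the residual obstruction is a single explicit quadratic in $\delta$ alone, not a norm or fifth-power condition. This case cannot be argued away by any rationality consideration, because when $\varepsilon=1$ and $\delta^2+3\delta+1=0$ does have a root in $\F_q$, the conic \eqref{conic} really is a component of $\mathcal{Q}$ --- a fact the paper subsequently uses in the proof of Theorem~\ref{t:conj}. Any correct proof of $ii)$ must therefore land on exactly this quadratic, and your proposal in its current form does not.
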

\begin{proof}
If $\mathcal{Q}$ is a cubic, then the thesis follows from Proposition \ref{cubic-irreducible}. So, we assume that $\mathcal{Q}:f(X,Y)=0$ is a quartic. In particular, $f_0\neq 0$, which is equivalent to $\delta \varepsilon \neq 1$. Let us suppose that $\mathcal{Q}$ splits into  two absolutely irreducible conics. Hence, $f(X,Y)=a(X,Y) \cdot b(X,Y)$ where 
\begin{align*}
&a(X,Y)=a_0Y^2+a_1XY+a_2X^2+a_3Y+a_4X+a_5\\
&b(X,Y)=b_0X^2+b_1XY+b_2Y^2+b_3X+b_4Y+b_5, 
\end{align*}
both belonging to $\F_q[X,Y]$. Note that, since $f(X,Y)$ has no term in $X^4$,$Y^4$, $X^3$, $Y^3$ and in $X^3Y$ and $XY^3$ then 
\begin{equation}\label{zero-coeff}
    \begin{cases}
    a_0 b_2=0\\
    a_2 b_0=0\\
    a_4 b_0 + a_2 b_3=0\\
    a_3 b_2 + a_0 b_4=0\\
    a_1 b_0 + a_2 b_1=0\\
    a_0 b_1 + a_1 b_2=0
    \end{cases}
\end{equation}
\textbf{Case 1.} $a_0\neq 0$. This implies in the system above  that $b_1=b_2=b_4=0$. Since $b(X,Y)$ is polynomial of degree $2$, then $b_0 \neq 0$. So, we get $a_1=a_2=a_4=0$ and hence
\begin{equation}
a(X,Y)=a_0Y^2+a_3Y+a_5 \quad \textnormal{ and } \quad  b(X,Y)=b_0X^2+b_3X+b_5.
\end{equation}
Let $h(X,Y):=f_0^{-1} \cdot f(X,Y)$. Since $f_0=a_0b_0$ (cf. \eqref{coefficients-f}), we have 
\begin{equation}
\begin{split}
h(X,Y)=( Y^2+s_1 Y+s_2)(X^2+t_1 X+t_2)
\end{split}
\end{equation}
where $s_i=a_{2i+1}/a_0$ and  $t_i=b_{2i+1}/b_0$, $i \in \{1,2\}$.
Since  $h(X,Y)=h(Y,X)$, $s_1=t_1$ and $s_2=t_2$ hold.
It follows that 
\[
s_2^2=f_5/f_0,\quad
s_1^2=f_3/f_0,\quad
s_1=f_1/f_0,\quad
s_1s_2=f_4/f_0, \quad
s_2=f_2/f_0.
\]
This implies $f_2^2=f_0f_5$, namely (cf. \eqref{coefficients-f})
\[
\delta  \left(\delta ^2 \varepsilon -1\right)^2 \left(\delta ^3 \varepsilon ^2-3 \delta  \varepsilon +\varepsilon +1\right)=0
\]
which contradicts our hypotheses.\\

\smallskip
\noindent \textbf{Case 2.} $a_0=0$. If $a_2 \neq 0$, by System  \eqref{zero-coeff} we get $b_0=b_1=b_3=0$. Then $b_2 \neq 0$ and $a_1=a_3=0$, getting 
\begin{equation}
a(X,Y)=a_2X^2+a_4X+a_5 \quad \textnormal{ and } \quad  b(X,Y)=b_2Y^2+b_4Y+b_5.
\end{equation}
Similarly, this leads to a contradiction  as discussed in \textbf{Case 1}.\\
If $a_2=0$, since $a_1 \neq 0$ we get $b_0=b_2=0$ and hence 
\begin{equation}
a(X,Y)=a_1XY+a_3Y+a_4X+a_5 \quad \textnormal{ and } \quad  b(X,Y)=b_1XY+b_3X+b_4Y+b_5 .
\end{equation}
Let $h(X,Y):=f_0^{-1} \cdot f(X,Y)$. Since $f_0=a_1b_1$ (cf. \eqref{coefficients-f}), we have 
\begin{equation}\label{factor-h}
\begin{split}
h(X,Y)=( XY+s_1 Y+s_2X+s_3)(XY+t_1X+t_2Y+t_3),
\end{split}
\end{equation}
where $s_i=a_{i+2}/a_1$ and  $t_i=b_{i+2}/b_1$, $i \in \{1,2,3\}$.
Hence, the following conditions hold:
\begin{equation}\label{s-t}
    \begin{cases}
        s_1+t_2=h_1\\
        s_2 + t_1=h_1\\
        s_2 t_1=h_2\\
        s_1t_2=h_2\\
        s_3 + s_1 t_1 + s_2 t_2 + t_3=h_3\\
        s_3 t_1 + s_2 t_3=h_4\\
        s_3 t_2 + s_1 t_3=h_4\\
        s_3t_3=h_5
        
    \end{cases}
\end{equation}
where $h_i=f_i/f_0$, $i \in \{1,2,3,4,5\}$. By \eqref{equazione curva}, $f_2 \neq 0$ and hence $h_2 \neq 0$. By System \eqref{s-t}, $s_1,s_2,t_1$ and $t_2$ are not zero.

Multiplying by $s_1$ and by $s_2$ the first and second equations  yields respectively
\begin{equation}
    s_1^2+s_1t_2=h_1s_1 \quad \textnormal{ and } \quad s_2^2+s_2t_1=h_1s_2.
\end{equation}
Subtracting the two expressions above and using  that $s_1t_2=h_2=s_2t_1$ gives
\begin{equation}\label{s}
    s_2^2-s_1^2=h_1(s_2-s_1).
\end{equation}
Similarly, by interchanging the role of $s_i$ and $t_i$, we get
\begin{equation}\label{t}
    t_2^2-t_1^2=h_1(t_2-t_1).
\end{equation} 
\textit{Case 2.1} $s_1 \neq s_2$. Then by \eqref{s},  $s_1+s_2=h_1$. By the first two equations of the System in \eqref{s-t}, we have $s_1=t_1$ and $s_2=t_2$. Moreover, by the third-last and the second-last equations in System \eqref{s-t}, we get $s_3=t_3$ and hence System \eqref{s-t} reduces  into 
\begin{equation}\label{reduced-system}
\begin{cases}
    s_1+s_2=h_1\\
    s_1s_2=h_2\\
2s_3+s_1^2+s_2^2=h_3\\
s_3(s_1+s_2)=h_4\\
s_3^2=h_5.
\end{cases}
\end{equation}
If $h_1=0$, then $f_1=f_4=0$. This implies that $\varepsilon=1$ and either $\delta=1$ or $\delta=-2$, which in each case leads to a contradiction to the assumption $\delta^3 \varepsilon^2 + (1-3\delta)\varepsilon + 1 \neq 0$. 
Hence $h_1 \neq 0$.\\
By the second-last equation   in \eqref{reduced-system}, $s_3=h_4/h_1$. Hence $h_5=s_3^2=h^2_4/h^2_1,$
getting 
\begin{equation}\label{epsilon=1}
    f_5f_1^2= f_4^2f_0.
\end{equation}
Combining the equations of  \eqref{reduced-system}, we also have that 
$$h_3=2s_3+(s_1+s_2)^2-2s_1s_2=2h_4/h_1+h_1^2-2h_2;$$
equivalently, 
\begin{equation}\label{f-exp}
    f_0f_1f_3=f_1^3 - 2f_0f_1f_2 + 2f_0^2f_4.
\end{equation}
Substituting the coefficients of $f(X,Y)$ (cf. \eqref{equazione curva}) in 
 \eqref{epsilon=1}, we get
\begin{equation*}
\delta ^2 (\varepsilon -1) \left(\delta ^2 \varepsilon -1\right)^3 \left(\delta ^3 \varepsilon ^2-3 \delta  \varepsilon +\varepsilon +1\right)=0,
\end{equation*}
that implies $\varepsilon=1$,
and combining it with \eqref{f-exp}
$$(\delta -1)^6 \delta ^2 (\delta +1)^2 (\delta +2) \left(\delta ^2+3 \delta +1\right)=0.$$
In our assumptions, the expression above is zero if and only if $\delta^2+3\delta +1 = 0$. Then, if $q=2^{2m+1}$ for some non-negative integer $m$ or $q \equiv 2,3 \pmod 5$ odd, the equation $x^2+3x+1=0$ has no roots in $\F_q$. If $q \equiv 0 \pmod 5$ odd, by hypotheses $\delta \neq 1$ and hence $\delta^2+3\delta+1\neq 0$. Then, in any case, we get a contradiction.\\
\textit{Case 2.2} $s_1=s_2$. By System \eqref{s-t}, $t_1=t_2$. Now, since $s_1+t_1=h_1$ and $s_1t_1=h_2$, we obtain that $s_1$ and $t_1$ are the solutions of the equation:
\[
x^2-\frac{2-\delta^2\varepsilon-\delta\varepsilon}{\delta\varepsilon-1}x+\frac{\delta^2\varepsilon-1}{\delta\varepsilon-1}=0.
\]
Let $\xi:=\delta\varepsilon-1$ and $\eta:=\delta^2\varepsilon-1$. Then, the equation can be written as
\[
\xi x^2+(\xi + \eta)x+\eta=0;
\]
whose solutions are $-1$ and $-\eta/\xi=\frac{1-\delta^2\varepsilon}{\delta\varepsilon-1}$.\\
By \eqref{factor-h}, assuming $s_1=s_2=-1$ and $t_1=t_2=\frac{1-\delta^2\varepsilon}{\delta\varepsilon-1}$ is equivalent to assuming $t_1=t_2=-1$ and $s_1=s_2=\frac{1-\delta^2\varepsilon}{\delta\varepsilon-1}$. Then, without loss of generality, we may choose $s_1=s_2=-1$ and   $t_1=t_2=\frac{1-\delta^2\varepsilon}{\delta\varepsilon-1}$.\\
Let us consider the equations $s_3 t_1 + s_2 t_3=h_4$ and $s_3t_3=h_5$, we have $t_3=s_3 \frac{1-\delta^2 \varepsilon} {\delta \varepsilon -1}-h_4$, hence $s_3$ is solution of the following equation 

$$\frac{1-\delta^2 \varepsilon}{\delta \varepsilon -1} x^2-h_4x-h_5=0,$$
that is equivalent to
$$(1-\delta^2 \varepsilon)x^2+(\delta ^2 \varepsilon +\delta -2)x+(1-\delta)=0.$$
We get that either $s_3=1$ or $s_3=\frac{1-\delta}{1-\delta^2 \varepsilon}$. If $s_3=1$, the equation $s_3 + s_1 t_1 + s_2 t_2 + t_3=h_3$ reads as
\[
\delta ^3 \varepsilon  \left(\delta ^3 \varepsilon ^2-3 \delta  \varepsilon +\varepsilon +1\right)=0
\]
against our hypothesis.\\
Similarly, if $s_3=\frac{1-\delta}{1-\delta^2 \varepsilon}$,  the equation $s_3 + s_1 t_1 + s_2 t_2 + t_3=h_3$ reads as
$$
\delta ^2 \varepsilon  \left(\delta ^2 \varepsilon -1\right)=0
$$
in contradiction with our hypothesis.
This concludes the proof.
\end{proof}

\begin{remark}\label{remark: conic} Note that if $\varepsilon =1$ and either $q$ is an even power of $2$ or  $q \equiv 1,4 \pmod 5$  odd, by the proof of Lemma \ref{no-quadratic} \textit{Case 2.1}, the conic with equation 
\begin{equation}\label{conic}
XY-(\delta+1)X-Y+1=0
\end{equation}
is a component of $\mathcal{Q}$ where $\delta \in \F_q$ satisfies $\delta^2+3\delta+1=0$.
\end{remark}
We are now ready to prove Theorem \ref{t:conj}.
\begin{proof}[Proof of Theorem \ref{t:conj}]
Assume first that $\varepsilon \neq 1$ or that $\varepsilon=1$ and $q$ either an odd power of $2$ or $q \equiv 0,2,3 \pmod 5$ odd. Then, from Lemma \ref{quartic curve}, \ref{no-linear-components},  \ref{no-quadratic} and Proposition \ref{cubic-irreducible}, the variety $\mathcal{V}$ is an absolutely irreducible curve of genus at most three.
The Hasse-Weil Theorem implies that for $q\geq 37$ the variety $\mathcal{V}$ has an $\Fq$-rational point.
For $q<37$, the result has been directly checked by a GAP script, available at \href{https://pastebin.com/PHQJnAq0}{https://pastebin.com/PHQJnAq0}.\\
If $\varepsilon =1$ and either $q$ is an even power of $2$ or $q \equiv 1,4 \pmod 5$ odd, we deduce the result from Remark~\ref{remark: conic}. 
In this case, we claim indeed that $\mathcal{Q}$ has an affine point of the form $(\ell,\ell^{q^s})$ and using such point, we are able to show explicitly an affine point of type $(\ell,\ell^{q^s},\ell^{q^{2s}},\ell^{q^{3s}},\ell^{q^{4s}})$ of $\mathcal{C}$ (cf.\ \eqref{equazioni}) which corresponds to an $\mathbb{F}_q$-rational point of $\mathcal{V}$. 
Putting $Y=X^{q^s}$ in \eqref{conic}, we obtain
\[
X^{q^s+1}-(\delta+1)X-X^{q^s}+1=0.
\]
This has a solution if and only if $\xi^{q^s+1}-\delta\xi-\delta=0$, where $\xi=X-1$.
To show that this equation has a solution in $\mathbb F_q$ we apply \cite[Theorem 8]{SheekeyMcGuire}.
Define 
\[
M=\begin{pmatrix}
0&\delta\\
1&\delta
\end{pmatrix}^5.
\]
If $q=2^{2m}$ for some positive integer $m$, then the $(2,2)$-entry of $M$ (that is $G_5$  in the notation of \cite{SheekeyMcGuire}) is equal to $\delta ^3 (\delta +1) (\delta +3)$ and lies in $\F_q$. Hence, by \cite[Theorem 8]{SheekeyMcGuire}, the equation $\xi^{q^s+1}-\delta\xi-\delta=0$ has at least one root.

If $q \equiv 1,4 \pmod 5$ is odd, \cite[Theorem 8]{SheekeyMcGuire} shows that the same equation has a solution if and only if
\[
\Delta=\mathrm{Tr}(M)^2-4\det(M)
\]
is a square in $\F_q$. A direct computation gives
\[
\Delta=\delta^5(\delta+4)(\delta^2+3\delta+1)^2=0,
\]
so $\Delta$ is a square in $\F_q$, and again the equation has a solution. Therefore there exists a point of the form $(\ell,\ell^{q^s})$ on the conic \eqref{conic}, and hence on $\mathcal{Q}$.

By \eqref{conic}, $\ell \neq 1$ and
\begin{equation}\label{lqs}
\ell^{q^s}=\frac{(\delta+1)\ell-1}{\ell-1}.
\end{equation}
By the expression above,
\begin{equation}\label{lq2s}
    \ell^{q^{2s}}=\frac{(\delta+1)\ell^{q^s}-1}{\ell^{q^s}-1}=\frac{(\delta +2) \ell-1}{\ell}.
\end{equation}
By Lemma \ref{quartic curve}, $\mathcal{Q}$ is birationally equivalent to $\cC$ and a point $(\ell,\ell^{q^s})$ of $\mathcal{Q}$ corresponds to a point $(\ell,\ell^{q^s},\bar{c},\bar{d},\bar{e})$ of $\cC$ with 
\begin{equation}\label{primabarc}
\bar{c}=\frac{(\ell^{q^s} \delta^2    - \ell^{q^s} - \delta + 1)}{\ell( \ell^{q^s} \delta    -  \ell^{q^s} -  \delta^2    + 1 )}, \quad  \bar{d}= \frac{(\ell^{q^s+1} \bar{c} \delta    - \ell^{q^s+1} + \ell- \ell^{q^s} \delta^2    + \ell^{q^s} - 1)}{\ell^{q^s+1}\bar{c} \delta   },
\end{equation}
and $\bar{e}=-\frac{1}{\ell^{q^s+1}\bar{c}\bar{d}}$ (cf.\ \eqref{variableC}, \eqref{variableD}, and \eqref{variableE}). Substituting Equation \eqref{lqs} in the expression of $\bar{c}$, we have that
\begin{equation}\label{barc}
    \bar{c}=\frac{(\ell^{q^s} \delta^2    - \ell^{q^s} - \delta + 1)}{\ell( \ell^{q^s} \delta    -  \ell^{q^s} -  \delta^2    + 1 )}=\frac{(\delta +2) \ell-1}{\ell},
\end{equation}
getting $\bar{c}=\ell^{q^{2s}}$. By \eqref{lqs} and \eqref{lq2s}, $$\ell^{q^{3s}}=\frac{-\delta +\delta ^2 \ell +3 \delta  \ell +\ell -1}{\delta  \ell +\ell -1},$$
while by \eqref{primabarc}, \eqref{lqs} and \eqref{barc},
$$\bar{d}=\frac{\delta +\delta ^2 \ell ^2+3 \delta  \ell ^2+\ell ^2-\delta ^2 \ell -3 \delta  \ell -2 \ell +1}{(\delta  \ell +\ell -1) (\delta  \ell +2 \ell -1)}.$$
Let us consider
\begin{equation}
    \bar{d}-\ell^{q^{3s}}=-\frac{\left(\delta ^2+3 \delta +1\right) \ell }{\delta  \ell +2 \ell -1}.
\end{equation}
Since $\delta^2+3 \delta +1 =0$, it follows that $\bar{d}=\ell^{q^{3s}}.$ 
An analogous computation shows that $\bar{e}=\ell^{q^{4s}}$, which completes the proof. 
\end{proof}

\section{\texorpdfstring{On the case $\rk A=\rk B=4$}{On the case rk A=rk B=4}}\label{s:rkAB4}

\subsection{Geometric description}\label{ss:geo}
Now assume that $\wp_{\Gamma}(\Sigma)$ is a MSLS not of pseudoregulus type, that $\Sigma$ is the set of all points $[x_0,\ldots,x_4]$ of $\PG(4,q^5)$ with coordinates rational 
over $\Fq$, and that $\Gamma$ has $\rk A=\rk B=4$
for any choice of $\sigma$. Thus $\sigma:(x_0,\ldots,x_4)\mapsto(x_0^q,\ldots,x_4^q)$ may be assumed.
Let $S_A=\la A,A_1,\ldots,A_4\ra$ and $S_B=\la B,B_1,\ldots,B_4\ra$.

It may be assumed that $S_A$ and $S_B$ have equations $x_0=0$ and $x_4=0$, respectively.
Since the points $A$ and $B$ have rank four,
\begin{equation}\label{e:AB4}
A=[0,a_1,a_2,a_3,1]\text{ and }B=[1,b_1,b_2,b_3,0].
\end{equation}
The dependence of $A$, $A_1$, $B$, and $B_2$ gives
\begin{equation}\label{e:cpln}
    \rk
    \begin{pmatrix}
    a_1^q-a_1&a_2^q-a_2&a_3^q-a_3\\
    b_1^{q^2}-b_1&b_2^{q^2}-b_2&b_3^{q^2}-b_3
    \end{pmatrix}=1,
\end{equation}
and $G$ of coordinates $[0, a_1^q-a_1,a_2^q-a_2,a_3^q-a_3,0]$ is the intersection of the lines $\la A,A_1\ra$, and $\la B,B_2\ra$.

The coordinates of the solids joining $\Gamma$ and the points of $\Sigma$ are the minors of
\[
\begin{pmatrix}
0&a_1&a_2&a_3&1\\ 1&b_1&b_2&b_3&0\\ 0&a_1^q-a_1&a_2^q-a_2&a_3^q-a_3&0\\ u_0&u_1&u_2&u_3&u_4
\end{pmatrix},\quad (u_0,\ldots,u_4)\in(\mathbb F_q^5)^*.
\]
Intersecting with the line $x_0=x_1=x_4=0$ gives the following form for the linear set:

\begin{align*}
\left\{\la \left( \right. \right.&  m_3(x_1+b_1x_0+a_1x_4)-m_1(x_3+b_3x_0+a_3x_4),
m_2(x_1+b_1x_0+a_1x_4)\\
&\left. \left.-m_1(x_2+b_2x_0+a_2x_4) \right)\ra_{\Fqc}\colon (x_0,\ldots,x_4)\in\F_q^5\setminus\{(0,\ldots,0)\}
\right\},
\end{align*}
where $(m_1,m_2,m_3)=( a_1^q-a_1,a_2^q-a_2,a_3^q-a_3)$.
This can be seen as
\[
\begin{pmatrix}
m_3&0&-m_1\\ m_2&-m_1&0
\end{pmatrix}
\left[
x_0
\begin{pmatrix}
   b_1\\ b_2\\ b_3 
\end{pmatrix}
+x_4
\begin{pmatrix}
   a_1\\ a_2\\ a_3 
\end{pmatrix}
+\begin{pmatrix}
    x_1\\ x_2\\ x_3
\end{pmatrix}\right],
\]
that is, 
as the projection from the vertex $M=[m_1,m_2,m_3]$ of the linear set in $\PG(2,q^5)$ defined by the $\Fq$-subspace
\begin{equation}\label{e:Uper4}
    U_{\mathbf a,\mathbf b}=\la\mathbf a,\mathbf b\ra_{\Fq}+\F_{q}^3,
\end{equation}
where $\mathbf a=(a_1,a_2,a_3)$, $\mathbf b=(b_1,b_2,b_3)$.

Summarizing:
\begin{theorem}\label{paperino}
  Let $\mathbb L=\wp_\Gamma(\Sigma)$ be a maximum scattered linear set in $\PG(1,q^5)$ 
  not of pseudoregulus type, and assume that $\rk A=\rk B=4$.
  Then $\mathbb L$ can be represented as a projection from the vertex $M=[m_1,m_2,m_3]$ of the linear set in $\PG(2,q^5)$ defined by the $\Fq$-subspace $U_{\mathbf a,\mathbf b}$ in \eqref{e:Uper4}.
  Conversely, if $M=[ a_1^q-a_1,a_2^q-a_2,a_3^q-a_3]$ has rank three in $\PG(2,q^5)$ and \eqref{e:cpln} holds, then the projection from $M$ of $U_{\mathbf a,\mathbf b}$ is, if maximum scattered, a linear set in $\PG(1,q^5)$ 
  not of pseudoregulus type such that $\rk A=\rk B=4$.
\end{theorem}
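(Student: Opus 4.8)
The plan is to realise both implications through a single explicit computation of the projection and to read it in the two directions. For the direct statement I would first secure the normalisations made before the theorem. Since $\rk A=\rk B=4$, the solids $S_A,S_B$ are $\sigma$-invariant, hence $\Fq$-rational, and distinct (were $S_A=S_B$, all ten points $A_i,B_j$ would lie in a single solid, which is excluded as in Proposition~\ref{linearlyindepedent}); so an element of $\PGL(5,q)$ stabilising $\Sigma$ carries $(S_A,S_B)$ to $(\{x_0=0\},\{x_4=0\})$, and since $A\notin S_B$ and $B\notin S_A$ one may take $A=[0,a_1,a_2,a_3,1]$ and $B=[1,b_1,b_2,b_3,0]$. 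The alignment of the frame (Lemma~\ref{l:AB}, Proposition~\ref{frame}) yields \eqref{e:cpln} and shows that $G=A_1-A=\kappa^{-1}(B_2-B)=[0,m_1,m_2,m_3,0]$ (with $m_i=a_i^q-a_i$) is the common point of $\la A,A_1\ra$ and $\la B,B_2\ra$, whence $\Gamma=\la A,B,G\ra$. The heart of the argument is to intersect $\la\Gamma,P_u\ra$ with $\ell:x_0=x_1=x_4=0$ for $P_u=[u_0,\dots,u_4]\in\Sigma$: the two governing $4\times4$ minors are $m_3(u_1+b_1u_0+a_1u_4)-m_1(u_3+b_3u_0+a_3u_4)$ and $m_2(u_1+b_1u_0+a_1u_4)-m_1(u_2+b_2u_0+a_2u_4)$, which equal $\left(\begin{smallmatrix}m_3&0&-m_1\\ m_2&-m_1&0\end{smallmatrix}\right)$ applied to $u_0\mathbf b+u_4\mathbf a+(u_1,u_2,u_3)$. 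As $(u_0,\dots,u_4)$ runs over $\Fq^5$ this vector runs over $U_{\mathbf a,\mathbf b}$, and the kernel of the $2\times3$ matrix is $\la(m_1,m_2,m_3)\ra$; hence $\wp_\Gamma(\Sigma)$ is exactly the projection from $M=[m_1,m_2,m_3]$ of the linear set $L_{U_{\mathbf a,\mathbf b}}$.

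For the converse I would reverse this computation: from $\mathbf a,\mathbf b$ with $\rk M=3$ and \eqref{e:cpln}, set $A,B,G$ as above and $\Gamma=\la A,B,G\ra$; the same minor identity identifies the projection from $M$ of $U_{\mathbf a,\mathbf b}$ with $\wp_\Gamma(\Sigma)$, so if the former is maximum scattered then $\wp_\Gamma(\Sigma)$ is a maximum scattered linear set of $\PG(1,q^5)$. The rank assertion is then linear algebra over the Frobenius orbits: from $A_{i+1}-A_i=G_i$ and $B_{i+2}-B_i=\kappa^{q^i}G_i$ (the latter by \eqref{e:cpln}) and $\gcd(2,5)=1$, these differences span $\la G,G_1,\dots,G_4\ra$, which is a plane precisely when $\rk M=3$; as $A$ and $B$ lie off the plane $\{x_0=x_4=0\}$ containing every $G_j$, both $\la A,A_1,\dots,A_4\ra$ and $\la B,B_1,\dots,B_4\ra$ are solids, i.e.\ $\rk A=\rk B=4$ (here $\kappa\neq0$, since $\kappa=0$ would force $B\in\Sigma$ and $\rk B=1$).

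The step I expect to be the main obstacle is verifying that $\wp_\Gamma(\Sigma)$ is not of pseudoregulus type. By Theorem~\ref{t:CsZa20162} this means $\dim(\Gamma\cap\Gamma^\sigma)=0$, and since $A_1\in\Gamma\cap\Gamma^\sigma$ one must exclude that this intersection is a line. Computing $\la\Gamma,\Gamma^\sigma\ra=\la A,B,G,B_1,G_1\ra$ and projecting onto the $(x_0,x_4)$-coordinates reduces the claim to $\dim\la G,G_1,B-B_1\ra=3$ inside $\{x_0=x_4=0\}$; expanding $B-B_1=-\kappa G-\kappa^{q^2}G_2-\kappa^{q^4}G_4$ via \eqref{e:cpln} turns this into the non-degeneracy condition $\kappa^{q^2}G_2+\kappa^{q^4}G_4\notin\la G,G_1\ra$, a genuine constraint on the Frobenius images of $M$ that is not a formal consequence of $\rk M=3$ alone. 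I expect that precisely here the hypothesis that the projection be maximum scattered must be invoked—equivalently, that $A$ and $B$ are genuine points by Lemma~\ref{l:AB}—in order to discard the degenerate alignment that would produce the pseudoregulus configuration. Once this is settled, $A=\Gamma\cap\Gamma^{\sigma^4}$ and $B=\Gamma\cap\Gamma^{\sigma^3}$ are exactly the points of the statement, and both directions are complete.
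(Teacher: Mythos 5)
Your forward direction is correct and is essentially the paper's own computation (normalisation of $S_A,S_B$ to $x_0=0$, $x_4=0$, the frame yielding \eqref{e:cpln} and the point $G$, and the minor identity exhibiting $\wp_\Gamma(\Sigma)$ as the projection from $M$ of the linear set defined by $U_{\mathbf a,\mathbf b}$); the extra justifications you supply for the normalisations are fine. The genuine gap is in the converse, exactly at the step you single out as the main obstacle. Having correctly reduced ``$\Gamma\cap\Gamma^{\sigma}$ is a point'' to $\kappa^{q^2}G_2+\kappa^{q^4}G_4\notin\la G,G_1\ra$ --- equivalently, since summing the relations $B_{i+2}-B_i=\kappa^{q^i}G_i$ over $i$ gives $\sum_{i=0}^4\kappa^{q^i}G_i=0$ and $\kappa\neq0$, to $G_3\notin\la G,G_1\ra$ --- you assert that this is not a formal consequence of $\rk M=3$ and propose to obtain it from maximum scatteredness, ``equivalently, that $A$ and $B$ are genuine points by Lemma~\ref{l:AB}''. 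This does not work: Lemma~\ref{l:AB} is stated and proved under the standing hypothesis that the linear set is \emph{not} of pseudoregulus type, so invoking it to establish precisely that conclusion is circular; nor can maximum scatteredness alone help, since pseudoregulus-type linear sets are themselves maximum scattered. Moreover your diagnosis is wrong: the condition \emph{is} a formal consequence of $\rk M=3$. If $G_3=aG+bG_1$, apply $\sigma$ twice and use $G_5=G$ to get $(1-ab^{q^2})G-b^{q^2+1}G_1-a^{q^2}G_2=0$; since $\rk M=3$ means $m_1,m_2,m_3$ are $\Fq$-independent, hence $G,G_1,G_2$ are $\Fqc$-independent (Moore-matrix argument), this forces $a^{q^2}=0$ and $ab^{q^2}=1$ simultaneously, a contradiction. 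So the step you left open has a three-line proof from the stated hypotheses, and the substitute you propose for it would fail.

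There is a second omission in the same step: Theorem~\ref{t:CsZa20162} gives pseudoregulus type as soon as $\dim(\Gamma\cap\Gamma^{\tau})=1$ for \emph{some} generator $\tau$ of $\mathbb G$, so excluding a line in $\Gamma\cap\Gamma^{\sigma}$ is not enough; you must also treat $\tau=\sigma^2$ (the cases $\sigma^3,\sigma^4$ then follow because $\Gamma\cap\Gamma^{\tau^{-1}}=(\Gamma^{\tau}\cap\Gamma)^{\tau^{-1}}$ has the same dimension as $\Gamma\cap\Gamma^{\tau}$). That case is in fact immediate from your own identities: $\la\Gamma,\Gamma^{\sigma^2}\ra=\la A,B,G,G_1,G_2\ra$, using $A_2=A+G+G_1$ and $B_2=B+\kappa G$, and this is all of $\PG(4,q^5)$ when $\rk M=3$, so $\Gamma\cap\Gamma^{\sigma^2}$ is the single point $B_2$. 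With these two repairs your converse closes, and it then shows --- as the paper's terse statement tacitly asserts --- that the non-pseudoregulus conclusion needs only $\rk M=3$ and \eqref{e:cpln}, scatteredness entering only to apply Theorem~\ref{t:CsZa20162} and to force $\kappa\neq0$.
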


Note that if $M$ has rank three, then $a_1^q-a_1$, $a_2^q-a_2$, $a_3^q-a_3$ are $\Fq$-linearly dependent, and this implies that $A=[0,a_1,a_2,a_3,1]$ has rank four.

\subsection{Algebraic description}\label{ss:alg}

\begin{sloppypar}
As above, $s=1$ may be assumed.
For any two distinct points 
$P_i=\la(x^{(i)}_0,x^{(i)}_1,x^{(i)}_2,x^{(i)}_3,x^{(i)}_4)_\cB\ra_{\Fqc}$,
$i=1,2$, in $\Sigma$, the points $A$, $A_1$, $B_2$, $P_1$, $P_2$ are independent;
equivalently,
\[
\begin{vmatrix} 1&-\lambda^{q^2}&\mu^{q^2}\\ x_2^{(1)}&x_3^{(1)}&x_4^{(1)}\\
x_2^{(2)}&x_3^{(2)}&x_4^{(2)}\end{vmatrix}\neq0.
\]
\end{sloppypar}
This implies that the following linear set is scattered, and projectively equivalent to $\mathbb L=\wp_{\Gamma}(\Sigma)$:
\begin{equation}\label{fgenerale}
  L=\{\la(\mu^{q^2} x_2-x_4,\mu^{q^2}x_3+\lambda^{q^2}x_4)\ra_{\Fqc}\colon
  \la(x_0,\ldots,x_4)_\cB\ra_{\Fqc}\in\Sigma\}.
\end{equation}

By \eqref{howe} and Proposition~\ref{prop:linearcombination}, it may be assumed that $\mu=1$.

From \eqref{eq23a} and $e=0$:
\[
a=b^{q^4+q^2+1},\ c=b^{-q^3},\ d=-b^{q^2+1},\ \N_{q^5/q}(b)=-1.
\]
A $w\in\Fqc^*$ exists such that $b=-w^{1-q^3}$.
Hence
\begin{equation}\label{e:abcdw}
a=-w^{q^4-q^3},\ b=-w^{1-q^3},\ c=-w^{q-q^3},\ d=-w^{q^2-q^3}.
\end{equation}
If $\lambda\neq1$, then, by \eqref{coord-u4}, $w=\lambda-1$ may be assumed.

The representing vectors for $\Sigma$ are obtained from $M_1X^q=X$, where
\begin{equation}\label{e:m1}
M_1=\begin{pmatrix}
0&0&0&-w^{q^4-q^3}&1\\ 1&0&0&-w^{1-q^3}&-\lambda\\ 0&1&0&-w^{q-q^3}&1\\ 
0&0&1&-w^{q^2-q^3}&-\lambda^{q^2}\\ 0&0&0&0&1
\end{pmatrix}
\end{equation}
By solving the equations in term of $x_3=t$ and $x_4=\theta$,
one obtains $\theta\in\Fq$, and
\begin{align}
x_0&=-w^{q^4-q^3}t^q+\theta\notag\\
x_1&=-w^{1-q^4}t^{q^2}-w^{1-q^3}t^q+(1-\lambda)\theta\notag\\
x_2&=-w^{q-1}t^{q^3}-w^{q-q^4}t^{q^2}-w^{q-q^3}t^q+(2-\lambda^q)\theta\label{e:x2}\\
&-w^{q^2}(w^{-q}t^{q^4}+w^{-1}t^{q^3}+w^{-q^4}t^{q^2}+w^{-q^3}t^q+w^{-q^2}t)
+2(1-\lambda^{q^2})\theta=0.\notag
\end{align}
The last equation is equivalent to 
\begin{equation}\label{e:x3}
w^{q^2}\tr_{q^5/q}(w^{-q^2}t)=2(1-\lambda^{q^2})\theta.
\end{equation}

The condition $\rk B=4$ implies that the rank of the following matrix containing the coordinates of $v$, $v_1$, $v_2$, $v_3$, $v_4$ with respect the basis $\mathcal B$ is four:
\[
C=\begin{pmatrix}
1&-\lambda&1&-\lambda^{q^2}&1\\ 
-\lambda^{q^3}a+1&-\lambda^{q^3}b-\lambda+1&-\lambda^{q^3}c+1-\lambda^q&1-\lambda^{q^3}d-\lambda^{q^2}&1\\
0&0&1&-\lambda^{q^2}&1\\
-\lambda^{q^3}a+1&-\lambda^{q^3}b-\lambda&-\lambda^{q^3}c+1&1-\lambda^{q^3}d-\lambda^{q^2}&1\\
0&0&0&0&1
\end{pmatrix}.
\]
The rank of $C$ is four if and only if
\begin{equation}\label{e:rkabcd}
\lambda^{q^3+q^2+q+1}a+\lambda^{q^3+q^2+q}b+\lambda^{q^3+q^2}c+\lambda^{q^3}d-1=0.
\end{equation}
For $\lambda=1$ this leads to $a+b+c+d-1=0$, equivalent by \eqref{e:abcdw} to $\tr_{q^5/q}(w)=0$.
For $\lambda\neq1$, since $w=\lambda-1$, the equations \eqref{e:abcdw} are equivalent to
\[
a=-\frac{\lambda^{q^4}-1}{\lambda^{q^3}-1},\ 
b=-\frac{\lambda-1}{\lambda^{q^3}-1},\ 
c=-\frac{\lambda^{q}-1}{\lambda^{q^3}-1},\ 
d=-\frac{\lambda^{q^2}-1}{\lambda^{q^3}-1}.
\] 
Combining with \eqref{e:rkabcd}, $\N_{q^5/q}(\lambda)=1$ results.

The following straightforward result will be used in the proof of the Theorem~\ref{t:fecc}.
\begin{proposition}\label{p:sostituz}
Let $\rho\in\Fqc$ such that $\tr_{q^5/q}(\rho)\neq0$; 
let $g:\Fqc\rightarrow\Fqc$ be an $\Fq$-linear map such that $\ker g=\Fq$ and $\tr_{q^5/q}\circ g$ is the zero map.
Also let
\begin{align*}
S&=\{(\theta,y)\colon \theta\in\Fq,\,y\in\Fqc,\,\tr_{q^5/q}(y)=0\},\\
T&=\{(\theta,y)\colon \theta\in\Fq,\,y\in\Fqc,\,\tr_{q^5/q}(y)+2\theta=0\}.
\end{align*}
Then the maps
\begin{align}
&\alpha:\Fqc\rightarrow S,\ x\mapsto(\tr_{q^5/q}(\rho x),g(x)),\label{e:alpha}\\ 
&\beta:\Fqc\rightarrow T,\ x\mapsto(\tr_{q^5/q}(x), -x-x^{q^2})\label{e:beta}
\end{align}
are well-defined and bijective.
\end{proposition}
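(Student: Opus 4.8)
The plan is to exploit the fact that both $\alpha$ and $\beta$ are $\Fq$-linear maps between $\Fq$-vector spaces of the same dimension, so that once well-definedness is verified it will suffice to establish injectivity. First I would record the relevant dimensions. The domain $\Fqc$ has $\Fq$-dimension five. The set $S$ is the product of $\Fq$ (one dimension) with the kernel of the nonzero $\Fq$-linear functional $\tr_{q^5/q}$, which has dimension four; hence $\dim_{\Fq}S=5$. Likewise $T$ is the kernel in $\Fq\oplus\Fqc$ of the single nonzero functional $(\theta,y)\mapsto\tr_{q^5/q}(y)+2\theta$, so $\dim_{\Fq}T=5$ as well, in every characteristic (for $q$ even the functional reduces to $(\theta,y)\mapsto\tr_{q^5/q}(y)$, still nonzero).

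Next I would verify well-definedness. For $\alpha$ the first component lies in $\Fq$ automatically, and the second has zero trace because $\tr_{q^5/q}\circ g$ is the zero map by hypothesis; thus $\alpha(x)\in S$. For $\beta$ I would use the Frobenius-invariance $\tr_{q^5/q}(x^{q^2})=\tr_{q^5/q}(x)$ to obtain $\tr_{q^5/q}(-x-x^{q^2})=-2\tr_{q^5/q}(x)$, whence $\tr_{q^5/q}(-x-x^{q^2})+2\tr_{q^5/q}(x)=0$ and therefore $\beta(x)\in T$.

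The heart of the argument is injectivity. For $\alpha$, if $g(x)=0$ then $x\in\ker g=\Fq$; for such $x$ one computes $\tr_{q^5/q}(\rho x)=x\,\tr_{q^5/q}(\rho)$, and since $\tr_{q^5/q}(\rho)\neq0$ the vanishing of the first component forces $x=0$. For $\beta$, the condition $x+x^{q^2}=0$ gives $x^{q^2}=-x$ and hence $x^{q^4}=x$, so $x\in\Fqc\cap\F_{q^4}=\Fq$; then $x^{q^2}=x$ combines with $x^{q^2}=-x$ to give $2x=0$, while $\tr_{q^5/q}(x)=5x=0$ for $x\in\Fq$. Since $\gcd(2,5)=1$, the relations $2x=0$ and $5x=0$ together yield $x=0$.

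Finally, injectivity together with the equality of dimensions established at the outset gives bijectivity of both maps. The only point requiring slight care is the injectivity of $\beta$ in even characteristic, where $2x=0$ is vacuous and one must invoke the trace relation $5x=0$ to conclude; this is the step I expect to be the main, though minor, obstacle.
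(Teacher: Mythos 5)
Your proof is correct, and it is essentially the argument the paper has in mind: the paper offers no written proof at all (it labels the proposition ``straightforward''), and the intended reasoning is exactly your dimension count plus injectivity of the two $\Fq$-linear maps. Your kernel analysis is sound throughout --- in particular the treatment of $\beta$, where $x+x^{q^2}=0$ forces $x\in\F_{q^4}\cap\F_{q^5}=\Fq$ and then the relations $2x=0$ and $\tr_{q^5/q}(x)=5x=0$ combine via $\gcd(2,5)=1$ to give $x=0$ in every characteristic --- so the proposal fills the omitted proof correctly with no gaps.
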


\begin{theorem}\label{t:fecc}
If $\rk A=\rk B=4$, then $\mathbb L$ is equivalent to $\mathbb E$ or $L_{F}$, where
\begin{align}\mathbb E&=
\{\la(\eta(x^q-x)+\tr_{q^5/q}(\rho x), x^q-x^{q^4})\ra_{\Fqc}\colon x\in\F_{q^5}^*\},\label{e:fecc1}\\ 
&\eta\neq0,\ \tr_{q^5/q}(\eta)=0\neq\tr_{q^5/q}(\rho),\notag\\
F(x)&=k(x^{q}+x^{q^3})+x^{q^2}+x^{q^4},\quad \N_{q^5/q}(k)=1.\label{e:fecc2}
\end{align}
\end{theorem}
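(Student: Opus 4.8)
The plan is to start from the explicit parametric description of $\mathbb L$ already set up: by \eqref{fgenerale} the projected linear set is $\{\la(x_2-\theta,\,x_3+\lambda^{q^2}x_4)\ra_{\Fqc}\}$, where the points $\la(x_0,\dots,x_4)_{\cB}\ra_{\Fqc}$ of $\Sigma$ are parametrized by $t=x_3$ and $\theta=x_4\in\Fq$ through \eqref{e:x2}, subject to the single linear relation \eqref{e:x3}. Using $e=0$ (which is exactly $\rk A=4$), $\mu=1$, and the expressions \eqref{e:abcdw} together with $b=-w^{1-q^3}$, I would substitute into the two coordinates and simplify. Everything then splits according to whether $\lambda=1$ or $\lambda\neq1$, these being distinguished by the two forms of the $\rk B=4$ condition \eqref{e:rkabcd}: namely $\tr_{q^5/q}(w)=0$ in the first case, and $w=\lambda-1$ with $\N_{q^5/q}(\lambda)=1$ in the second.

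The decisive simplification in each case is to eliminate the auxiliary pair $(t,\theta)$ in favour of a single variable $x\in\Fqc$ by means of Proposition~\ref{p:sostituz}. Writing $y=w^{-q^2}t$, the constraint \eqref{e:x3} becomes $\tr_{q^5/q}(y)=0$ when $\lambda=1$ (so $(\theta,y)$ ranges over $S$) and, after using $1-\lambda^{q^2}=-w^{q^2}$, becomes $\tr_{q^5/q}(y)+2\theta=0$ when $\lambda\neq1$ (so $(\theta,y)$ ranges over $T$). For $\lambda=1$ I would apply the bijection $\alpha$ of Proposition~\ref{p:sostituz} with $g(x)=x-x^{q}$ and some $\rho$ with $\tr_{q^5/q}(\rho)\neq0$, giving $y=x-x^{q}$ and $\theta=\tr_{q^5/q}(\rho x)$; for $\lambda\neq1$ I would apply $\beta$, giving $t=-w^{q^2}(x+x^{q^2})$ and $\theta=\tr_{q^5/q}(x)$. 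The point is that, as $x$ runs over $\Fqc$, the pair $(\theta,y)$ runs bijectively over the admissible parameter set, so the one-variable family so obtained describes the same $\Fq$-subspace, hence the same linear set.

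After substitution the coordinates collapse. For $\lambda=1$, using $\tr_{q^5/q}(y)=0$ to replace three of the five Frobenius terms, the first coordinate becomes $w^{q-q^2}t+t^{q^4}=w^{q}(y+y^{q^4})$, and $y+y^{q^4}=x^{q^4}-x^{q}$; a scaling by $w^{-q}$, a swap of the two coordinates, and a sign change then bring $\mathbb L$ to $\mathbb E$ with $\eta=-w^{q^2}$, whence $\eta\neq0$ and $\tr_{q^5/q}(\eta)=-\tr_{q^5/q}(w)=0$, while $\tr_{q^5/q}(\rho)\neq0$ is built into the choice of $\rho$. For $\lambda\neq1$, substituting $t=-w^{q^2}(x+x^{q^2})$, $\theta=\tr_{q^5/q}(x)$ and again using the trace relation, the first coordinate telescopes to $w^{q}x^{q^3}$ and the second to $w^{q^2}(x^{q}+x^{q^3}+x^{q^4})+\tr_{q^5/q}(x)$; reparametrizing $x=X^{q^2}$, scaling the first coordinate to $X$, and subtracting $\lambda^{q^2}X$ from the second (a shear in $\GL(2,q^5)$) yields $L_F$ with $F(X)=\lambda^{q^2}(X^{q}+X^{q^3})+X^{q^2}+X^{q^4}$, so $k=\lambda^{q^2}$ and $\N_{q^5/q}(k)=\N_{q^5/q}(\lambda)^{q^2}=1$.

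The main obstacle is the bookkeeping in these two collapses: one must repeatedly reduce Frobenius exponents modulo $q^5-1$, track the several projectivities (scalings, a coordinate swap, and the shear in $\GL(2,q^5)$) and the reparametrization $x\mapsto X^{q^2}$, and check that the telescoping producing $w^{q}x^{q^3}$ and the clean polynomial $F$ is exact. A secondary point requiring care is uniformity in the characteristic: when $q$ is even the factor $2$ in the constraint degenerates and $T$ collapses to $S$, but since Proposition~\ref{p:sostituz} asserts that $\beta$ is bijective in every characteristic and none of the simplifications divides by $2$, the same computation goes through unchanged.
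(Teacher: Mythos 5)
Your proposal is correct and follows essentially the same route as the paper's proof: the same case split $\lambda=1$ versus $\lambda\neq1$ driven by \eqref{e:rkabcd}, the same elimination of the pair $(\theta,y)$ via the bijections $\alpha$ and $\beta$ of Proposition~\ref{p:sostituz}, and the same final projectivities, arriving at $k=\lambda^{q^2}$ exactly as in the paper. The only differences are immaterial: your choice $g(x)=x-x^{q}$ yields $\eta=-w^{q^2}$ rather than $w^{q^2}$ (both satisfy $\tr_{q^5/q}(\eta)=0$, $\eta\neq0$), and in the $\lambda\neq1$ case you substitute $\beta$ before applying the $\GL(2,q^5)$ transformation, whereas the paper applies its $2\times2$ matrix in the $(y,\theta)$ parametrization first and substitutes afterwards.
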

\begin{proof}
Let $\eta=w^{q^2}$ and $y=tw^{-q^2}$.
Note that \eqref{e:x3} is equivalent to $\tr_{q^5/q}(y)=0$ for $\lambda=1$,
and to
\begin{equation}\label{e:trteta}
\tr_{q^5/q}(y)+2\theta=0
\end{equation}
for $\lambda\neq1$.
By combining \eqref{fgenerale}, \eqref{e:x2}, the elements of $\mathbb L$ are of type
\[
\la(\eta y+\lambda^{q^2}\theta,-\eta^{q^4}(y^q+y^{q^2}+y^{q^3})+(1-\lambda^q)\theta)\ra_{\Fqc}.
\]
For $\lambda=1$, it follows
\begin{equation}
\{\la(\eta y+\theta,y+y^{q^4})\ra_{\Fqc}\colon \theta\in\Fq,\,y\in\Fqc,\,\tr_{q^5/q}(y)=0\}.
\label{e:lambdae1}
\end{equation}
The form \eqref{e:fecc1} can be obtained by the substitution $(\theta,y)=(\tr_{q^5/q}(\rho x),x^q-x)$ in \eqref{e:lambdae1} according to Proposition~\ref{p:sostituz}.

For $\lambda\neq1$, by \eqref{e:trteta}, the pairs are of type
\[
(\eta y+(\eta+1)\theta,\eta^{q^4}(y+y^{q^4}+\theta)).
\]

Now let us transform the pairs as follows:
\[  
\begin{pmatrix}0&\eta^{-q^4}\\ 1&-\eta^{-q^4+1}-\eta^{-q^4}\end{pmatrix}
\begin{pmatrix}
    \eta y+(\eta+1)\theta\\ \eta^{q^4}(y+y^{q^4}+\theta)
\end{pmatrix}
=
\begin{pmatrix}
    y+y^{q^4}+\theta\\ -y-ky^{q^4}
\end{pmatrix}
\]
for $k=1+\eta=\lambda^{q^2}$.
By substituting $(\theta,y)=\beta(x)$ we get
$(x^{q^3},x+x^{q^2}+k(x^{q^4}+x^q)$, equivalent to $L_F$.

The norm of $k$ is one as it has been noted as a consequence of \eqref{e:rkabcd}.
\end{proof}

\begin{proposition}\label{p:fecc}
If $\tr_{q^5/q}(\eta^{-1})\neq0$, then the linear set $\mathbb E$ in \eqref{e:fecc1} is projectively equivalent to $L_{F_1}$ where
\begin{equation}\label{e:fecc3}
F_1(x)=\tr_{q^5/q}(\eta^{-1})x^{q^4}-(\eta^{-q^4}+\eta^{-1})\tr_{q^5/q}(x).
\end{equation}
\end{proposition}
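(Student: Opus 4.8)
The plan is to go back to the intermediate description of $\mathbb E$ used in the proof of Theorem~\ref{t:fecc}. There the case $\lambda=1$ produced, before the substitution $(\theta,y)=(\tr_{q^5/q}(\rho x),x^q-x)$, the representation \eqref{e:lambdae1}, namely $\mathbb E=L_U$ with $U=\{(\eta y+\theta,\,y+y^{q^4})\colon \theta\in\Fq,\ \tr_{q^5/q}(y)=0\}$. Since by Proposition~\ref{p:sostituz} that substitution is a bijection onto the parameter set $S$, $\mathbb E$ and this $U$ define the very same linear set, so I would work with $U$: this avoids carrying the term $\tr_{q^5/q}(\rho x)$ and makes the role of the hypothesis transparent.

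First I would observe that the hypothesis $\tr_{q^5/q}(\eta^{-1})\neq0$ is exactly the condition under which the first-coordinate map $(\theta,y)\mapsto \eta y+\theta$ is a bijection from the $5$-dimensional parameter space $\{(\theta,y)\colon\theta\in\Fq,\ \tr_{q^5/q}(y)=0\}$ onto $\Fqc$: if $\eta y+\theta=0$ with $\theta\in\Fq$ and $\tr_{q^5/q}(y)=0$, then $y=-\eta^{-1}\theta$, whence $0=\tr_{q^5/q}(y)=-\theta\,\tr_{q^5/q}(\eta^{-1})$ forces $\theta=0$ and $y=0$. Injectivity together with the dimension count gives bijectivity, so $U$ is the graph of an $\Fq$-linear map, i.e.\ $\mathbb E=L_{F_1'}$ for some $q$-polynomial $F_1'$.

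Next I would make this explicit. Reparametrizing by $Y=\eta Z$ and solving $\eta y+\theta=\eta Z$ under $\tr_{q^5/q}(y)=0$ yields $\theta=\tr_{q^5/q}(Z)/\tr_{q^5/q}(\eta^{-1})$ and $y=Z-\eta^{-1}\theta$; substituting into the second coordinate $y+y^{q^4}$ and using $\theta\in\Fq$ (so $\theta^{q^4}=\theta$) gives a pair of the form $(\eta Z,\,G(Z))$ with $G(Z)=Z+Z^{q^4}-(\eta^{-1}+\eta^{-q^4})\tr_{q^5/q}(Z)/\tr_{q^5/q}(\eta^{-1})$. Setting $T=\tr_{q^5/q}(\eta^{-1})\in\Fq^*$, a direct check shows $T\,G(Z)=F_1(Z)+TZ$ with $F_1$ as in \eqref{e:fecc3}. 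Applying the element $N=\left(\begin{smallmatrix}\eta^{-1}&0\\ -T\eta^{-1}&T\end{smallmatrix}\right)\in\GL(2,q^5)$ (of determinant $T\eta^{-1}\neq0$) to the pair $(\eta Z,G(Z))$ produces $(Z,\,T\,G(Z)-TZ)=(Z,F_1(Z))$, so $\mathbb E\cong L_{F_1}$.

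The computations are all routine once the right starting form is chosen; the only genuine point is recognizing that $\tr_{q^5/q}(\eta^{-1})\neq0$ is precisely what makes the first coordinate surjective, so that $\mathbb E$ can be put into graph form $L_f$ at all. The main (and only minor) obstacle is bookkeeping: keeping track of which quantities lie in $\Fq$ — so that $q$-th powers fix them and the trace is $\Fq$-linear in them — when solving for $\theta$ and simplifying $G$, and confirming that the single matrix $N$ realizes the passage to the normalized polynomial $F_1$.
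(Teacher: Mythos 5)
Your proof is correct, and it rests on the same underlying idea as the paper's: both start from the intermediate representation \eqref{e:lambdae1} of $\mathbb E$ and exploit $\tr_{q^5/q}(\eta^{-1})\neq0$ to invert the first-coordinate parametrization $(\theta,y)\mapsto\eta y+\theta$. The executions differ, though. The paper exhibits the inverse rather than solving for it: it crafts the auxiliary map $g(x)=\eta^{-1}\left(\tr_{q^5/q}(\eta^{-1})x-\tr_{q^5/q}(\eta^{-1}x)\right)$, verifies that $g$ satisfies the hypotheses of Proposition~\ref{p:sostituz} (which needs a small rank argument to conclude $\ker g=\Fq$), substitutes $(\theta,y)=(\tr_{q^5/q}(\eta^{-1}x),g(x))$ so that the first coordinate becomes $\tr_{q^5/q}(\eta^{-1})\,x$, and leaves the final projective normalization (``neglecting the first-degree term'' and rescaling) implicit. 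You instead prove bijectivity of the first-coordinate map directly (kernel computation plus dimension count, which makes transparent that the hypothesis is precisely the condition for $\mathbb E$ to be a graph $L_f$ at all), solve for $(\theta,y)$ in terms of the first coordinate, and carry out the normalization with one explicit matrix $N\in\GL(2,q^5)$. The two computations in fact coincide: with $Z=\tr_{q^5/q}(\eta^{-1})\eta^{-1}x$, your $y=Z-\eta^{-1}\tr_{q^5/q}(Z)/\tr_{q^5/q}(\eta^{-1})$ is exactly the paper's $g(x)$. What your route buys is the avoidance of a second appeal to Proposition~\ref{p:sostituz} and of the rank argument, together with a fully explicit projectivity realizing the equivalence; what the paper's route buys is reuse of the substitution machinery already set up for Theorem~\ref{t:fecc}.
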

\begin{proof}
Let $\rho=\eta^{-1}$, and
\[
g(x)=\rho\left(\tr_{q^5/q}(\rho) x-\tr_{q^5/q}(\rho x)\right).
\]
Clearly $\Fq\subseteq\ker g(x)$, and $\tr_{q^5/q}(g(x))=0$ for all $x\in\Fqc$.
Taking into account \eqref{e:lambdae1},
for $\theta=\tr_{q^5/q}(\rho x)$ and $y=g(x)$ we have
\[
\eta y+\theta=\tr_{q^5/q}(\rho) x.
\]
In particular, the sum of $\eta g(x)$ with an $\Fq$-linear map of rank one is bijective, and this implies that the rank of $g(x)$ is at least four.
Therefore, $\ker g=\Fq$ and the hypotheses of Proposition~\ref{p:sostituz} are satisfied.
The substitution $y=g(x)$ in $y+y^4$, neglecting the first-degree term, gives 
\[
\left(\tr_{q^5/q}(\rho) x,\eta^{-q^4}\tr_{q^5/q}(\eta^{-1})x^{q^4}-(\eta^{-q^4}+\eta^{-1})\tr_{q^5/q}(\eta^{-1}x)\right)
\]
and leads to \eqref{e:fecc3}.
\end{proof}

\begin{remark}\label{r:gap2}
The linear sets of type \eqref{e:fecc1} and \eqref{e:fecc2} have been analyzed by a GAP script for $q\le25$.
None of them is scattered.
The script that performed this check can be found at the URL \href{https://pastebin.com/TjrKuu0z}{https://pastebin.com/TjrKuu0z}.\\
In particular, for $q \leq 9$ and $\mathrm{N}_{q^5/q}(k)= 1 \neq k$, the linear set $L_F$ has points with weight at most 2, i.e., each ratio $F(x)/x$ occurs for at most $q^2-1$ distinct non-zero values of $x \in \F_{q^5}^*.$ On the other hand, for $k=1$, $L_F$ is equivalent to the linear set associated with the trace function $\mathrm{Tr}_{q^5/q}(x)$, see \cite{CMP}.
\end{remark}

\section{Conclusion}

In this paper, starting from the results in \cite{MZ2019}, we proved that the maximum scattered linear sets of $\PG(1,q^5)$ are always of LP type if the rank of $A$ or $B$ is five. The case in which $\rk A =\rk B =4$ remains open. In this case, we can describe their form and/or the polynomial that defines them. Any linear set with this shape would be a new type. We have not established their existence and conjecture that they do not exist; in any case, we have ruled out their existence for $q\le 25$. The following theorem summarizes our work.
To provide the reader with more insight, we extend the proof to include a description of the steps that led to the result.
\begin{theorem}\label{t:summary}
If $\mathbb L$ is a maximum scattered linear set in $\PG(1,q^5)$ not of pseudoregulus type, then $\mathbb L$ is the projection of a canonical $\Fq$-subgeometry $\Sigma\subseteq\PG(4,q^5)$ from a plane $\Gamma$ such that $\Gamma\cap\Sigma=\emptyset$.
Let  $\sigma$ be a generator of the subgroup $\mathbb G$ of $\PGaL(5,q^5)$ fixing $\Sigma$ pointwise.
Define $A=\Gamma\cap\Gamma^{\sigma^4}$ and $B=\Gamma\cap\Gamma^{\sigma^3}$. Then, $A$ and $B$ are points.
If $\rk A=5$ or $\rk B=5$, then $\mathbb L$ is of LP type.
Otherwise $q>25$, $\mathbb L$ is not of LP type and is, up to equivalence, of shape \eqref{e:fecc1} or $L_F$ where $F(x)$ is as in \eqref{e:fecc2}.
\end{theorem}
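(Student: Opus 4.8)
The plan is to assemble the statement from the already-established results, organising the argument by the possible values of $\rk A$ and $\rk B$. First I would record that, by Proposition~\ref{linearlyindepedent}, any four of $A,A_1,\ldots,A_4$ (respectively $B,\ldots,B_4$) are independent, so $\rk A\ge4$ and $\rk B\ge4$; hence the only cases to treat are $\max\{\rk A,\rk B\}=5$ and $\rk A=\rk B=4$. This dichotomy is exactly the one along which the preceding sections were organised, so the summary reduces to invoking those sections in the correct logical order.

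For the case $\max\{\rk A,\rk B\}=5$, I would first reduce to $\rk A=5$: as observed at the start of Section~\ref{s:class}, replacing $\sigma$ by a suitable power $\tau\in\mathbb G$ turns whichever of $A,B$ has rank five into $\Gamma\cap\Gamma^{\tau^4}$, so there is no loss in assuming $\rk A=5$ for the chosen generator. Then I would argue by contradiction: suppose $\mathbb L$ is not of LP type. Since by hypothesis $\mathbb L$ is not of pseudoregulus type either, Theorem~\ref{t:almost} applies and yields, up to projectivities, the explicit form \eqref{e:formak} together with the conditions $\delta^2\varepsilon\neq1$, $\delta^3\varepsilon^2+(1-3\delta)\varepsilon+1\neq0$, and the non-existence of any $x\in\Fqc$ solving the system \eqref{e:sctness}. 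But Theorem~\ref{t:conj} asserts precisely that, under $\delta^2\varepsilon\neq1$ and $\delta^3\varepsilon^2+(1-3\delta)\varepsilon+1\neq0$, such an $x$ does exist, a contradiction. Therefore $\mathbb L$ must be of LP type, which proves the first assertion.

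For the case $\rk A=\rk B=4$, I would first rule out the LP type directly: by Theorems~\ref{t:ZZ} and \ref{LPlines} (summarised in item $(iv)$ of the introduction), if $\mathbb L$ were of LP type then one of $A,B$ would have rank five, contradicting $\rk A=\rk B=4$. Next, Theorem~\ref{t:fecc} shows that, up to equivalence, $\mathbb L$ is one of the two explicit forms \eqref{e:fecc1} or \eqref{e:fecc2}. Finally, since $\mathbb L$ is by assumption scattered, while Remark~\ref{r:gap2} records that for $q\le25$ neither \eqref{e:fecc1} nor \eqref{e:fecc2} is scattered, I would conclude $q>25$. (Theorem~\ref{t:qpiccolo} already forces $q>5$; the computational Remark~\ref{r:gap2} is what sharpens this to $q>25$.)

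The substantive difficulty of the whole program lies in the inputs rather than in this final synthesis: the curve-theoretic argument behind Theorem~\ref{t:conj} and the algebraic normalisation behind Theorem~\ref{t:fecc} are where the real work happens. For the summary itself, the only points demanding care are the generator-swap that legitimises the reduction to $\rk A=5$ (so that Theorem~\ref{t:almost}, which is stated in terms of $\Gamma\cap\Gamma^{\sigma^4}$, may be invoked for whichever of $A,B$ has rank five), and the clean application of the LP-characterisation to exclude the LP type when $\rk A=\rk B=4$. Both are bookkeeping steps, so I expect no genuine obstacle in this proof beyond citing the earlier results in the correct order and checking that their hypotheses are met.
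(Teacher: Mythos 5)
Your proposal is correct and takes essentially the same approach as the paper: the paper's proof of Theorem~\ref{t:summary} consists precisely of assembling Theorems~\ref{t:qpiccolo}, \ref{t:almost}, \ref{t:conj}, \ref{paperino} (together with Theorem~\ref{t:fecc}) and Remark~\ref{r:gap2}, split along the same dichotomy $\max\{\rk A,\rk B\}=5$ versus $\rk A=\rk B=4$, with the same generator-swap reduction to $\rk A=5$ and the same contradiction between Theorems~\ref{t:almost} and \ref{t:conj}. Your write-up simply makes explicit the case analysis that the paper compresses into a single citing sentence.
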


\noindent \emph{Proof and summary of the paper.}\quad Let $\mathbb L$ be a maximum scattered linear set of $\PG(1,q^5)$, not of pseudoregulus type. 
By Theorems~\ref{t:lupo} and \ref{t:CsZa20162}, $\mathbb L$ is the projection of a canonical $\Fq$-subgeometry $\Sigma$ of $\PG(4,q^5)$ from a plane $\Gamma$. The group $\mathbb G$ of collineations fixing $\Sigma$ pointwise is a cyclic group of order five. 
By Theorem~\ref{t:CsZa20162}, the intersection $\Gamma\cap\Gamma^\sigma$ is a point of $\PG(4,q^5)$ for every generator $\sigma$ of $\mathbb G$. 
This allows us to define two points $A=\Gamma\cap\Gamma^{\sigma^4}$ and $B=\Gamma\cap\Gamma^{\sigma^3}$ and the elements of their orbits $A_i=A^{\sigma^i}$, $B_i=B^{\sigma^i}$, $i=1,2,3,4$.

The point $A=A_{\sigma,\Gamma}$ depends on the vertex $\Gamma$ and the generator $\sigma$ of the collineations group $\mathbb G$, and the proven properties hold for every $\sigma$. 
At the beginning of Section~\ref{s:class}, we show that if $\rk B=5$, then by replacing $\sigma$ with another generator $\tau$ if necessary, we have $\rk A_{\tau,\Gamma}=5$. For this reason, if $\max\{\rk A,\rk B\}=5$, it may be assumed without loss of generality that $\rk A=5$.

Various properties of these points have been studied in Section~\ref{s:geomprop}. 
Overall, the ten points of the orbits of $A$ and $B$ under the action of $\mathbb G$ are distinct, and precisely four of them belong to every 
$\Gamma^{\sigma^i}$. 
The linear set $\mathbb L$ is of type LP if, and only if, the line $A_2A_4$ intersects $\Gamma$ (configuration I) or the line $B_3B_4$ intersects $\Gamma$ (configuration II), cf.\ Theorem~\ref{LPlines}. 
In Section~\ref{s:equations}, we studied the properties of the vectors representing the above points, linked by equations \eqref{linearcombination} and \eqref{viui}. 
We then found properties of $\mathbb L$, expressed in terms of vector coordinates of a basis associated with points $A$, $A_1$, $A_2$, $A_3$, $B_4$. 
Using these coordinates, equations \eqref{e:23c} allow us to find coordinates of point $A_4$ in the case $\N_{q^5/q}(\mu) \neq 1$, which leads to a geometric property of fundamental importance in this work: The line $A_3B_1$ intersects $\Gamma$. 
As a consequence of this,  in Proposition~\ref{p:almost} we have shown that if $\mathbb L$ is not of type LP and $\rk A=5$, then, up to projectivities,
\[
 \mathbb L=\{\la(x-k^{-1}x^{q^{2s}},x^{q^s}-\delta k^{q^{4s}+q^{2s}}x^{q^{2s}})\ra_{\Fqc}\colon
  x\in\Fqc^*\}
\]
for some $k\in\Fqc^*$ and $\delta\in\Fq$. 
In Theorem~\ref{t:almost}, we saw that, for $\varepsilon=\N_{q^5/q}(k)$, it follows that 
\begin{equation}\label{e:2diseq}
\begin{cases}
\delta^2\varepsilon\neq1\\
\delta^3\varepsilon^2+(1-3\delta)\varepsilon+1\neq0,
\end{cases}
\end{equation}
and the system of equations
 \[
\begin{cases}
  \varepsilon\N_{q^5/q}(x)=-1\\
  \delta^2\varepsilon x^{q^s}-\delta\varepsilon x^{q^{2s}+q^s+1}(1-x)^{q^{3s}}+(1-x)^{q^s+1}
  =0
\end{cases}\qquad\qquad\text{\eqref{e:sctness}}
\]
have no solution $x\in\Fqc$. 
However, Theorem~\ref{t:conj} shows that conditions \eqref{e:2diseq} imply that the system of equations \eqref{e:sctness} admit a solution. 
The result is achieved by translating the existence of solutions into the existence of $\Fq$-rational points on an algebraic curve.
Therefore, if $\rk A=5$, then $\mathbb L$ is of type LP. 
For the reasons given above, $\mathbb L$ is of type LP also if $\rk B=5$.

By virtue of Theorem~\ref{t:fecc}, if $\rk A=\rk B=4$, then $\mathbb L$ is equivalent to the linear set described in \eqref{e:fecc1}, or to $L_F$ where $F(x)$ is the polynomial \eqref{e:fecc2}. 
The linear sets of those two types were analyzed using a GAP script for $q\le25$ and, as mentioned in Remark~\ref{r:gap2},  none of them are scattered.\hfill$\square$

\medskip

As a consequence, we have the following result.
\begin{corollary}
Any maximum scattered linear set (MSLS) in $\PG(1,q^5)$ is,  up to equivalence in $\PGaL(2,q^5)$, one of the following:
\begin{enumerate}[(C1)]
\item a MSLS of pseudoregulus type, $\{\la(x,x^q)\ra_{\Fqc}\colon x\in \F_{q^5}^*\}$,
\item a MSLS of LP type, $\{\la(x,x^{q^s}+\delta x^{q^{5-s}})\ra_{\Fqc}\colon x\in \F_{q^5}^*\}$, $s\in\{1,2\}$, $\N_{q^5/q}(\delta)\neq0,1$,
\item 
\begin{align*}&
\{\la(\eta(x^q-x)+\tr_{q^5/q}(\rho x), x^q-x^{q^4})\ra_{\Fqc}\colon x\in\F_{q^5}^*\},\notag\\ 
&\quad\eta\neq0,\ \tr_{q^5/q}(\eta)=0\neq\tr_{q^5/q}(\rho),\label{e:c3}
\end{align*}
\item 
\[
\{\la(x,k(x^{q}+x^{q^3})+x^{q^2}+x^{q^4})\ra_{\Fqc}\colon x\in\F_{q^5}^*\},\quad \N_{q^5/q}(k)=1.
\]
\end{enumerate}
\end{corollary}
Note that the classes of sets of types (C3) and (C4) might be empty, as they actually are for $q\le25$.

\section*{Acknowledgement}
The first author acknowledges the support by the Irish Research Council, grant n. GOIPD/2022/307.
The second author is supported by the Italian National Group for Algebraic
and Geometric Structures and their Applications (GNSAGA - INdAM) and by the European Union under the Italian National Recovery and Resilience Plan (NRRP) of NextGenerationEU, with particular reference to the partnership on "Telecommunications of the Future" (PE00000001 - program "RESTART", CUP: D93C22000910001).

\thebibliography{99}

\bibitem{BaGiMaPo2018}
D. Bartoli, M. Giulietti, G. Marino, O. Polverino: 
Maximum scattered linear sets and complete caps in Galois spaces, Combinatorica
38(2) (2018), 255--278.

\bibitem{BlLa2000}
A. Blokhuis, M. Lavrauw: Scattered spaces with respect to a spread in $\PG(n, q)$, 
Geom.\ Dedicata 81(1-3) (2000), 231--243.

\bibitem{BonPol}
{G. Bonoli, O. Polverino:}
 $\Fq$-linear blocking sets in $\PG(2, q^4)$,
{Innov. Incidence Geom.} {2} (2005).

\bibitem{CMP}
B. Csajb\'ok, G. Marino, O. Polverino:
Classes and equivalence of linear sets in $\mathrm{PG}(1,q^n)$,
J. Combin.\ Theory Ser.\ A 157 (2018), 402--426.

\bibitem{CsMaPoZu2017}
B. Csajbók, G. Marino, O. Polverino, F. Zullo: 
Maximum scattered linear sets and MRD-codes, J. Algebraic Combin.\ 46 (2017),
1--15.

\bibitem{CsZa20161}
B. Csajb\'ok, C. Zanella: On the equivalence of linear sets, 
Des.\ Codes Cryptogr.\ 81 (2016), 269--281.

\bibitem{CsZa20162}
{B. Csajb\'ok, C. Zanella:}
On scattered linear sets of pseudoregulus type in $\PG(1,q^t)$,
{Finite Fields Appl.} {41} (2016), 34--54.

\bibitem{CsZa2018}
B. Csajb\'ok, C. Zanella: Maximum scattered $\Fq$-linear sets of $\PG(1,q^4)$, 
Discrete Math.\ 341 (2018), 74-80. 

\bibitem{DBVV2022}
M. De Boeck, G. Van de Voorde:
The weight distributions of linear sets in $\PG(1,q^5)$,
Finite Fields Appl.\ 82 (2022), 102034.

\bibitem{inf5}
A. Giannoni, G.G. Grimaldi, G. Longobardi, M. Timpanella: Generalizing two families of scattered quadrinomials in $\mathbb{F}_{q^{2t}}[X]$, manuscript (2025).

\bibitem{Giovultimo}
G.G. Grimaldi, G. Longobardi, S. Gupta, R. Trombetti: A geometric characterization of known maximum scattered linear sets of $\mathrm{PG}(1, q^n)$, 2024. \href{https://arxiv.org/abs/2405.01374}{arxiv.org/abs/2405.01374}

\bibitem{HirschKorTor}
J.W.P. Hirschfeld, G. Korchmáros,
F. Torres: Algebraic Curves over a Finite Field, Princeton Series in Applied Mathematics 2008, Princeton University Press.

\bibitem{LaVan}
M.~Lavrauw, G.~Van de Voorde: On linear sets on a projective line, Des. Codes Cryptogr. 56  (2010),
89--104.

\bibitem{LavrauwVanderVoorde} 
M.~Lavrauw, G.~Van de Voorde: Field reduction and linear sets in finite geometry, in: Gohar
Kyureghyan, Gary L. Mullen, Alexander Pott (Eds.), Topics in Finite Fields, Contemp. Math., AMS  (2015).

\bibitem{LidlNeid}
R. Lidl, H. Niederreiter: Finite Fields, vol. 20, Cambridge University Press, 1997

\bibitem{long} G. Longobardi: Scattered polynomials: an overview on their properties, connections and applications, ADAM (2025).

\bibitem{inf1}
G. Longobardi, G. Marino, R. Trombetti, Y. Zhou: A large family
of maximum scattered linear sets of $\PG(1, q^n )$ and their associated MRD
codes, Combinatorica 43 (2023): 681--716.

\bibitem{inf2}
G. Longobardi, C. Zanella: Linear sets and MRD-codes arising from
a class of scattered linearized polynomials, J. Algebraic Combin.\ (2021),
639--661.

\bibitem{LuPo2001}
G. Lunardon, O. Polverino: Blocking sets and derivable partial spreads, 
J. Algebr.\ Comb.\ 14(1) (2001), 49--56.

\bibitem{LuPo2004}
G. Lunardon, O. Polverino: Translation ovoids of orthogonal polar spaces, Forum Math.\ 
16 (2004), 663--669.

\bibitem{SheekeyMcGuire}
G. McGuire, J. Sheekey:
A characterization of the number of roots of linearized and projective polynomials in the field of coefficients,
Finite Fields and Their Appl.\ 57(2019), 68--91.

\bibitem{MZ2019}
{M. Montanucci, C. Zanella:}
A class of linear sets in $\PG(1, q^5 )$, 
Finite Fields and Their Applications 78 (2022), 101983. 

\bibitem{inf3}
A. Neri, P. Santonastaso, F. Zullo: Extending two families of maximum rank distance 
codes, Finite Fields Appl.\ 81 (2022), 102045.

\bibitem{payne_complete_1971}
S.~E. Payne: A complete determination of translation ovoids in finite Desarguian planes, Lincei Sci. Fis. Nat. 51 (1971), 328--331.

\bibitem{Po2010} O. Polverino:
Linear sets in finite projective spaces, Discrete Math.\ 310 (2010), 3096--3107.

\bibitem{PoZu2020} O. Polverino, F. Zullo:
Connections between scattered linear sets and MRD-codes,
Bull.\ Inst.\ Combin.\ Appl.\ 89 (2020), 46--74.

\bibitem{Sh2016}
J. Sheekey: A new family of linear maximum rank distance codes, Adv.\ Math.\ Commun.\ 
10(3) (2016), 475--488.

\bibitem{inf4}
V. Smaldore, C. Zanella, F. Zullo: New scattered linearized quadrinomials. Linear Algebra Appl.\ 702 (2024), 143--160.

\bibitem{Z}
{C. Zanella:} 
A condition for scattered linearized polynomials involving Dickson matrices,
{J. Geom.} {110}, 50 (2019).

\bibitem{ZZ}
{C. Zanella, F. Zullo:} 
Vertex properties of maximum scattered linear sets of PG$(1,q^n)$. 
{Discrete Math.} 343 (2020) 111800.

\vspace{1cm}
\noindent
Stefano Lia,\\
Department of Mathematics and Mathematical Statistics,\\
Umeå University,\\
\texttt{stefano.lia@umu.se}

\vspace{0.5cm}

Giovanni Longobardi\\
Dipartimento di Matematica ed Applicazioni 'R. Caccioppoli',\\
Università degli Studi di Napoli Federico II,\\
Via Cintia, Monte S.Angelo I-80126 Napoli, Italy\\
\texttt{giovanni.longobardi@unina.it}

\vspace{0.5cm}

\noindent
Corrado Zanella,\\
Dipartimento di Tecnica e Gestione dei Sistemi Industriali,\\
Universit\`a di Padova\\
\texttt{corrado.zanella@unipd.it}

\end{document}